\newcounter{kirshr}
\newcounter{kirsha}
\newcounter{kirshb}
\newenvironment{enumroman}{\setcounter{kirshr}{1}
\begin{list}{(\roman{kirshr})}{\usecounter{kirshr}} }{\end{list}}
\newenvironment{enumarab}{\setcounter{kirshb}{1}
\begin{list}{(\arabic{kirshb})}{\usecounter{kirshb}} }{\end{list}}
\newtheorem{theorem}{Theorem} \newtheorem{corollary}[theorem]{Corollary} \newtheorem{lemma}[theorem]{Lemma}
\newtheorem{conjecture}[theorem]{Conjecture} \newtheorem{proposition}[theorem]{Proposition}
 \newtheorem{definition}{Definition}
 \newtheorem{example}{Example}
\newtheorem{examples}[example]{Examples}  
\newcommand{\R}{\mathcal{R}}
\author{Tarek Sayed Ahmed}
\title{Hilbert's tenth problem, G\"odel's incompleteness, Halting problem, a unifying perspective}
 \renewcommand{\P}{\mathcal{P}} \newcommand{\nat}{\mathbb{N}}
\newcommand{\proj}{\pi^n_S}
\begin{document}
\maketitle
\begin{abstract}
We formulate a property $P$ on a class of relations on the natural
numbers, and formulate a general theorem on $P$, from which we get
as corollaries the insolvability of Hilbert's tenth problem,
G\"odel's incompleteness theorem, and Turing's halting problem. By slightly 
strengthening the property $P$, we get Tarski's definability
theorem, namely that truth is not first order definable. The
property $P$ together with a ``Cantor's diagonalization" process
emphasizes that all the above theorems are a variation on a theme,
that of self reference and diagonalization combined. We relate our results to self referential paradoxes, including a formalisation of the Liar paradox, 
and fixed point theorems.
We also discuss
the property $P$ for arbitrary rings. We give a survey on Hilbert's tenth problem for quadratic rings and for the rationals pointing the
way to ongoing research in main stream mathematics involving recursion theory, definability in model theory, algebraic geometry 
and number theory.
\end{abstract}
\section{Introduction}

In this paper we give a new unified proof to four important
theorems, solved in the period between 1930 and  1970. These problems
in historical order are G\"odel's incompleteness theorem,  Tarski's
definability theorem, Turing's halting problem \cite{3} and Hilbert's tenth
problem \cite{7}, \cite{Martin}. The latter was solved 70 years after it was posed by
Hilbert in 1900. Martin Davis, under the supervision of Emil Post,
was the first to consider the possibility of a negative solution to
this problem which ``begs for insolvability" as stated by Post. Later,
Julia Robinson, Martin Davis, Hilary Putnam worked on the problem for
almost twenty years, and the problem was reduced to showing that the
exponential function is Diophantine. This latter result was proved
by Matiyasevich in 1970.

The technique of proof of G\"odels incompletenes theorem, the Halting problem and Hilbert's tenth are quite similar.
In all cases we use G\"odel numbering to code meta statements as statements. So statements have a double role.
They are either theorems of the system, but subtly they can be viewed as meta statements, that is theorems about theorems of the system.
This line of thinking can be extended to obtain a theorem that refers to itself. 
We can code programs (for Halting problem), formulas (for incompleteness) and polynomials (for Hilbert's tenth problem).
In the Halting problem we use the notion of univeral Turing machine,
and for completeness we use the notion
of universal recursively enumerable set, and for Hilbert's tenth we use a universal Diaphantine set.
In all cases a universal set is defined from  which (by fixing one of the parameters) we obtain all the required sets
(for example we obtain all Diophantine functions or all listable\footnote{We may use the word listable instead of recursively enumerable}  subsets).
Then  we diagonalize out, so to speak. The ideas involve also strings that refers to strings,
formulas that refer to formulus possibly itself,
or programs that acts on numbers of other programs possibly the number of itself.
The idea of a string applied to its own G\"odel number, comes very much across in G\"odel's incompletenes theorem and the
Halting problem. The two ideas involved are diagonalization and self-reference, which can be reflected metamathematically by fixed point theorems. 
The diagonalization approach is best explained and exemplefied in \cite{y} and the self reference one in \cite{s}. 
The context of \cite{y} is category theory in disguise, while \cite{s} deals with abstract formal systems that are amenable to the Liar paradox. 
This paper attempts, on the one hand, to unify the two approaches,
and on the other, it stresses the fact that they are in a sense distinct.  In the famous Pulitzer prize winner book 
``G\"odel, Escher Bach" \cite{Ho}, a formal system
$TNT$ is used for arithmetic.  We quote:

``This chapter's title is an adaptation of the title of G\"odel's famous 1931 paper $=``TNT"$ being substituted for "Principia Mathematica".
G\"odel's paper was a technical one, concentarting on making his proof watertight and rigorous; this chapter will be more intuitive
and in it I will streess the two ideas that are in the heart of the proof.
The first key idea is the deep discovery that there are strings of $TNT$ which can be interpreted as speaking about other strings of $TNT$: in short: 
that $TNT$ as a language is capable of introspection or self scrutiny.
This is what comes from Godel numbering. The second idea is that the property of self scrutiny can be entirely concentrarted in a single 
string thus the string's sole  focus of attention is itself . The focusing trick is traceable to Cantor's diagonal method.

In my opinion if one is interested in understanding G\"odel's proof in a deep way then one must recognize that the proof, in essence, 
consists of a fusion of these two ideas.
Each one of them is a master stroke; to put them together is an act of genuis. If I were to choose, 
however, which of the two ideas is deeper I would unhesitatingly choose the first one, the idea of G\"odel numbering, 
for that notion is related to the whole notion of what meaning and reference are, in symbol manipulating systems. 
This is an idea that goes far beyond the confines of mathematical logic, wheras the Cantor 
trick, rich enough in its mathematical consequences, has little if any relation to issues in real life".

In what follows we give a unifying perspective of this phenomenon.
We formulate a property $P$ on a class of relations on the natural
numbers. We formulate a general theorem on $P$,
from which we get as corollaries the insolvability of the four
famous problems stated above. The property $P$ together with a
``Cantor's diagonalization" process emphasizes that all the above
theorems are a variation on a theme, that of self reference and
diagonalization combined. However, we argue that those two key concepts in G\"odels argument are in a sense independent from each other. 
We also discuss the property $P$ for
arbitrary rings; and an overview of Hilbert's tenth problem for other rings (like quadratic rings and the rationals), 
with emphasis on recent research, is presented.

The reader is assumed to be familiar with the known notions of recursive, recursively enumerable, Turing machines, and universal Turing machines.
A good reference is \cite{3}. Diophantine sets are defined in \cite{Martin}, and it is proved therein that recursively enumerable relations coincide
with Diaphantine sets. An excellent exposition of the proof of Hilbert's tenth problem is \cite{7}. 

No doubt that there is something about self referential paradoxes 
that appeals to all and sundry. 
And, there is also a certain air of mystery associated with them, but when people talk about such paradoxes in a non-technical 
fashion indiscriminately, especially when dealing with G\"odel's incompleteness theorem, then quite often it gets annoying!
Lawvere in {\it Diagonal Arguments and Cartesian Closed Categories} \cite{l} 
sought, among several things, to demystify the incompleteness theorem. 
In a self-commentary on the above paper, he actually has quite a few harsh words, in a manner of speaking.
``The original aim of this article was to demystify the incompleteness theorem of G\"odel and the truth-definition theory of 
Tarski by showing that both are consequences of some very simple algebra in the cartesian-closed setting. 
It was always hard for many to comprehend how Cantor's mathematical theorem could be re-christened as a paradox by Russell 
and how G\"odel's theorem could be so often declared to be the most significant result of the 20th century. 
There was always the suspicion among scientists that such extra-mathematical publicity movements 
concealed an agenda for re-establishing belief as a substitute for science."
In the aforesaid paper, Lawvere uses the language of category theory,  the setting is that of cartesian closed categories 
and therefore the technical presentation can easily get out of reach of most people's heads.

Thankfully, Noson S. Yanofsky has written a nice simple paper 
{\it A Universal Approach to Self-Referential Paradoxes, Incompleteness and Fixed Points} 
\cite{y} that is a lot more accessible and enjoyable. The idea is to use a single formalism, that uses no explicit category theory, to 
describe all these diverse phenomena of self referential paradoxes.
In this paper, we do a similar task, 
but in an even simpler approach, and we add Hilbert's tenth and the Halting problem to the list.
We show that the real reason for certain paradoxes, is that a certain system $g$ has limitations in a certain context. Transcending the boundaries
and allowing limitations, results in inconsistencies, that can be expressed
by fixed points. 
Hilbert's tenth is also surveyed for several rings reaching the boundaries of ongoing interdisciplinary 
research between recursion theory, model theory, algebraic geometry and 
number theory.
The paper is divided into two parts. The first part presents some new connections between self referential paradoxes, diagonalization,
and fixed point theorems, comparing them to previously known ones. The second part is more of an expositive character.
\section{Properties of relations on $\nat$}
\begin{definition}
A {\bf property} $P$ is defined to be a subset of
\;$\bigcup^{\infty}_{r=1}\P(\nat^r).$ We say that a set $X$
satisfies $P$ if $X \in P$ and a function $f:\nat^r\to \nat^s$
satisfies $P$ if as a relation it satisfies $P$.
\end{definition}
\begin{definition}
Let $n \in \nat, S \subseteq \{1,\dots,n\}$ and let $l$ be the
cardinality of $S$. We define the function $\pi^n_S:\nat^n\to\nat^l$
as follows $\proj(x_1,\dots,x_n)=(x_{s_1},\dots,x_{s_l})$ where
$S=\{s_1,\dots,s_l\},$ $1 \leq s_1<s_2<\dots<s_l\leq n.$
\end{definition}
\begin{definition}
Let $P$ be a property, then we say that $P$ is {\bf broad} if it
satisfies the following conditions.
\begin{enumerate}[(i)]
\item The sets $\nat^r, \{x\}$ satisfy $P$ for every $r\in \nat,x\in
\nat^r$.
\item The function of addition and multiplicationm $+, * :\nat^2\to\nat$, and the identity $\text{identity}:\nat\to\nat$ satisfy
$P$.
\item The property $P$ is closed under cross product and
intersection.
\item The function $\proj$ satisfies $P$ for every $n\in \nat,\ S\subseteq
\{1,\dots,n\}$.
\item For every $n\in \nat, \ S\subseteq \{1,\dots,n\}, V\subseteq
\nat^n$ satisfies P, $\proj(V)$ satisfies $P$.
\end{enumerate}
\end{definition}
\begin{examples}
(1)\quad Take $P=\bigcup^{\infty}_{r=1}\P(\nat^r).$\\
(2)\quad Take $P=\{D : D \text{ is a Diophantine set}\}$
\end{examples}
\begin{lemma} \label{inverse}
If $P$ is a broad property and $f:\nat^r\to\nat^s$ satisfies
$P$, $V$ satisfies $P$, then $f^{-1}(V)$ satisfies $P$.
\end{lemma}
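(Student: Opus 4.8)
The plan is to realise $f^{-1}(V)$ as the image under a coordinate projection of an intersection built from the graph of $f$. Recall that, by definition, $f$ satisfying $P$ means that its graph $G_f = \{(x,y)\in \nat^r\times\nat^s : f(x)=y\}$, viewed as a subset of $\nat^{r+s}$, lies in $P$; and here $V\subseteq \nat^s$ with the goal of showing $f^{-1}(V)=\{x\in\nat^r : f(x)\in V\}$ satisfies $P$. So the whole argument is a matter of assembling $f^{-1}(V)$ from operations under which a broad property is known to be closed.

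First I would form the cross product $\nat^r\times V\subseteq \nat^{r+s}$: by clause (i) the set $\nat^r$ satisfies $P$, by hypothesis $V$ satisfies $P$, and by closure under cross product in clause (iii) the product $\nat^r\times V$ satisfies $P$. Next I would intersect with the graph, setting $W := G_f\cap(\nat^r\times V)$; closure under intersection (clause (iii)) shows $W$ satisfies $P$. Unwinding the definitions, $W=\{(x,y): f(x)=y \text{ and } y\in V\}$, that is, $W$ consists of exactly the pairs $(x,f(x))$ with $f(x)\in V$. Finally I would project onto the first $r$ coordinates: taking $n=r+s$ and $S=\{1,\dots,r\}$, the map $\pi^{r+s}_S$ sends $(x_1,\dots,x_r,y_1,\dots,y_s)$ to $(x_1,\dots,x_r)$, so that $\pi^{r+s}_S(W)=\{x\in\nat^r : f(x)\in V\}=f^{-1}(V)$. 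By clause (v), the image under $\pi^{r+s}_S$ of a set satisfying $P$ again satisfies $P$, and therefore $f^{-1}(V)$ satisfies $P$, which completes the argument.

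I expect the only genuinely delicate point to be bookkeeping rather than any real difficulty: one must keep the identification of $\nat^r\times\nat^s$ with $\nat^{r+s}$ consistent throughout, and verify that the intersection truly captures the condition $f(x)\in V$. The latter relies on $G_f$ being a genuine function graph, so that each $x$ occurs paired only with its unique value $f(x)$; it is precisely this functionality that guarantees the projection recovers $f^{-1}(V)$ exactly rather than some larger set. Checking that the chosen index set $S$ discards exactly the $V$-coordinates is the last routine verification. None of these steps is hard, but they are where a careless version of the argument could slip.
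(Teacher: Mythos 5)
Your proposal is correct and is essentially identical to the paper's own proof: both establish the identity $f^{-1}(V)=\pi^{r+s}_{\{1,\dots,r\}}\bigl(f\cap(\nat^r\times V)\bigr)$, where $f$ is viewed as its graph, and then invoke closure under cross product and intersection (clause (iii)) followed by closure under projection (clause (v)). The only cosmetic difference is that the paper additionally cites clause (iv) and verifies the set identity by an explicit chain of equivalences, which matches your own unwinding of $W$.
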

\begin{proof}
If $P$ is a strong property and $f:\nat^r\to\nat^s$, $V$
satisfies $P$, then we claim that
$$f^{-1}(V)=\pi^{r+s}_{\{1,\dots,r\}}(f\cap(\nat^r\times V)).$$ To
prove the claim, we have $\overline{x} \in f^{-1}(V)$ iff
$f(\overline{x})\in V$ iff $\exists \overline{y}$ such that
$\overline{y}\in V$ ,$\overline{y}=f(\overline{x})$ iff $\exists
\;\overline{y}$ such that $\overline{y}\in V
, (\overline{x},\overline{y})\in f.$ Here we treat $f$ as a relation,
so $\overline{x}\in f^{-1}(V)$ iff $\exists \;\overline{y}$ such
that $(\overline{x},\overline{y})\in
f,(\overline{x},\overline{y})\in \nat^r\times V$ and this is
equivalent to $(\overline{x},\overline{y})\in f\cap(\nat^r\times V)$
which, in turn,  is equivalent to $\overline{x}\in
\pi^{r+s}_{\{1,\dots,r\}}(f\cap(\nat^r\times V))$ and hence the
claim is proved. Now by $(iii),(iv),(v)$ it follows that $f^{-1}(V)$
satisfies $P$.
\end{proof}
\begin{theorem}({\bf composition})\label{composition}
Let $P$ be a broad property, and the functions\newline
$h_1,\dots,h_n:\nat^k\to\nat$ satisfy $P$ and the function
$g:\nat^n\to\nat^s$ satisfies $P$, then the function
$f=g(h_1,\dots,h_n):\nat^k\to \nat^s$ satisfies $P$.
\end{theorem}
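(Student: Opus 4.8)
The plan is to regard $f$ as a relation $f\subseteq\nat^{k}\times\nat^{s}=\nat^{k+s}$ and to assemble it from $h_1,\dots,h_n$ and $g$ using only operations already known to preserve $P$: intersection and cross product (item (iii)), the coordinate projections $\proj$ (item (iv)), the image $\proj(V)$ of a $P$-set (item (v)), and the preimage operation furnished by Lemma~\ref{inverse}. The governing identity is the pointwise description
\[
(\overline{x},\overline{z})\in f \iff \exists\,\overline{y}=(y_1,\dots,y_n)\ \big[\,\textstyle\bigwedge_{i=1}^{n}(\overline{x},y_i)\in h_i\ \wedge\ (\overline{y},\overline{z})\in g\,\big],
\]
which shows that building $f$ amounts to (a) forming the simultaneous graph of the tuple $(h_1,\dots,h_n)$, then (b) composing it with $g$ as relations, the existential quantifier over $\overline{y}$ being realized by a projection.

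For step (a), I would work in $\nat^{k+n}$ with coordinates $(\overline{x},y_1,\dots,y_n)$ and, for each $i$, pull $h_i$ back along the projection $\pi^{k+n}_{\{1,\dots,k,\,k+i\}}:\nat^{k+n}\to\nat^{k+1}$ onto the arguments $\overline{x}$ together with the single coordinate $y_i$. Since this projection satisfies $P$ by item (iv) and $h_i$ satisfies $P$ by hypothesis, Lemma~\ref{inverse} gives that $A_i=(\pi^{k+n}_{\{1,\dots,k,\,k+i\}})^{-1}(h_i)$ satisfies $P$. The simultaneous graph $H=\bigcap_{i=1}^{n}A_i=\{(\overline{x},\overline{y}):y_i=h_i(\overline{x})\text{ for all }i\}$ then satisfies $P$ by closure under intersection (item (iii)).

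For step (b), I would pass to $\nat^{k+n+s}$ with coordinates $(\overline{x},\overline{y},\overline{z})$. Pulling $H$ back along $\pi^{k+n+s}_{\{1,\dots,k+n\}}$ and $g$ back along $\pi^{k+n+s}_{\{k+1,\dots,k+n+s\}}$ yields two $P$-sets (again by item (iv) and Lemma~\ref{inverse}), whose intersection $R=\{(\overline{x},\overline{y},\overline{z}):(\overline{x},\overline{y})\in H,\ (\overline{y},\overline{z})\in g\}$ satisfies $P$ by item (iii). Projecting $R$ onto the $\overline{x},\overline{z}$ coordinates, i.e. applying $\pi^{k+n+s}_{\{1,\dots,k\}\cup\{k+n+1,\dots,k+n+s\}}$, eliminates $\overline{y}$ and, by item (v), produces a $P$-set; the displayed pointwise identity confirms that this set is exactly $f$.

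The substance of the argument is carried entirely by Lemma~\ref{inverse} and items (iii)--(v); the one place where care is genuinely required is the bookkeeping of the projection index sets, making sure that each pullback reads off precisely the coordinates it should and that the final projection retains the $\overline{x}$ and $\overline{z}$ blocks while discarding the intermediate $\overline{y}$. This is the same style of index chasing as in the proof of Lemma~\ref{inverse}, so I expect no conceptual obstacle, only the discipline of keeping the coordinate indices consistent throughout.
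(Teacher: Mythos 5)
Your proposal is correct and takes essentially the same route as the paper's proof: both express the graph of $f$ as a projection (item (v)) of an intersection (item (iii)) of cylinders/pullbacks of the graphs of the $h_i$ and of $g$, with Lemma~\ref{inverse} together with item (iv) supplying the pullbacks along coordinate projections. The only differences are cosmetic bookkeeping: the paper treats $n=2$ ``for simplification'' and assembles everything in one step inside $\nat^{k+s+2}$, using cross products $h_1\times\nat^{s+1}$ and $\nat^k\times g$ where you use pullbacks uniformly, whereas you handle general $n$ and stage the construction through the simultaneous graph $H$ first.
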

\begin{proof}
For simplification take $n=2$. By definition; to prove that $f$
satisfies $P$ we treat $f$ as a relation and prove that it satisfies
$P$, as was done in the previous lemma. We have
$(\overline{x},\overline{y})\in f$ iff $\exists \; t_1,t_2$ such
that $(\overline{x},t_1)\in h_1,(\overline{x},t_2)\in
h_2,(t_1,t_2,\overline{y})\in g$  is equivalent to
$(\overline{x},t_1,t_2,\overline{y})\in ((h_1\times \nat^{s+1})\cap
((\pi^{k+s+2}_{\{1,\dots,k,k+2\}})^{-1}(h_2))\cap(\nat^k\times g))$,
so we have\newline
$$f=\pi^{k+s+2}_{\{1,\dots,k,k+2,k+3,\dots,k+s+2\}}((h_1\times
\nat^{s+1})\cap
((\pi^{k+s+2}_{\{1,\dots,k,k+2\}})^{-1}(h_2))\cap(\nat^k\times
g)).$$ Since $h_1,h_2,g$ satisfy $P$, we have by lemma
\ref{inverse} and by $(iii),(iv),(v)$, that $f$ satisfies $P$
\end{proof}
\begin{corollary}\label{poly}
If $P$ is a broad property, then every polynomial in any
number of variables satisfies $P.$
\end{corollary}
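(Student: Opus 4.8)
The plan is to prove the statement by induction on the structure of the polynomial, reducing to the base cases of constant functions and coordinate projections and then closing off under the two ring operations via the composition theorem (Theorem \ref{composition}).

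First I would recall that any polynomial $p:\nat^k\to\nat$ is built from the $k$ coordinate variables $x_1,\dots,x_k$ together with finitely many constants in $\nat$, using finitely many applications of $+$ and $*$. This structure suggests an induction on the number of such operations (equivalently, on the way $p$ is assembled from subexpressions), with arbitrary $k$ built into the base cases so that ``any number of variables'' is covered automatically.

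For the base cases, I would first note that each coordinate variable, regarded as the function $x_i:\nat^k\to\nat$, is precisely $\pi^k_{\{i\}}$, and hence satisfies $P$ by condition (iv). A constant function $c:\nat^k\to\nat$, regarded as a relation, is exactly $\nat^k\times\{c\}$; since $\nat^k$ and $\{c\}$ both satisfy $P$ by (i), and $P$ is closed under cross product by (iii), the constant function satisfies $P$ as well. For the inductive step, suppose $f,g:\nat^k\to\nat$ satisfy $P$, arising from smaller subexpressions. Then $f+g$ is the composition ${+}\,(f,g)$ of $+:\nat^2\to\nat$ with $f,g$, and $f*g$ is the composition ${*}\,(f,g)$. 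Since $+$ and $*$ satisfy $P$ by (ii), the composition theorem (with $n=2$ and $s=1$) immediately yields that $f+g$ and $f*g$ satisfy $P$. As every polynomial is obtained from the base cases by finitely many such steps, the induction closes.

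The argument carries essentially no difficulty beyond careful bookkeeping. The only points requiring a moment's care are the constant-function base case, where one must view the function as the relation $\nat^k\times\{c\}$ in order to invoke (i) and (iii), and the matching of arities when applying the composition theorem (the outer function $+$ or $*$ has type $\nat^2\to\nat$, while the inner functions $f,g$ share the common domain $\nat^k$). Neither is a genuine obstacle, so the main thing to get right is simply to phrase the induction cleanly enough that both arbitrarily many variables and arbitrarily high degree are handled uniformly.
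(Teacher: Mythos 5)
Your proposal is correct and follows essentially the same route as the paper: constant functions handled as the relation $\nat^k\times\{c\}$ via (i) and (iii), variables as a base case, and structural induction closed off by the composition theorem with $+$ and $*$ from (ii). If anything, your treatment of the variable base case is slightly sharper than the paper's: you identify the coordinate function $x_i$ with $\pi^k_{\{i\}}$ and invoke (iv), which is exactly what the composition theorem needs for polynomials in several variables, whereas the paper appeals to the identity function and (ii) at that step.
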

\begin{proof}
The proof is divided into 3 parts.
First, the constant function satisfies $P$. Let $r,s\in \nat,c\in \nat^r$, then the function $f:\nat^s\to\nat^r$ defined as $f(x)=c$
for all $x\in\nat^k$ satisfies $P$. Because it is as a relation  
equal $\nat^s\times\{c\}$ which satisfies $P$ by $(i),(iii)$ it
follows by definition that $f$ satisfies $P$.\newline Second,  let
$r\in \nat$ then the function $f:\nat^k\to\nat^k$ defined by $f(x)=x$
for all $x\in\nat^k$ satisfies $P$ by $(ii)$.\newline Third, by
induction we can use part 1, 2, theorem \ref{composition} to show that every polynomial in any number of variables satisfies
$P$.
\end{proof}
\begin{theorem}\label{dio}
Let $P$ be any broad property, then every Diophantine set satisfies $P$.
\end{theorem}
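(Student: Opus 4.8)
The plan is to take the defining representation of a Diophantine set and decompose it into the three operations that the broad property is already known to respect: forming polynomials, intersecting with the diagonal (to encode an equation), and projecting (to encode the existential quantifier). Recall that $D\subseteq \nat^n$ being Diophantine means there are polynomials such that $\overline{x}\in D$ iff there exists $\overline{y}\in\nat^m$ with $p(\overline{x},\overline{y})=q(\overline{x},\overline{y})$; here I split the defining integer polynomial into its positive and negative parts so that $p,q:\nat^{n+m}\to\nat$ have natural-number coefficients and hence satisfy $P$ by Corollary \ref{poly}.

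First I would build the pairing $(p,q):\nat^{n+m}\to\nat^2$. Applying Theorem \ref{composition} with $h_1=p$, $h_2=q$ and $g$ the identity on $\nat^2$ (which satisfies $P$ by the second part of Corollary \ref{poly}) shows that $(p,q)$ satisfies $P$ as a function. The crucial observation is then that the equation $p=q$ describes the preimage of the diagonal: the diagonal $\Delta=\{(a,a):a\in\nat\}\subseteq\nat^2$ is exactly the identity relation on $\nat$, so it satisfies $P$ by condition $(ii)$. Therefore the equalizer $E=\{\overline{z}\in\nat^{n+m}:p(\overline{z})=q(\overline{z})\}=(p,q)^{-1}(\Delta)$ satisfies $P$ by Lemma \ref{inverse}.

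Finally, the existential quantifier is precisely a projection: $D=\pi^{n+m}_{\{1,\dots,n\}}(E)$. Since $E$ satisfies $P$, condition $(v)$ yields that $D$ satisfies $P$, completing the argument. I expect no genuine obstacle in this chain of reasoning; the only point that requires care is the very first reduction, namely clearing the defining polynomial to natural-number coefficients and recognising the equation as a preimage of the identity relation. Once this is set up, every subsequent step is a direct appeal to a closure property that has already been established for broad $P$.
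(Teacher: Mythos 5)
Your proof is correct, and it shares the paper's skeleton --- exhibit the Diophantine set as a projection of the solution set of a polynomial equation, then invoke closure under projection (v) --- but the encoding of that solution set is genuinely different. The paper works in $\nat^{r+m+1}$: it takes the graph of the single defining polynomial $Q$ (a $P$-set by Corollary \ref{poly}), intersects it with $\nat^{r+m}\times\{0\}$ (a $P$-set by (i) and (iii)), and projects, so $D=\pi^{r+m+1}_{\{1,\dots,r\}}(Q\cap(\nat^{r+m}\times\{0\}))$; it never needs Lemma \ref{inverse} or Theorem \ref{composition}. You instead stay in $\nat^{n+m}$ and realize the solution set as $(p,q)^{-1}(\Delta)$, the preimage of the identity relation under the pair map, which costs you Theorem \ref{composition} (to form $(p,q)$) and Lemma \ref{inverse} (for the preimage). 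What the paper's route buys is brevity. What your route buys is rigor on a point the paper glosses over: a defining polynomial $Q$ with $Q=0$ has integer coefficients and may take negative values, so its graph is not literally a relation on $\nat$, and Corollary \ref{poly} (which builds polynomials only from $+$, $*$ and constants in $\nat$) does not strictly apply to it; your splitting into $p=q$ with natural-number coefficients repairs this, and is in fact the format the paper itself adopts later when it enumerates Diophantine sets via $P_{L(n)}=P_{R(n)}$. One caveat: your claim that the identity of $\nat^2$ satisfies $P$ rests on the same gloss as the paper's own Corollary \ref{poly}, since condition (ii) literally gives only the identity of $\nat$; this is patchable, e.g.\ the graph of the identity of $\nat^2$ equals $(\pi^4_{\{1,3\}})^{-1}(\mathrm{id}_\nat)\cap(\pi^4_{\{2,4\}})^{-1}(\mathrm{id}_\nat)$, which satisfies $P$ by Lemma \ref{inverse} and (iii) --- or you could avoid the pair map altogether by noting that your equalizer is also the projection of $(\text{graph of } p)\cap(\text{graph of } q)\subseteq\nat^{n+m+1}$, which brings your argument even closer to the paper's.
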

\begin{proof}
Let $r\in\nat,\ D\subseteq\nat^r$ be a Diophantine set, then by
definition of Diophantine sets there exist a positive integer $m$
and a polynomial $Q(x_1,\dots,x_r,y_1,\dots,y_m)$ such that
$$\overline{x}\in D \Leftrightarrow \exists\;y_1,\dots,y_m,\text{
such that }Q(\overline{x},y_1,\dots,y_m)=0.$$ So $\overline{x}\in D
\Leftrightarrow \exists\; \overline{y} \text{ such that }
(\overline{x},\overline{y},0)\in Q$ which is equivalent to
$\overline{x}\in
\pi^{r+m+1}_{\{1,\dots,r\}}(Q\cap(\nat^{r+m}\times\{0\})).$ So
$$D=\pi^{r+m+1}_{\{1,\dots,r\}}(Q\cap(\nat^{r+m}\times\{0\}))$$ and
hence by corollary \ref{poly},$(iii),(v)$ it follows that $D$
satisfies $P.$
\end{proof}
\begin{theorem}\label{smallest property}
The set of all Diophantine sets forms  the smallest strong property.
\end{theorem}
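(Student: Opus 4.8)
The plan is to prove the statement in two parts: first that the class of all Diophantine sets, call it $\mathcal{D}$, is itself a broad property (that is, it satisfies conditions (i)--(v) of the definition), and second that $\mathcal{D}$ is contained in every broad property, so that it is the least such property under inclusion. (Here I read \emph{strong} and \emph{broad} as synonymous, matching the usage elsewhere in the paper.)

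For the second part I would simply appeal to Theorem \ref{dio}, which asserts that every Diophantine set satisfies any broad property $P$; this is exactly the statement $\mathcal{D} \subseteq P$ for every broad $P$. Thus $\mathcal{D}$ is a lower bound for the broad properties ordered by inclusion, and once $\mathcal{D}$ is known to be broad it is automatically the smallest one. So no new work is needed here beyond citing the earlier theorem.

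The substance of the argument is the first part, the verification of (i)--(v) for $\mathcal{D}$, which I would carry out condition by condition. For (i), $\nat^r$ is cut out by the zero polynomial and the singleton $\{(a_1,\dots,a_r)\}$ by $\sum_{i=1}^r (x_i-a_i)^2=0$. For (ii), the graphs of $+$, $*$ and the identity are defined by $x+y-z=0$, $xy-z=0$ and $x-y=0$ respectively, all with integer coefficients, which the standard definition of Diophantine permits. For (iii), if $D_1,D_2$ are cut out by $\exists\,\overline{u}\,Q_1=0$ and $\exists\,\overline{v}\,Q_2=0$, then their cross product and, for subsets of a common $\nat^r$, their intersection are both defined by $Q_1^2+Q_2^2=0$ after merging the existential variables, so $\mathcal{D}$ is closed under both. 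For (iv), the graph of $\proj$ is $\{(\overline{x},\overline{y}):\sum_{j=1}^l (y_j-x_{s_j})^2=0\}$, manifestly Diophantine. Finally for (v), if $V$ is defined by $\exists\,\overline{u}\,Q(\overline{x},\overline{u})=0$, then $\proj(V)$ is defined by $\exists\,\overline{x}\,\exists\,\overline{u}\,\bigl(Q(\overline{x},\overline{u})=0 \wedge \bigwedge_{j}(y_j=x_{s_j})\bigr)$, i.e.\ by a single sum-of-squares equation with the new variables existentially quantified, and hence is again Diophantine.

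I expect the only genuinely load-bearing steps to be (v) together with the sum-of-squares encodings used in (ii)--(iv): what makes all five conditions go through is precisely that Diophantine sets are closed under finite conjunction (encoded by sums of squares) and under existential quantification (projection), which is exactly the defining closure behaviour of the class. Given these two closures the individual verifications are routine, and combining the two parts shows that $\mathcal{D}$ is the smallest broad property, as claimed.
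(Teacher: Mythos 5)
Your proof is correct, and its skeleton --- verify conditions (i)--(v) for the class of Diophantine sets, then obtain minimality from Theorem~\ref{dio} --- is the same as the paper's (the paper's own proof only performs the verification, leaving the appeal to Theorem~\ref{dio} implicit, and it says ``strong'' where it means ``broad'', exactly as you note). Where you genuinely differ is in the polynomial encodings, and there your choices are the ones that actually work. The paper encodes conjunctive conditions by \emph{products}: the singleton via $\prod_{k=1}^n(x_k-c_k)=0$, cross product and intersection via $PQ=0$, and the graph of $\proj$ via $\prod_{k=1}^{l}(x_{s_k}-x_{m+k})=0$. Since a product over an integral domain vanishes iff some factor vanishes, each of these defines a \emph{union}, not the intended intersection: $\prod_{k}(x_k-c_k)=0$ cuts out a union of hyperplanes rather than a point, and $\exists\overline{k},\overline{z}\,[P(\overline{x},\overline{k})Q(\overline{y},\overline{z})=0]$ defines $(A\times\nat^s)\cup(\nat^r\times B)$ rather than $A\times B$. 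Your sum-of-squares encodings --- $\sum_{i}(x_i-a_i)^2=0$, and $Q_1^2+Q_2^2=0$ with the existential witnesses renamed apart --- are the standard correct ones, valid because over $\nat$ a sum of squares vanishes only when every summand does. So your proposal does not merely reprove the theorem; it repairs the paper's verification of items (i), (iii) and (iv). One caveat worth recording: the sum-of-squares trick relies on the ordering of $\nat$ and fails in a general integral domain (e.g.\ $1^2+i^2=0$ in $\mathbb{Z}[i]$), which is precisely why the paper's later treatment of quadratic rings must instead use expressions of the form $x^2-d'y^2$ as in Lemma~\ref{lemma_1}; your argument, unlike a correct product-free one would need to be there, is specific to $\nat$.
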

\begin{proof}
To prove that the Diophantine sets form a strong property, we just
need to check the conditions.
\begin{enumerate}[(i)]
\item The zero polynomial proves that the set $\nat^r$ is
diophantine and the polynomial
$p(x_1,\dots,x_n)=\prod^n_{k=1}(x_k-c_k)$ defines the singleton
$c=(c_1,\dots,c_n)\in\nat^n$.
\item The addition, multiplication and identity functions are defined
by the polynomials $+(x,y,z)=x+y-z,*(x,y,z)=x*y-z,i(x,y)=x-y$
receptively.
\item Assume $A\subseteq\nat^r,B\subseteq\nat^s$ are Diophantine, then
there exist two polynomials $P,Q$ such that $\overline{x}\in A
\Leftrightarrow \exists\;y_1,\dots,y_m,\text{ such that
}P(\overline{x},y_1,\dots,y_m)=0$, $\overline{x}\in B\Leftrightarrow
\exists\;y_1,\dots,y_s,\text{ such that
}Q(\overline{x},y_1,\dots,y_l)=0,$ then the polynomial $PQ$ has the
property that $(\overline{x},\overline{y})\in A\times B
\Leftrightarrow \exists k_1,\dots,k_m,z_1,z_l \text{ such that }
P(\overline{x},\overline{k})\times
Q(\overline{y},\overline{z})=0$, so $A\times B$ is Diophantine, if
$r=s$ then the polynomial $P(x_1,\dots,x_r,y_1,\dots,y_m)\times
Q(x_1,\dots,x_r,z_1,\dots,z_l)$ defines the intersection, so again $A\cap
B$ is Diophantine.

\item The polynomial that defines the graph of the function $\proj$ is
$\prod^{l}_{k=1}(x_{s_k}-x_{m+k})$ where $S=\{s_1,\dots,s_l\},1 \leq
s_1<s_2<\dots<s_l\leq n.$
\item Let $A$ be a Diophantine set so there exist \newline a polynomial $P$
such that $\overline{x} \in A \Leftrightarrow
\exists\;y_1,\dots,y_m,\text{ such that
}P(\overline{x},y_1,\dots,y_m)=0$ so the set $\proj(A)$ is
Diophantine and defined by the polynomial $Q=P$ but with a
permutation of arguments \end{enumerate}
\end{proof}
\section{A unifying theorem (Diagonalization)}
\begin{theorem} \label{mainresult}
Let $P$ be a broad property such that $P\cap \P (\nat)$ is
countable. Let $D_1,D_2,\dots$ be an enumeration of subsets of
$\nat$ that satisfy $P$, then the function $g:\nat^2\to\nat$ defined
as the follows
\begin{equation*}
g(n,x) =
\begin{cases}
0 & \text{if } x\in D_n,\\
1 & \text{if } x\notin D_n.
\end{cases}
\end{equation*}
is not recursive, and $g$ does not satisfy $P$.
\end{theorem}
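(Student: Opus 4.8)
The plan is to manufacture a single Cantor diagonal set and let it collide with both halves of the statement. Since $P\cap\P(\nat)$ is countable, the enumeration $D_1,D_2,\dots$ exhausting the $P$-subsets of $\nat$ is available, and I would define
$$A=\{x\in\nat : g(x,x)=1\}=\{x\in\nat : x\notin D_x\}.$$
The first thing to record is the purely set-theoretic heart of the matter: $A$ coincides with no $D_n$. For each fixed $n$ we have $n\in A \Leftrightarrow n\notin D_n$, so $A$ and $D_n$ disagree at the single point $n$; were $A=D_n$ we would obtain $n\in D_n\Leftrightarrow n\notin D_n$, which is absurd. Hence $A\neq D_n$ for every $n$, and because $D_1,D_2,\dots$ enumerates \emph{all} subsets of $\nat$ that satisfy $P$, the set $A$ does not satisfy $P$. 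This one observation is precisely what both conclusions will contradict.

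For the claim that $g$ does not satisfy $P$, I would argue by contradiction: suppose $g$ satisfies $P$. The diagonal substitution $x\mapsto g(x,x)$ is the composition of $g$ with the two functions $h_1=h_2=\text{identity}$, which satisfy $P$ by $(ii)$; so by the composition theorem (Theorem \ref{composition}) the function $x\mapsto g(x,x):\nat\to\nat$ satisfies $P$. Since the singleton $\{1\}$ satisfies $P$ by $(i)$, Lemma \ref{inverse} shows that $A$, being the inverse image of $\{1\}$ under this $P$-function, satisfies $P$. This contradicts the diagonal observation above, so $g$ cannot satisfy $P$.

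For the claim that $g$ is not recursive, I would again argue by contradiction: suppose $g$ is recursive. Then $x\mapsto g(x,x)$ is recursive and the predicate $g(x,x)=1$ is decidable, so $A$ is a recursive set, and in particular recursively enumerable. Invoking the identification of recursively enumerable sets with Diophantine sets (the MRDP theorem recalled in the introduction), $A$ is Diophantine, whence by Theorem \ref{dio} the set $A$ satisfies $P$. This again contradicts $A\neq D_n$ for all $n$, so $g$ is not recursive.

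The closure points — that the diagonal substitution and the inverse image stay inside $P$ — are handled verbatim by Theorem \ref{composition} and Lemma \ref{inverse}, so I expect no real difficulty there, and the Cantor step is elementary. The step carrying genuine weight is the non-recursiveness half: it is not enough that $A$ is \emph{definable} from $g$; I must place $A$ inside the \emph{given} property $P$, and the only available bridge is that every broad property contains all Diophantine sets (Theorems \ref{dio} and \ref{smallest property}) combined with the deep equivalence of recursively enumerable and Diophantine sets. This reliance on MRDP is the substantive ingredient, with everything else reducing to diagonalization together with the closure properties already established.
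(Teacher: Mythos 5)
Your proposal is correct and follows essentially the same route as the paper: the same diagonal set $\{x : x\notin D_x\}$, the composition theorem applied with the identity function, Lemma \ref{inverse} to pull back $\{1\}$, and MRDP together with Theorem \ref{dio} to import recursiveness into $P$. The only immaterial difference is in the second half: the paper applies MRDP to $g$ itself (a recursive function is Diophantine, hence $g$ would satisfy $P$, contradicting the first half), whereas you apply MRDP to the diagonal set $A$ and run the contradiction against the Cantor observation directly.
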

\begin{proof}
Assume $g$ satisfies $P$, then by theorem \ref{composition} and by
$(ii)$ it follows that the function $g(n,n):\nat\to\nat$ satisfies
$P$ and by lemma \ref{inverse}, the set $V=\{n|n\notin
D_n\}=g^{-1}(\{1\})$ satisfies $P$. Since $V\subseteq\nat$ satisfies
$P$ and the sequence of the sets $(D_n)_{n\in\nat}$ form an
enumeration of the subsets of $\nat$ satisfying $P$. So $\exists
\;n$ such that $V=D_n$ but then $n\in D_n \Leftrightarrow n\in V$
but by definition of $V$,we have $n\in V \Leftrightarrow n\notin
D_n$ And this is a contradiction.\newline So $g$ does not satisfy
$P$. Now  assume that $g$ is recursive, since a function is
Diophantine if and only if it is recursive \cite{Martin}, hence $g$
is Diophantine. By theorem \ref{dio} it follows that $g$ satisfies $P$
which leads to a contradiction. Hence $g$ is not recursive.
\end{proof}
\begin{theorem} (Pairing Function Theorem 1). There are Diophantine functions
$P(x,y)$, $L(z)$, $R(z)$ such that

(1) for all $x,y,L(P(x,y))=x$, $R(P(x,y))=y$, and

(2) for all $z,P(L(z),R(z))=z$, $L(z)\leq z,R(z)\leq z$.
\end{theorem}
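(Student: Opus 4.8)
The plan is to take $P$ to be the Cantor pairing function, whose graph is a polynomial relation, and to define $L$ and $R$ as the two coordinate projections of its inverse. I would let $P(x,y)$ be determined by the identity $2P(x,y)=(x+y)(x+y+1)+2x$; since $(x+y)(x+y+1)$ is a product of two consecutive integers it is even, so this genuinely defines a function $P\colon\nat^2\to\nat$. Its graph is
\[\{(x,y,z)\in\nat^3 : 2z=(x+y)(x+y+1)+2x\},\]
which is the solution set of a single polynomial equation and is therefore Diophantine by Corollary \ref{poly} together with conditions $(iii),(v)$ (equivalently, $P$ is plainly recursive, hence Diophantine by \cite{Martin}). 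So the function $P$ is Diophantine.

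For the inverses I would define $L$ and $R$ directly by their graphs,
\[\mathrm{graph}(L)=\{(z,x):\exists y\,(2z=(x+y)(x+y+1)+2x)\}\]
and
\[\mathrm{graph}(R)=\{(z,y):\exists x\,(2z=(x+y)(x+y+1)+2x)\}.\]
Each is obtained from the Diophantine graph of $P$ by an existential quantification together with a reordering of coordinates, that is, by an application of $\proj$ and of condition $(v)$, so each is Diophantine. The real content of the theorem is then that these two relations are in fact graphs of \emph{total} functions that invert $P$; this is precisely the assertion that $P$ is a bijection $\nat^2\to\nat$.

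The main obstacle, and the only genuinely non-formal step, is establishing that $P$ is a bijection. I would argue by the standard diagonal enumeration: fixing $w=x+y$, as $(x,y)$ ranges over the pairs with $x+y=w$ (so $x$ runs from $0$ to $w$), the value $P(x,y)=\frac{w(w+1)}{2}+x$ runs through exactly the $w+1$ consecutive integers from $\frac{w(w+1)}{2}$ to $\frac{w(w+1)}{2}+w$. Since $\frac{w(w+1)}{2}+w+1=\frac{(w+1)(w+2)}{2}$, these blocks for $w=0,1,2,\dots$ tile $\nat$ with neither gaps nor overlaps, which yields injectivity and surjectivity at once. Hence every $z$ has a unique preimage $(x,y)$, so the relations above are total functions and $L,R$ are indeed Diophantine.

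Finally I would read off the required identities. Injectivity gives $L(P(x,y))=x$ and $R(P(x,y))=y$, which is $(1)$; surjectivity together with the definitions of $L,R$ gives $P(L(z),R(z))=z$, the first half of $(2)$. For the bounds, $P(x,y)\geq x$ because the quadratic term is nonnegative, and $P(x,y)\geq y$ because $\frac{(x+y)(x+y+1)}{2}\geq\frac{y(y+1)}{2}\geq y$; applying these to $z=P(L(z),R(z))$ gives $L(z)\leq z$ and $R(z)\leq z$, completing $(2)$.
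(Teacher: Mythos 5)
Your proof is correct and complete: the graph of $P$ is a single polynomial equation, the graphs of $L$ and $R$ are existential projections of it, the diagonal-block tiling argument does establish bijectivity, and your bound checks $P(x,y)\geq x$ and $P(x,y)\geq y$ are valid. Note, however, that the paper gives no proof of this theorem at all — it simply cites Davis (\cite{Martin}, p.~203, Theorem~1.1) — and your construction via the Cantor pairing function $2P(x,y)=(x+y)(x+y+1)+2x$ is essentially the standard argument in that reference, so what you have done is supply, correctly and self-containedly, the details the paper delegates to the literature.
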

\begin{proof}
See \cite{Martin} p. 203 Theorem 1.1.
\end{proof}
\begin{corollary}
Hilbert's tenth problem $(H10)$ is unsolvable
\end{corollary}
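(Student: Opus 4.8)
The plan is to read off the unsolvability of $H10$ from Theorem~\ref{mainresult} by instantiating the broad property $P$ as the class of Diophantine sets and feeding in an enumeration of the Diophantine subsets of $\nat$ that is itself cut out by a single Diophantine relation. First I would check the hypotheses of Theorem~\ref{mainresult} for this $P$. By Theorem~\ref{smallest property} the Diophantine sets form a broad property; and since each Diophantine set is defined by a polynomial with integer coefficients in finitely many variables, there are only countably many of them, so $P\cap\P(\nat)$ is countable. Thus Theorem~\ref{mainresult} applies and, for \emph{any} enumeration $D_1,D_2,\dots$ of the Diophantine subsets of $\nat$, the associated diagonal function $g:\nat^2\to\nat$ fails to be recursive.

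The decisive step is to choose the enumeration well. Invoking the fact that every recursively enumerable set is Diophantine, together with the existence of a universal recursively enumerable set (equivalently, a universal Turing machine), one obtains a \emph{universal Diophantine set}: a polynomial $U(n,x,\overline{y})$ such that the sections $D_n=\{x:\exists\,\overline{y}\ U(n,x,\overline{y})=0\}$ run, as $n$ ranges over $\nat$, through exactly the Diophantine subsets of $\nat$. The pairing functions $P,L,R$ of the Pairing Function Theorem are the book-keeping device behind this construction: they let one contract a tuple of witnesses to a single witness and so keep the number of bound unknowns finite while coding the entire list of Diophantine definitions into one parametrised family. I would take $(D_n)_{n\in\nat}$ to be the enumeration supplied by such a $U$.

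Finally I would argue by contradiction. Suppose $H10$ were solvable, that is, suppose there were an algorithm deciding for an arbitrary polynomial equation whether it has a solution in $\nat$. Given a pair $(n,x)$ I would form the equation $U(n,x,\overline{y})=0$ in the unknowns $\overline{y}$ and run this algorithm on it; its answer is exactly whether $x\in D_n$, and hence it computes $g(n,x)$. Then $g$ would be recursive, contradicting the conclusion of Theorem~\ref{mainresult} for the chosen enumeration. Therefore no such algorithm can exist and $H10$ is unsolvable.

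I expect the main obstacle to be not the diagonalization---which is entirely absorbed into Theorem~\ref{mainresult}---but the production of the universal Diophantine set $U$. This is where the genuinely hard input enters, namely Matiyasevich's theorem identifying the recursively enumerable sets with the Diophantine sets; the pairing functions of the Pairing Function Theorem play only the supporting role of controlling the number of existentially quantified variables. Everything else is the soft closure calculus already established in Theorem~\ref{dio} and Theorem~\ref{smallest property}.
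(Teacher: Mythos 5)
Your proof is correct, and its skeleton---instantiate Theorem~\ref{mainresult} with $P$ the class of Diophantine sets, feed in an effective enumeration $(D_n)$, and observe that an $H10$ algorithm would compute the diagonal function $g$---is exactly the paper's. The genuine difference lies in how the enumeration is manufactured. The paper never builds a universal Diophantine set: it enumerates, purely syntactically, all polynomials with positive coefficients via the pairing functions ($P_1=1$, $P_{3i-1}=x_{i-1}$, $P_{3i}=P_{L(i)}+P_{R(i)}$, $P_{3i+1}=P_{L(i)}\cdot P_{R(i)}$, following \cite{Martin}), and sets $D_n=\{x_0\mid \exists x_1,\dots,x_n\; [P_{L(n)}=P_{R(n)}]\}$. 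That this lists all Diophantine subsets of $\nat$ is elementary bookkeeping (any polynomial equation can be rewritten with positive coefficients on both sides), and the effectiveness of the map from $n$ to the equation $P_{L(n)}=P_{R(n)}$ is what turns an $H10$ decision procedure into a computation of $g$. Consequently Matiyasevich's theorem is invoked only once, inside the proof of Theorem~\ref{mainresult} itself (to conclude that a recursive $g$ would be Diophantine). Your route imports DPRM a second time, to produce a single polynomial $U(n,x,\overline{y})$ uniform in $n$; this is correct and conceptually tidy (it matches the ``universal set'' theme of the introduction), but it is strictly more than the corollary needs: for the final reduction it suffices that from $n$ one can effectively write down \emph{some} polynomial defining $D_n$, not that one fixed polynomial does so uniformly in $n$. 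Relatedly, the pairing functions $L,R$ play a different role in the paper---indexing the pairs of polynomials that form the equations---than the witness-compression role you assign them in the construction of $U$.
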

\begin{proof}
Take the following enumeration of polynomial with positive
coefficients
\begin{eqnarray*}
P_{1} &=&1 \\
P_{3i-1} &=&x_{i-1} \\
P_{3i} &=&P_{L(i)}+P_{R(i)} \\
P_{3i+1} &=&P_{L(i)}\cdot P_{R(i)}.
\end{eqnarray*}
where $L,R$ are the left, right functions respectively. Those are defined in \cite{Martin} p. 202. Finally, let
\[
D_{n}=\{x_{0}|(\exists
x_{1},\ldots,x_{n})[P_{L(n)}(x_{0},x_{1},\ldots, x_{n})=P_{R(n)}(x_{0},x_{1},\ldots,x_{n})]\}.
\]
then $D_n$ is an enumeration of Diophantine sets \cite{Martin} p.206. Assume that $H10$ is
solvable then the function
\begin{equation*}
g(n,x) =
\begin{cases}
0 & \text{if } x\in D_n,\\
1 & \text{if } x\notin D_n.
\end{cases}
\end{equation*}
is recursive which contradicts theorem \ref{mainresult}.
\end{proof}
\begin{corollary}(Halting problem)
There is no algorithm such that, given a program and an input to that
program, determines if the program halts at the given input or not.
\end{corollary}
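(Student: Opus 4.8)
The plan is to apply the master diagonalization result, Theorem \ref{mainresult}, to the property of being recursively enumerable (listable), mirroring the argument just used for Hilbert's tenth problem. First I would take $P$ to be the class of all recursively enumerable relations on $\nat$. By Matiyasevich's theorem the recursively enumerable relations coincide with the Diophantine ones, so by Theorem \ref{smallest property} (the Diophantine sets form the smallest broad property) this $P$ is broad; and since there are only countably many Turing machines, $P\cap\P(\nat)$ is countable, so the hypotheses of Theorem \ref{mainresult} are met.

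Next I would produce the required enumeration of the subsets of $\nat$ that satisfy $P$. Fix an effective listing $M_1,M_2,\dots$ of all Turing machines and set
\begin{equation*}
W_n=\{x\in\nat : M_n \text{ halts on input } x\}.
\end{equation*}
Each $W_n$ is recursively enumerable, hence satisfies $P$, and conversely every listable subset of $\nat$ is the halting set of some machine, so $(W_n)_{n\in\nat}$ enumerates precisely the subsets of $\nat$ satisfying $P$. Applying Theorem \ref{mainresult} with $D_n=W_n$ yields that the function
\begin{equation*}
g(n,x)=
\begin{cases}
0 & \text{if } x\in W_n,\\
1 & \text{if } x\notin W_n,
\end{cases}
\end{equation*}
is not recursive.

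Finally I would derive the theorem by contraposition. Suppose, for contradiction, that there is an algorithm $H$ which, given (the code of) a program and an input, decides whether that program halts on that input. Then $g$ becomes computable: on input $(n,x)$, run $H$ on the pair $(M_n,x)$ and output $0$ if $H$ reports that $M_n$ halts on $x$, and $1$ otherwise. Since $x\in W_n$ holds exactly when $M_n$ halts on $x$, this procedure computes $g$, so $g$ would be recursive, contradicting the previous paragraph. Hence no such halting algorithm can exist.

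The step I expect to carry the real weight is the identification of the property: one must be sure that ``recursively enumerable'' genuinely yields a broad, countable property and that the $W_n$ exhaust exactly the sets satisfying $P$. Both facts lean on the nontrivial equivalence between listable and Diophantine sets used implicitly throughout. The diagonalization itself is entirely outsourced to Theorem \ref{mainresult}, and the reduction of $g$ to a hypothetical halting decider is immediate once the $W_n$ are recognized as halting sets.
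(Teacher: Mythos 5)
Your proposal is correct and follows essentially the same route as the paper: both identify the halting sets as an enumeration of the listable (equivalently, by Matiyasevich, Diophantine) subsets of $\nat$, invoke Theorem \ref{mainresult} to conclude that $g$ is not recursive, and then observe that a halting decider would make $g$ recursive. You merely spell out more explicitly the verification of the hypotheses and the final contraposition, which the paper compresses into ``which is the required.''
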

\begin{proof}
Let $D_{n}=\{x|\text{the program number } n \text{ halt  at input }
x\}$ Then it is easy to see that $D_n$ form an enumeration of
listable sets, but listable sets are just Diophantine sets so by
theorem \ref{dio},\ref{mainresult} we get that the function
\begin{equation*} g(n,x) =
\begin{cases}
1 & \text{if } \text{the program number } n \text{ halt  at input }x\\
0 & \text{if } \text{otherwise}.
\end{cases}
\end{equation*}
is not recursive which is the required.
\end{proof}
\begin{corollary}\label{cor}
There exists a listable subset of $\nat$ which is not recursive.
\end{corollary}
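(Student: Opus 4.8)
The plan is to produce the diagonal set and read off both listability and non-recursiveness from the machinery already in place. First I would instantiate Theorem~\ref{mainresult} with $P$ the property of being Diophantine, which by Theorem~\ref{smallest property} is broad and for which $P\cap\P(\nat)$ — the Diophantine subsets of $\nat$ — is countable, there being only countably many polynomials. I would moreover insist that the enumeration $D_1,D_2,\dots$ be the concrete effective one used in the Halting problem and $H10$ corollaries above, so that the associated universal relation $U=\{(n,x):x\in D_n\}$ is itself Diophantine (equivalently, listable). Theorem~\ref{mainresult} then tells me that $g$ is not recursive, and, more usefully, the diagonalization inside its proof shows that
$$V=\{n:n\notin D_n\}=g^{-1}(\{1\})$$
does not satisfy $P$; that is, $V$ is not Diophantine, hence, since Diophantine $=$ listable, $V$ is not listable.

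Next I would consider its complement $K=\nat\setminus V=\{n:n\in D_n\}$ and argue that $K$ \emph{is} listable. Writing $\delta:\nat\to\nat^2$ for the diagonal map $\delta(n)=(n,n)$, which is a polynomial and so satisfies $P$ by Corollary~\ref{poly}, I have $K=\delta^{-1}(U)$, whence Lemma~\ref{inverse} gives that $K$ satisfies $P$, i.e. $K$ is Diophantine and therefore listable. Phrased through the Halting problem corollary, this is just the remark that one lists $K$ by running program $n$ on input $n$ and dovetailing over $n$.

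Finally I would establish non-recursiveness by complementation. Recursive subsets of $\nat$ are closed under complement, and every recursive set is listable; so if $K$ were recursive then $V=\nat\setminus K$ would be recursive and hence listable, contradicting the previous paragraph. Thus $K$ is a listable subset of $\nat$ that is not recursive, which is the required statement.

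The step I expect to carry the real weight is the listability of $K$, equivalently the Diophantineness of the universal relation $U$: the enumeration supplied abstractly by Theorem~\ref{mainresult} need not make $\{(n,x):x\in D_n\}$ Diophantine, so it is essential to use the explicit effective enumeration of the earlier corollaries (built from a universal Turing machine, or from a universal Diophantine set via the Pairing Function Theorem). Granting that, the diagonal inequality $V\neq D_n$ for every $n$ together with the closure of the recursive sets under complement does all the remaining work; the whole argument is the familiar observation that the halting set is listable while its complement is not.
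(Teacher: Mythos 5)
Your proposal is correct and is essentially the paper's own argument: both exhibit the same witness, the diagonal set $K=\{n : n\in D_n\}$ for the effective enumeration used in the Halting/$H10$ corollaries, which is listable because that enumeration has a listable (Diophantine) universal relation, and which is non-recursive by the diagonalization of Theorem \ref{mainresult}. The only cosmetic difference is the finish: the paper reads non-recursiveness of $K$ directly from the non-recursiveness of the diagonal function $g(n,n)$ (in effect the characteristic function of $K$), whereas you route through closure of recursive sets under complement together with the non-listability of $V=K^{c}$; your explicit justification that $K$ satisfies $P$ via the universal relation, the diagonal map and Lemma \ref{inverse} is a welcome sharpening of the paper's ``it is easy to see that it is listable.''
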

\begin{proof}
In the previous proof and the unifying theorem proof we proved that
the function $g(n,n)$ is not recursive, so the set $g(n,n)^{-1}$ is
not recursive, but it is easy to see that it is listable.
\end{proof}
\begin{definition}
A relation $R\subseteq\nat^r$ on $\nat$ is said to be definable if
there exists a first order formula $\alpha(v_1,\dots,v_r)$ such that
$$\overline{x}\in R\Leftrightarrow\;\alpha(\overline{x}) \text{ is true in } \nat.$$
\end{definition}
\begin{examples}
Every Diophantine set is definable.
\end{examples}
\begin{corollary}(One version of G\"odel's Incompleteness Theorem)
Let $A$ be a recursive set of sentences true in $ \nat$, then there
exist a sentence $\sigma$ such that $\sigma$ is true in $\nat$ but
$A\nvdash\sigma.$
\end{corollary}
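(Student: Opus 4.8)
The plan is to derive the statement from Corollary \ref{cor} by combining the soundness of first-order logic over $\nat$ with the fact that the theorems of a recursive axiom set form a listable set. First I would invoke Corollary \ref{cor} to fix a listable set $W\subseteq\nat$ that is \emph{not} recursive. Since listable sets are exactly the Diophantine sets, $W$ is Diophantine, and every Diophantine set is definable, so there is a first-order formula $\alpha(v)$ with $n\in W\Leftrightarrow\alpha(n)$ true in $\nat$. The reason for insisting that $W$ be non-recursive is the standard characterization that a set is recursive if and only if both it and its complement are listable; hence, although $W$ itself is listable, its complement $\nat\setminus W$ cannot be.

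Next I would argue by contradiction, assuming that \emph{every} sentence true in $\nat$ is provable from $A$. Because $A$ is recursive, one can effectively verify whether a finite sequence of sentences is a proof from $A$, so the set of sentences provable from $A$ is listable; composing with the recursive map $n\mapsto\ulcorner\neg\alpha(n)\urcorner$ sending $n$ to the G\"odel number of the sentence $\neg\alpha(\underline{n})$ then shows that $\{n : A\vdash\neg\alpha(n)\}$ is listable as well. I would then pin this set down exactly. On the one hand, if $n\notin W$ then $\neg\alpha(n)$ is true in $\nat$, hence provable from $A$ by the contradiction hypothesis. On the other hand, $A$ consists solely of sentences true in $\nat$, so by soundness everything provable from $A$ is true; thus $A\vdash\neg\alpha(n)$ forces $\neg\alpha(n)$ true, i.e.\ $n\notin W$. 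The two implications give $\{n : A\vdash\neg\alpha(n)\}=\nat\setminus W$, so $\nat\setminus W$ is listable, contradicting the choice of $W$. Therefore some sentence true in $\nat$ is not provable from $A$, and this is the desired $\sigma$.

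The two ingredients doing the real work are that the consequences of a recursive theory form a listable set and that the truth of all members of $A$ in $\nat$ yields soundness. I expect the main obstacle to be making the first of these precise: one must verify carefully that a recursive axiom set together with effective proof-checking really delivers a listable (equivalently Diophantine) set of theorems, so that the earlier machinery --- in particular the equivalence of listable and Diophantine sets and Corollary \ref{cor} --- can be applied. Everything else reduces to the elementary observation that a set and its complement cannot both be listable unless the set is recursive, which is exactly what the non-recursive $W$ rules out.
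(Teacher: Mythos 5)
Your proposal is correct and follows essentially the same route as the paper's own proof: both fix the non-recursive listable set from Corollary \ref{cor}, define it by a formula, consider the set $J=\{n : A\vdash\neg\alpha(n)\}$, use soundness and the recursiveness of $A$ to see that $J$ is listable and contained in the complement, and conclude via the fact that a listable set with listable complement is recursive. The only difference is organizational --- the paper argues directly that $J$ is a \emph{proper} subset of the complement and extracts a witness $q$ giving the explicit sentence $\sigma=\neg\chi(q)$, whereas you assume completeness and derive $J$ equals the complement, reaching the same contradiction.
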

\begin{proof}
By theorem \ref{cor} there exists a listable set subset of $\nat$
which is not recursive. Let $K$ be that set. Since every listable
set is definable so the set $K$ is definable, that is there is a formula
$\chi(v)$ which defines $K$, then $\neg \chi(v)$ defines $K^c$.
Thus we have
$$a\in K^c\Longleftrightarrow \neg \chi(a) \text{ is true }$$
Let $A$ be a recursive set sentences true in $\nat$. Let
$J=\{a\in\nat | A\vdash\neg\chi(a)\}.$ Then we have
\begin{enumerate}

\item $J$ is recursively enumerable

\item $J\subseteq K^c$.

\end{enumerate}

We write $r.e$ instead of recursively enumerable. 
Now $J$ is a proper subset of $K^c$ because if $J=K^c$, then $K^c$ is
r.e and since $K$ is also $r.e$ so it follows that $K$ is recursive
which is a contradiction. Let $q\in K^c$ such that $q\notin J$. then
take $\sigma =\neg \chi(1)$ , then $q\in K^c$ says that $\sigma$ is
true in $\nat$ and $q\notin J$ says that $A\nvdash\sigma$ and the
theorem is proved.
\end{proof}
\section{A theorem of Tarski on definability}
\begin{definition}
A property $P$ is said to be {\bf very broad} if it is broad and
it is closed under complement.
\end{definition}
\begin{lemma}\label{use}
Let $P$ be a very broad property, $\alpha(v_1,\dots,v_r)$ be any
first order formula, then the set
$\{\overline{x}|\alpha(\overline{x})\text{ is true }\}$ satisfies
$P$
\end{lemma}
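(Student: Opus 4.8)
The plan is to argue by structural induction on the first order formula $\alpha$. Every formula in the language of arithmetic is logically equivalent to one built from atomic formulas using only negation $\neg$, conjunction $\wedge$, and existential quantification $\exists$, the remaining connectives and the universal quantifier being definable from these. Hence it suffices to establish the base case for atomic formulas and then show that the class of formulas whose defining set satisfies $P$ is closed under these three operations.

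For the base case, an atomic formula $\alpha(v_1,\dots,v_r)$ is an equation $p(\overline{v})=q(\overline{v})$ between two terms, i.e. between two polynomials (and, if the language carries $<$, an inequality, which is equally Diophantine via $a<b\Leftrightarrow\exists z\,(a+z+1=b)$). The set $\{\overline{x}:p(\overline{x})=q(\overline{x})\}$ is Diophantine, so by Theorem \ref{dio} it satisfies $P$. One may equivalently form the map $h(\overline{x})=(p(\overline{x}),q(\overline{x}))$, which satisfies $P$ by Corollary \ref{poly} and Theorem \ref{composition}, and observe that the set in question is $h^{-1}(\Delta)$ for the diagonal $\Delta=\{(a,a)\}$, whence Lemma \ref{inverse} applies.

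For the inductive step I would treat the three cases in turn. \emph{Conjunction}: I write the set defined by $\alpha\wedge\beta$ as an intersection, after first lifting the sets defined by $\alpha$ and by $\beta$ to relations on the common tuple of free variables by pulling them back along suitable projections $\proj$ via Lemma \ref{inverse}; closure under cross product and intersection, condition $(iii)$, then finishes this case. \emph{Existential}: the set defined by $\exists v_i\,\alpha$ is the image under a projection $\proj$ of the set defined by $\alpha$, so it satisfies $P$ directly by condition $(v)$. \emph{Negation}: the set defined by $\neg\alpha$ is the complement in $\nat^r$ of the set defined by $\alpha$, and here, and only here, I invoke the hypothesis that $P$ is \emph{very broad}, i.e. closed under complement. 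From $\neg,\wedge,\exists$ one then recovers disjunction via $\alpha\vee\beta\equiv\neg(\neg\alpha\wedge\neg\beta)$, implication, and the universal quantifier via $\forall v_i\,\alpha\equiv\neg\exists v_i\,\neg\alpha$, completing the induction.

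The only genuinely essential use of the strengthened hypothesis is the negation case: for a merely broad $P$ the positive fragment built from atomic formulas, $\wedge$, $\vee$ and $\exists$ already lands inside $P$ (this is in effect the Diophantine, equivalently recursively enumerable, world), but complementation can fail, which is exactly why bare broadness is insufficient and why Tarski's undefinability of truth will follow. I therefore expect no deep obstacle; the main technical nuisance is the bookkeeping needed to align free variables when combining subformulas under $\wedge$, which is handled uniformly by the projection functions $\proj$ and their preimages through Lemma \ref{inverse} and introduces no new idea.
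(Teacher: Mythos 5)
Your proposal is correct and follows essentially the same route as the paper: induction on the complexity of $\alpha$, with conjunction handled by pulling the two sets back along projections via Lemma \ref{inverse} and intersecting (condition (iii)), the existential quantifier handled by the projection image (condition (v)), and negation handled by complementation, which is the single place where ``very broad'' rather than ``broad'' is needed --- exactly as in the paper. The one genuine difference is the base case, and there your treatment is actually more careful than the paper's: the paper simply cites condition (ii), which literally covers only the graphs of $+$, $*$ and the identity, i.e.\ atomic formulas such as $v_1+v_2=v_3$, whereas a general atomic formula is an equation between compound terms such as $(v_1+v_2)\cdot v_3=v_4+v_4$. Your two alternatives --- observing that $\{\overline{x}:p(\overline{x})=q(\overline{x})\}$ is Diophantine and invoking Theorem \ref{dio}, or writing it as $h^{-1}(\Delta)$ with $h=(p,q)$ built from Corollary \ref{poly} and Theorem \ref{composition} and $\Delta$ the graph of the identity --- both close this small gap correctly, at the cost of invoking the heavier machinery of Theorem \ref{dio} where the paper meant to use only clause (ii). Your explicit reduction of $\vee$, $\rightarrow$ and $\forall$ to $\neg,\wedge,\exists$ is also stated rather than left implicit, which the paper's ``hence all formulas satisfy the lemma'' quietly assumes.
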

\begin{proof}
The proof is by straightforward induction on the complexity of formulas. Assume that $\alpha$ is an
atomic formula then the corresponding set satisfies $P$ by condition
$(ii)$. If $\alpha(v_{i\in I}),\beta(v_{k\in K})$ are two formulas
such that the sets $A=\{\overline{x}|\alpha(\overline{x})\text{ is
true }\},\ B=\{\overline{x}|\beta(\overline{x})\text{ is true }\}$
satisfy $P$ where $I,K$ are two finite subsets of $\nat$ then let
$l=|I|,s=|K|,j=|I\cup K|$. The two formulas
$\alpha\wedge\beta,\neg\alpha$ satisfy the lemma because the
corresponding sets are
$(\pi^{j}_{I'})^{-1}(A)\cap(\pi^{j}_{K'})^{-1}(B),A^c$
respectively, and those satisfy $P$ by lemma \ref{inverse} and $(iv)$,
where $I'=\{x-min(I)+1|x\in I\}$ the same for $K'$. The formula
$\exists v_k \alpha$ satisfies the lemma because its corresponding
set is $A$ if $k\notin I$ and it is $\pi^{l}_{m}(A)$ where
$I=\{x_1,\dots,x_m=k,\dots,x_l\}$ so it satisfies $P$ by lemma
\ref{inverse}. Hence all formulas satisfy the lemma.

\end{proof}
\begin{theorem} If $P$ is a very broad property, then every definable set satisfies $P$.
\end{theorem}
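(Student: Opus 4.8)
The plan is to recognize that this theorem is essentially an immediate corollary of Lemma \ref{use}, because the definition of a definable set is precisely that it has the form $\{\overline{x}|\alpha(\overline{x})\text{ is true in }\nat\}$ for some first order formula $\alpha$. The heavy lifting has already been carried out in that lemma, so the theorem requires only that we match the definitions.

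First I would fix an arbitrary definable relation $R\subseteq\nat^r$. By the definition of definability there exists a first order formula $\alpha(v_1,\dots,v_r)$ with $\overline{x}\in R\Leftrightarrow\alpha(\overline{x})\text{ is true in }\nat$. Rewriting this membership criterion as a set identity gives $R=\{\overline{x}|\alpha(\overline{x})\text{ is true in }\nat\}$, which exhibits $R$ as exactly the kind of set addressed by Lemma \ref{use}.

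Then I would apply Lemma \ref{use} to this formula $\alpha$. Since $P$ is assumed very broad, the lemma guarantees that the set $\{\overline{x}|\alpha(\overline{x})\text{ is true}\}$ satisfies $P$. But this set is identically $R$, so $R$ satisfies $P$, which is the claim. Since $R$ was an arbitrary definable set, every definable set satisfies $P$.

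The genuine obstacle does not lie in this theorem at all, but in Lemma \ref{use}, which I am entitled to assume: its inductive argument on formula complexity is where the extra ``very broad'' hypothesis is actually needed, since closure under complement is what handles the negation step, while the existential quantifier step draws on the projection closure condition $(v)$ together with Lemma \ref{inverse}. Thus the only subtlety in the present proof is the purely definitional observation that ``$R$ is definable'' unfolds verbatim into the hypothesis of Lemma \ref{use}; there is no further calculation to perform.
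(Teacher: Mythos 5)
Your proposal is correct and follows exactly the paper's own argument: unfold the definition of definability to exhibit the set as $\{\overline{x}\mid\alpha(\overline{x})\text{ is true}\}$ and then invoke Lemma \ref{use}. Your added remark about where the ``very broad'' hypothesis is genuinely used (negation via complement closure, quantifiers via projection and Lemma \ref{inverse}) is accurate but concerns the lemma, not this theorem.
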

\begin{proof}
Let $V$ be any definable set, then there exists a formula $\alpha$
such that $$\overline{x}\in V \Leftrightarrow \alpha(\overline{x})
\text{ is true }.$$ So by lemma \ref{use} it follows that $V$
satisfies $P$
\end{proof}
\begin{theorem}\label{def}
The set of all definable sets form a very broad property.
\end{theorem}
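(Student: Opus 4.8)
The plan is to verify the five defining conditions of a broad property for the class of definable sets, and then additionally verify closure under complement. This is directly analogous to Theorem \ref{smallest property}, where the Diophantine sets were shown to form the smallest strong property by checking the conditions one by one; here the work is arguably easier because first order logic gives us more expressive freedom than polynomial equations.

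First I would dispense with the individual clauses. For (i), the set $\nat^r$ is defined by the formula $v_1 = v_1$, and a singleton $\{c\}$ with $c = (c_1,\dots,c_n)$ is defined by $\bigwedge_{k=1}^{n} v_k = \underline{c_k}$, where $\underline{c_k}$ denotes the numeral for $c_k$. For (ii), the graphs of addition, multiplication, and identity are defined by the formulas $v_1 + v_2 = v_3$, $v_1 \cdot v_2 = v_3$, and $v_1 = v_2$ respectively, all of which are atomic. For (iii), if $A$ is defined by $\alpha$ and $B$ by $\beta$ (after renaming variables so the two formulas share no free variables), then $A \times B$ is defined by $\alpha \wedge \beta$ and, when the arities agree, $A \cap B$ is likewise defined by a conjunction; these are just first order conjunctions, so the resulting sets are definable. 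For (iv), the graph of $\pi^n_S$ is defined by the formula $\bigwedge_{k=1}^{l} v_{s_k} = v_{n+k}$, matching the polynomial used in the Diophantine case. For (v), if $V$ is defined by $\alpha(v_1,\dots,v_n)$, then $\pi^n_S(V)$ is defined by the formula obtained by existentially quantifying out the coordinates not in $S$, namely $\exists v_{j_1} \cdots \exists v_{j_{n-l}}\, \alpha$, where $\{j_1,\dots,j_{n-l}\} = \{1,\dots,n\} \setminus S$; first order formulas are closed under existential quantification, so this set is definable.

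Finally, for closure under complement, if $V$ is defined by $\alpha$ then its complement is defined by $\neg \alpha$, since $\overline{x} \in V^c$ iff $\alpha(\overline{x})$ is false in $\nat$ iff $\neg\alpha(\overline{x})$ is true in $\nat$. This establishes that the definable sets are closed under complement, which together with the broadness conditions gives that they form a very broad property.

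I do not expect any genuine obstacle here, since every condition reduces to exhibiting a defining formula and invoking the syntactic closure of first order logic under the relevant connectives and quantifiers. The only point requiring a little care is the bookkeeping of free variables in clause (iii) and clause (v): one must rename variables so that the cross product truly corresponds to a conjunction over disjoint variable sets, and one must correctly identify which variables to quantify away in the projection. This is precisely the kind of index manipulation already handled in Lemma \ref{use} (via the maps $I \mapsto I'$ and the projection $\pi^l_m$), so the cleanest route may simply be to note that the closure properties needed are exactly those established inductively in the proof of Lemma \ref{use}, and the only new ingredient beyond broadness is the complement clause handled by negation.
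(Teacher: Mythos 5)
Your proposal is correct and follows essentially the same route as the paper: both verify conditions (i)--(v) by exhibiting explicit defining formulas (conjunctions of equations for $\nat^r$, singletons, the arithmetic graphs, and $\proj$; conjunction with variable renaming for products and intersections; existential quantification over the omitted coordinates for projections) and then handle complement by negation. The only cosmetic difference is your closing observation tying the verification back to Lemma \ref{use}, which the paper does not make but which does not change the argument.
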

\begin{proof}
To prove that the definable sets form a very broad property, we
just need to check that the conditions of very broad properties are
satisfied.
\begin{enumerate}[(i)]
\item The formula $v_1=
v_1\wedge\dots\wedge v_r=v_r$ defines the set $\nat^r$ so it is 
definable and the formula $v_1=S^{c_1}\wedge\dots\wedge v_n=S^{c_n}$ defines the singleton
$c=(c_1,\dots,c_n)\in\nat^n$ hence the first condition is satisfied.
\item The addition, multiplication and identity functions are defined
by the formulas $v_1+v_2=v_3,v_1*v_2=v_3,v_1=v_2$ receptively, hence
the second condition is also satisfied.
\item Assume $A\subseteq\nat^r,B\subseteq\nat^s$ are definable, then
there exist two formulas $\alpha,\beta$ such that $\overline{x}\in A
\Leftrightarrow \alpha(\overline{x})\text{ is true
},\;\overline{x}\in B\Leftrightarrow \beta(\overline{x})\text{ is
true }$, then the formula $\alpha(v_1,\dots,v_r)\wedge
\beta(v_{r+1},\dots,v_{r+s})$ has the property that
$(\overline{x},\overline{y})\in A\times B \Leftrightarrow
\alpha(\overline{x})\wedge \beta(\overline{y}) \text{ is true }$, so
$A\times B$ is definable, if $r=s$ then the formula
$\alpha(v_1,\dots,v_r)\wedge \beta(v_1,\dots,v_r)$ defines $A\cap B$
so the third condition is satisfied.
\item The formula that defines the graph of the function $\proj$ is
$\bigwedge^{l}_{k=1}(v_{s_k}=v_{m+k})$ where $S=\{s_1,\dots,s_l\},1
\leq s_1<s_2<\dots<s_l\leq n$, so the fourth condition is satisfied.
\item Let $V\subseteq\nat^n$ be a definable set so there exists a formula $\alpha$
such that $\overline{x} \in A \Leftrightarrow \alpha(\overline{x}).$
So the the set $\proj(A)$ is definable by the formula $\exists
v_{i\in I}\alpha(v_1,\dots,v_n)$ where $S=\{s_1,\dots,s_l\},1 \leq
s_1<s_2<\dots<s_l\leq n,I=\{1,\dots,n\}\backslash S.$
\item Let $V$ be a definable set so there exist a formula $\alpha$
such that $\overline{x} \in A \Leftrightarrow \alpha(\overline{x}),$
then the set $V^c$ is definable by the formula
$\neg\alpha(\overline{x}).$
\end{enumerate}
Hence the set of all definable sets forms a very broad property.
\end{proof}
\begin{theorem}\label{mainresult22}
Let $P$ be a very strong property and $P\cap\P(\nat)$ is countable,
So let $D_1,\dots$ be the enumeration of all subsets of $\nat$ and
satisfies $P$, then the set $V=\{(a,b)|a\in D_b\}$ is not definable.
\end{theorem}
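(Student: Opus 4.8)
The plan is to argue by contradiction through a Cantor diagonalization, exactly as in Theorem \ref{mainresult}, but now exploiting the extra closure under complement that the hypothesis ``very broad'' supplies. Since $P$ is very broad, the preceding theorem (every definable set satisfies a very broad $P$) tells us that every definable set lies in $P$; hence it suffices to show that the universal relation $V=\{(a,b)\mid a\in D_b\}$ does \emph{not} satisfy $P$, from which its non-definability follows immediately. So I would suppose, for contradiction, that $V$ satisfies $P$ and work entirely within $P$.

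First I would extract the diagonal. The graph of the identity $\Delta=\{(a,b)\mid a=b\}$ satisfies $P$ by condition $(ii)$, so $V\cap\Delta$ satisfies $P$ by $(iii)$; concretely $V\cap\Delta=\{(a,a)\mid a\in D_a\}$. Projecting onto the first coordinate with $\pi^2_{\{1\}}$ and invoking $(v)$ then shows that the diagonal set $E=\{a\mid a\in D_a\}$ satisfies $P$. (Equivalently, one may write $E=\delta^{-1}(V)$ for the map $\delta(a)=(a,a)$ and combine Corollary \ref{poly} with Lemma \ref{inverse}.) The next step is the one that genuinely needs ``very broad'' rather than merely ``broad'': because $P$ is closed under complement, the set $W=E^{c}=\{a\mid a\notin D_a\}$ also satisfies $P$. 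Since $W\subseteq\nat$ satisfies $P$ and $(D_n)_{n}$ enumerates \emph{all} such subsets, there is an $n$ with $W=D_n$, whence $n\in D_n\Leftrightarrow n\in W\Leftrightarrow n\notin D_n$, a contradiction. Therefore $V$ does not satisfy $P$, and consequently $V$ is not definable.

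The computation is short once the machinery of the preceding sections is available, so I expect no arithmetical obstacle; the only real subtlety — and, I believe, the point the theorem is meant to isolate — is pinning down exactly where complement-closure is used. In Theorem \ref{mainresult} the complement was smuggled in for free through the two output values $0$ and $1$ of the function $g$, so that $g^{-1}(\{1\})$ produced the ``$n\notin D_n$'' set already from a merely broad $P$. Here $V$ is a bare relation carrying no built-in negation, so forming $W=E^{c}$ is impossible without the explicit complement-closure of a \emph{very} broad property. This is precisely the strengthening that upgrades the diagonal obstruction from ``$g$ is not recursive and does not satisfy $P$'' to the Tarski-style undefinability of the universal relation $V$, and the cleanest part of the write-up will simply be making that dependence transparent.
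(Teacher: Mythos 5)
Your proof is correct, and it reaches the same diagonal contradiction as the paper, but by a genuinely different decomposition. The paper argues syntactically, at the level of formulas: assuming $V$ definable, it passes to $V^c$ (negating the defining formula), identifies the two variables so that $\alpha(v_1,v_1)$ defines $Z=\{a\mid a\notin D_a\}$, and then -- invoking, implicitly, the theorem that every definable set satisfies a very broad $P$ -- places $Z$ in the enumeration to get the contradiction. You argue semantically, inside $P$ itself: you apply the definable-implies-$P$ theorem only once, to $V$, reducing the whole theorem to the claim that $V$ does not satisfy $P$, and then you manufacture $\{a\mid a\notin D_a\}$ out of $V$ using nothing but the closure conditions of a very broad property -- intersection with the identity relation via (ii) and (iii), projection via (v), and complement-closure. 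Your route buys a strictly stronger intermediate statement: the universal relation $V$ fails to satisfy \emph{any} very broad property with countable trace on $\P(\nat)$, which is the exact analogue of Theorem \ref{mainresult}'s conclusion that $g\notin P$; it also makes the use of complement-closure explicit, whereas in the paper's proof it is hidden (in the negation case of Lemma \ref{use}, which is what licenses the step from ``$Z$ is definable'' to ``$Z$ satisfies $P$''). What the paper's route buys is brevity: negation and diagonal substitution are one-line operations on formulas, so no auxiliary sets $\Delta$, $E$, $W$ need to be introduced. Your closing observation -- that in Theorem \ref{mainresult} the complement came for free from the two values of $g$ via $g^{-1}(\{1\})$, while here it must be supplied by the ``very broad'' hypothesis -- is accurate, and it is precisely the distinction between the two diagonalization theorems that the paper means to isolate.
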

\begin{proof}
Assume it is definable, since $P$ is closed under complement, then
$V^c$ is definable, hence the exist a formula $\alpha(v_1,v_2)$
defines $V^c$, so the formula $\alpha(v_1,v_1)$ defines the set
$Z=\{a|a\notin D_a\}$ but since $D_1,\dots$ form an enumeration of
all subsets of $\nat$ that satisfies $P$ but since $Z$ is a subset
of $\nat$ that satisfies $P$, so there exists $n$ such that $Z=D_n$
but by defintion of $Z$ $n\notin D_n\Leftrightarrow n\in Z$, so $Z$
cannot equal to $D_n$ which is a contradiction. Hence $V$ is not
definable.
\end{proof}
\begin{theorem}(Tarski definability theorem)

The set $\{(a,b)|\text{the statement } \alpha(v_1) \text{ with
G\"odel number $b$ is true at $a$ }\}$ is not definable.
\end{theorem}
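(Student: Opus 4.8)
The plan is to recognize this statement as the concrete instance of Theorem \ref{mainresult22} obtained by taking $P$ to be the property of being first-order definable. First I would verify the two hypotheses of Theorem \ref{mainresult22} for this choice of $P$. That $P$ is very broad (equivalently, ``very strong'' in the phrasing of that theorem) is exactly the content of Theorem \ref{def}. For the countability of $P\cap\P(\nat)$, I would observe that every definable subset of $\nat$ arises from a first-order formula $\alpha(v_1)$ in a single free variable, and that there are only countably many such formulas, since each is a finite string over a fixed countable alphabet. Hence there are at most countably many definable subsets of $\nat$, so $P\cap\P(\nat)$ is countable and both hypotheses hold.

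Next I would fix a G\"odel numbering of formulas and use it to manufacture the specific enumeration demanded by Theorem \ref{mainresult22}. For each $b\in\nat$, let $\alpha_b(v_1)$ be the one-variable formula with G\"odel number $b$, and set $D_b=\{a : \alpha_b(a)\text{ is true in }\nat\}$; for those $b$ that are not G\"odel numbers of one-variable formulas I would simply put $D_b=\emptyset$, which is harmless since $\emptyset$ is itself definable. Because every definable subset of $\nat$ is defined by some formula $\alpha(v_1)$, and that formula carries a G\"odel number, the family $(D_b)_{b\in\nat}$ lists every element of $P\cap\P(\nat)$ (with repetitions, which the theorem tolerates). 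Thus $(D_b)_b$ is an admissible enumeration of the definable subsets of $\nat$ indexed by all of $\nat$.

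The key observation is then that the truth set in the statement is literally the diagonal set $V$ of Theorem \ref{mainresult22} for this enumeration:
\[
\{(a,b) : \alpha_b(v_1)\text{ is true at }a\}=\{(a,b) : a\in D_b\}=V.
\]
Since Theorem \ref{mainresult22} asserts that $V$ is not definable, the truth set is not definable either, which is precisely Tarski's theorem.

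I expect the main obstacle to be bookkeeping rather than conceptual, since the diagonalization that drives the argument is already sealed inside Theorem \ref{mainresult22}. The real work lies in matching the informal phrase ``the statement $\alpha(v_1)$ with G\"odel number $b$ is true at $a$'' to a \emph{total} enumeration $b\mapsto D_b$ of all definable subsets of $\nat$. In particular I would need to ensure that the indexing by G\"odel number is surjective onto $P\cap\P(\nat)$, and that the convention chosen for non-formula values of $b$ (namely $D_b=\emptyset$) introduces no non-definable set into the list so that the diagonal $Z=\{a : a\notin D_a\}$ of Theorem \ref{mainresult22} is well defined for every $a\in\nat$; both points are easily arranged as above.
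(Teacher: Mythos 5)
Your proposal is correct and follows essentially the same route as the paper's own proof: take $P$ to be the class of definable sets (very broad by Theorem \ref{def}), enumerate the definable subsets of $\nat$ by G\"odel numbers as $D_b$, and invoke Theorem \ref{mainresult22} to conclude the diagonal set $\{(a,b)\mid a\in D_b\}$ is not definable. In fact you are somewhat more careful than the paper, which silently omits both the countability hypothesis of Theorem \ref{mainresult22} and the convention for indices $b$ that are not G\"odel numbers of one-variable formulas.
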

\begin{proof}
Take $P$ be the class of all definable sets, then by theorem
\ref{def} this is a very broad property. Take
$D_b=\{a\in\nat:\text{the formula with G\"odel number $b$ is true at
a}\}$ then this is an enumeration of all subsets of $\nat$ that
satisfy $P$, so by theorem \ref{mainresult22}, the set $\{(a,b)|a\in
D_b\}$ is not definable, but this is precisely the conclusion of the theorem.
\end{proof}
In the paper 
Lawvere \cite{l} shows the G\"odel's incompleteness theorem  and the truth-definition theory of 
Tarski are consequences of some very simple algebra in the cartesian-closed setting. 
Here is his main theorem.

\begin{theorem}  In any cartesian closed category, if there exists an object $A$ and a weakly point surjective morphism
$$g:A\to Y^A$$
Then $Y$ has the fixed point property
\end{theorem}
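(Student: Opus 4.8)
The plan is to run Lawvere's diagonal argument directly with the morphisms at hand, translating the informal ``self-application'' idea into an equality of points of $Y$. Throughout, write $1$ for the terminal object, $\mathrm{ev}:Y^A\times A\to Y$ for the evaluation morphism coming from cartesian closedness, $\Delta=\langle\mathrm{id}_A,\mathrm{id}_A\rangle:A\to A\times A$ for the diagonal, and abbreviate $\bar g=\mathrm{ev}\circ(g\times\mathrm{id}_A):A\times A\to Y$, so that on points $\bar g\circ\langle a,b\rangle$ is ``$g(a)$ applied to $b$''. Recall that $g$ being \emph{weakly point surjective} means: for every morphism $q:A\to Y$ there is a point $x_q:1\to A$ such that $\bar g\circ\langle x_q,a\rangle=q\circ a$ for all points $a:1\to A$. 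The goal is to show that an arbitrary endomorphism $t:Y\to Y$ admits a point $y:1\to Y$ with $t\circ y=y$.

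First I would build the ``diagonalized'' morphism that encodes the would-be paradoxical function. Given $t:Y\to Y$, set
\[ f \;=\; t\circ\bar g\circ\Delta \;:\; A\longrightarrow Y, \]
so that on a point $a$ we have $f\circ a = t\circ\bar g\circ\langle a,a\rangle$, i.e. ``$f(a)=t(g(a)(a))$''. This is the categorical incarnation of the diagonal combined with the twist by $t$, and it is the place where self-reference is manufactured.

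Next I would invoke weak point surjectivity of $g$ applied to this particular $f$: it yields a point $x:1\to A$ with $\bar g\circ\langle x,a\rangle = f\circ a$ for every point $a:1\to A$. The decisive move, the self-reference step, is to instantiate $a:=x$, obtaining $\bar g\circ\langle x,x\rangle = f\circ x = t\circ\bar g\circ\langle x,x\rangle$. Setting $y=\bar g\circ\langle x,x\rangle=\bar g\circ\Delta\circ x:1\to Y$, this reads $y=t\circ y$, so $y$ is the desired fixed point of $t$. Since $t$ was arbitrary, $Y$ has the fixed point property.

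I expect the only real obstacle to be bookkeeping rather than conceptual: one must be careful that weak point surjectivity delivers agreement of $\bar g\circ\langle x,-\rangle$ with $f$ only on \emph{global points} $1\to A$, not as an equality of morphisms $A\to Y$. This is exactly enough here, because the object we are chasing, a fixed point of $t$, is itself a global point $1\to Y$; so evaluating the surjectivity condition at the single point $a=x$ suffices and no stronger (``strong point surjective'') hypothesis is needed. The remaining care is purely in matching up the product projections so that $\langle x,x\rangle=\Delta\circ x$ and that $\bar g=\mathrm{ev}\circ(g\times\mathrm{id}_A)$ composes associatively with $\Delta$ and $t$.
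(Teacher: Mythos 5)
Your proof is correct, and it is essentially Lawvere's original diagonal argument carried out in full categorical dress. It is worth noting that the paper itself does not prove this theorem at all: it states it and defers to Lawvere's paper \cite{l}, and what it actually proves are the set-level analogues (the diagonalization theorem with $f=\alpha\circ g\circ\Delta$ for $g:T\times T\to 2$, and its contrapositive fixed-point form, whose proof is given only as ``like the above proof''). Your argument is precisely the categorical lift of that set-level construction: the composite $f=t\circ\bar g\circ\Delta$ plays the role of $\alpha\circ g\circ\delta$, weak point surjectivity replaces representability, and the self-application step $a:=x$ replaces evaluating at the representing index. What your version buys, beyond what the paper records, is the bookkeeping that makes the argument legitimate in an arbitrary cartesian closed category: the identification $\Delta\circ x=\langle x,x\rangle$, the factorization $\bar g\circ\langle x,a\rangle=\mathrm{ev}\circ\langle g\circ x,a\rangle$, and—most importantly—your closing observation that agreement on global points $1\to A$ is all that weak point surjectivity provides and all that the argument needs, since the fixed point being constructed is itself a global point $1\to Y$. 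That last remark is exactly the reason Lawvere's hypothesis can be ``weak'' point surjectivity rather than something stronger, and it is the one place where a careless translation from the set case could go wrong; you handled it correctly.
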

In his nice simple paper  
``A Universal Approach to Self-Referential Paradoxes, Incompleteness and Fixed Points", 
Yanofsky presents the categorial result of Lawvere in \cite{l} employing  only the notions of sets and functions, 
thereby avoiding the language of category theory, to bring out and make accessible as much as possible the content of Lawvere's paper. 
(However categoriol concepts lurk behind the scene.) Cantor's theorem, Russell's Paradox, the non-definability of satisfiability, Tarski's non-definability 
of truth and G\"odel's (first) incompleteness theorem are all shown to be paradoxical phenomena 
that merely result from the existence of a cartesian closed category satisfying certain conditions. 
We now, in the same spirit of this paper on mixing levels, 
demystify Lawvere's theorem, that demystified G\"odel's incompleteness theorem,
by forgetting about the catogerical stuff and formulating his theorem using just maps and sets,
and relating it to our Diagonalizion lemma. The idea is to use a single formalism to 
describe many seemingly unrelated diagonalization arguments.
In this part, we closely follow \cite{y}. 
Let $g:T\times T\to Y$. $g$ is said to be universal if for all $f:T\to Y$ there exists $t\in T$ such that $g(s,t)=s$ for all $s\in T$.
In this case we say that $f$ is representable by $g$ and $t$ or simply by $g$. 
\begin{theorem}\label{d} Let $\alpha:2\to 2$ be the function defined by $\alpha(0)=1$ and $\alpha(1)=0$, 
then for all set $T$ and all functions $g:T\times T\to 2$, there exists a function $f:T\to Y$, such
that for all $t\in T$ 
$$f(-)\neq g(-, t).$$
\end{theorem}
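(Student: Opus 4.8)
The plan is to run Cantor's diagonal argument in the stripped-down set-theoretic form of Yanofsky, exploiting the single decisive feature of $\alpha$: that it has \emph{no fixed point}. Indeed $\alpha(0)=1\neq 0$ and $\alpha(1)=0\neq 1$, so there is no $y\in 2$ with $\alpha(y)=y$. This is precisely the ``diagonalization'' ingredient, now freed of all G\"odel coding. (I note in passing that the codomain $Y$ in the displayed conclusion should be read as $2$, the same object on which $\alpha$ acts, so that $\alpha\circ g$ makes sense.)

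First I would write down the candidate diagonal function explicitly. Given $g:T\times T\to 2$, define $f:T\to 2$ by $f(s)=\alpha(g(s,s))$ for every $s\in T$. This is a legitimate total function, being the composite of the diagonal substitution $s\mapsto(s,s)$, the map $g$, and $\alpha$. The guiding intuition is that $f$ is engineered to disagree with the ``diagonal'' values of $g$, and hence with every row $g(-,t)$.

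Next I would establish the inequality $f(-)\neq g(-,t)$ for each fixed $t\in T$ by contradiction. Suppose that for some $t$ the two functions of the first argument coincide, that is, $f(s)=g(s,t)$ for all $s\in T$. Evaluating at the diagonal point $s=t$ gives $f(t)=g(t,t)$. On the other hand, the definition of $f$ gives $f(t)=\alpha(g(t,t))$. Combining the two yields $\alpha(g(t,t))=g(t,t)$, so $g(t,t)$ would be a fixed point of $\alpha$, contradicting the fact that $\alpha$ has none. Hence no such $t$ exists, and $f(-)\neq g(-,t)$ for every $t\in T$.

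I do not anticipate a genuine obstacle here: the entire content lies in the choice of the self-applied diagonal $g(s,s)$ combined with the fixed-point-free twist $\alpha$. The only point demanding care is bookkeeping on the object $Y$, as noted above. Read this way, the theorem is exactly the contrapositive of Lawvere's fixed-point theorem specialized to sets: because $2$ carries an endomorphism without a fixed point, no $g:T\times T\to 2$ can be universal (weakly point-surjective) in the sense defined just before the statement, since the constructed $f$ is a function $T\to 2$ represented by no row $g(-,t)$.
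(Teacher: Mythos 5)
Your proof is correct and follows exactly the paper's own route: both define the diagonal composite $f=\alpha\circ g\circ\delta$ with $\delta(t)=(t,t)$ and derive the disagreement from $\alpha$ having no fixed point. You merely spell out the verification that the paper compresses into ``clearly $f$ is as required,'' and you correctly note that the codomain $Y$ in the statement should be read as $2$ (a typo in the paper).
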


\begin{proof} Let $\delta:T\to T\times T$ be the function
$$t\mapsto (t,t).$$
Then let $f=\alpha\circ g\circ \Delta$, that is 
$$g(t)=\alpha(f(t,t)).$$
Then clearly $g$ is as required.
\end{proof}

\begin{enumarab}

\item Our diagonalization lemma is an instance of the above theorem. Let $D_1,D_2,\dots$ be an enumeration of subsets of
$\nat$ that satisfy $P$, let $g:\nat\times \nat\to 2$ be defined by
\begin{equation*}
g(n,x) =
\begin{cases}
0 & \text{if } x\in D_n,\\
1 & \text{if } x\notin D_n.
\end{cases}
\end{equation*}
For each $n$, $g(-,n)$ is the characteristic function of $D_n.$
$$g(-,n)=\chi_{D_n}$$
$f$ as constructed of the theorem is the characteristic function of 
$$V=\{n\in \nat: n\notin D_n\}$$
and $f$ {\it cannot} be represented by $g$.

\item Cantor's diagonalization lemma. Let $D_1,D_2,\dots$ be an enumeration of subsets of
$\nat$ that satisfy $P$, let $g:\nat\times \nat\to 2$ be defined by
\begin{equation*}
g(n,x) =
\begin{cases}
0 & \text{if } x\in D_n,\\
1 & \text{if } x\notin D_n.
\end{cases}
\end{equation*}
For each $n$, $g(-,n)$ is the characteristic function of $D_n.$
$$g(-,n)=\chi_{D_n}$$
$f$ as constructed of the theorem is the characterstic function of 
$$V=\{n\in \nat: n\notin D_n\}$$
and $f$ cannot be represented by $g$.

\item Russell's paradox \cite{y} p.369. Let $g: Sets\times Sets\to 2$ be defined as follows
\begin{equation*}
g(s,t) =
\begin{cases}
1 & \text{if } s\in t\\
1 & \text{if } s\notin t.
\end{cases}
\end{equation*}
Then $g$ is the characteristic function of those sets that are not members of themselves.

\item Grelling paradox \cite{y} p.370. There are some adjectives in English that describe themselves 
and some that do not. For example ``English" is English and French is not a 
French word. Call words that do not apply to themeselves heterological. Now is ``heterological" heterological.
Let $g: Adj\times Adj\to 2$ be defned as follows
\begin{equation*}
g(s,t) =
\begin{cases}
 1 & \text{if } s\text { describes } t\\
1 & \text{if } s\text{ does not describe }  t.
\end{cases}
\end{equation*}
Then $f$ constructed as above is the charcteristic function of a property of adjectives that cannot be described by any adjectives.

\item Related to the above is the Liar paradox \cite{s}, and the strong Liar paradox and The Barber's paradox.
The solution of the last, is that there is a village where  everyone who does not shave themselves is shaved by the barber
simply does not exist.  A paper formulating the famous limitative theorems of G\"odel, Turing, Tarski and Church in a unified ``Liar formalism" is \cite{s},
the main ideas of which will be recalled in a while.
We should mention that the above paradoxes can be overcome as above. A related paradox is Quine's paradox
which is :

``yields falsehood when appended to its own quotation" yields 
falsehood when appended to its own quotation.

A common solution to the Liar paradox is saying that there are sentences 
that are neither true nor false but are meaningless ``I am lying" would be such a statement.
Consider now the sentence
[``yields falsehhod or meaninglessness when appended to it own quatotation" 
yields falsehhod or meaninglessness when appended to it own quatotation] 
This can be resolved by enlarging the set $2$ to $3$ and defining the function $g:Sent\times Sent\to 3$.
as follows:
\begin{equation*}
g(s,t) =
\begin{cases}
 T & \text{if } s\text { describes } t\\
M &\text{ if it is meaningless for $s$ to describe $t$}\\
F & \text{if } s\text{ does not describe }  t.
\end{cases}
\end{equation*}
Then $f$ is the characteristic function of sentences that are neither false nor meaningless 
when describing themselves.

\end{enumarab}
Extending further the previous definition, it is easy to see that theorem \ref{d} 
remains valid if $2$ is replaced by any set $Y$ and $\alpha:Y\to Y$ has no fixed points. Now taking the contrapositive of this, we 
get 
\begin{theorem}\label{d} Let $Y$ be  a set and there exists a set $T$ and a function $g:T\times T$ 
such that for all $f:T\to Y$ are representable by $f$, then every $\alpha:Y\to Y$ has a fixed point
\end{theorem}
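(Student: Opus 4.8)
The plan is to prove this as the direct contrapositive of the generalized form of the preceding theorem, so that essentially no new construction is needed beyond the diagonal map $t \mapsto \alpha(g(t,t))$ already in hand. First I would restate the generalized theorem precisely in the form I intend to use: for \emph{any} set $Y$, any fixed-point-free $\alpha \colon Y \to Y$, any set $T$, and any $g \colon T \times T \to Y$, the function $f \colon T \to Y$ defined by $f(t) = \alpha(g(t,t))$ satisfies $f(-) \neq g(-,t)$ for every $t \in T$; that is, $f$ is not representable by $g$. This is immediate, since $f(t) = \alpha(g(t,t)) \neq g(t,t)$ whenever $\alpha$ has no fixed point, so $f$ and $g(-,t)$ already disagree at the single argument $t$.

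Next I would set up the contradiction. Assume the hypothesis: there is a set $T$ and a map $g \colon T \times T \to Y$ that is universal, i.e.\ every $f \colon T \to Y$ is representable by $g$ (for each $f$ there exists $t$ with $f = g(-,t)$). Let $\alpha \colon Y \to Y$ be arbitrary, and suppose toward a contradiction that $\alpha$ has no fixed point. Applying the generalized theorem to this particular $Y$, $\alpha$, $T$, and $g$ produces a function $f_0 \colon T \to Y$ --- namely $f_0(t) = \alpha(g(t,t))$ --- that is not representable by $g$.

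This directly contradicts universality of $g$, which demands that $f_0$ be representable. Hence the assumption fails and $\alpha$ must have a fixed point. Since $\alpha$ was arbitrary, every $\alpha \colon Y \to Y$ has a fixed point, which is the claim; in the language of the earlier sections, $Y$ has the fixed point property. I do not anticipate a genuine obstacle here: the content is logical rather than constructive, and the only point demanding care is the bookkeeping of quantifiers --- in particular, confirming that the single diagonal witness $f_0$ supplied by the generalized theorem is exactly the type of object that universality is required to represent, so that the contradiction is clean.
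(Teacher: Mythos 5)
Your proposal is correct and follows exactly the paper's intended route: the paper proves this theorem by noting that the earlier diagonal theorem (with $2$ replaced by an arbitrary $Y$ and $\alpha$ fixed-point-free, via $f = \alpha \circ g \circ \delta$) remains valid, and then taking the contrapositive, which is precisely your two-step argument. Your write-up is in fact more careful than the paper's own proof, which consists only of the remark ``Like the above proof'' together with a citation, but the content is the same.
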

\begin{proof} Like the above proof. See also \cite{y} Theorem 3
\end{proof}
Using this, we give a proof of the so called {\it Diagonalization} lemma in textbooks on G\"odel's incompletenes theorems. 
For a formula $\alpha(x), [\alpha(x)]$ denotes its G\"odel number.
We assume that we are working in a theory where there is a recursive function $D:\nat\to \nat$, such that 
$$D[\alpha(x)]=[\alpha([\alpha(x)])].$$ 
\begin{theorem} For any formula $\alpha(x)$ with $x$ as its only free variable, then there exists a sentence $\sigma$ such that
$$\vdash \sigma\leftrightarrow \alpha([\sigma]).$$
\end{theorem}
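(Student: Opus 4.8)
The plan is to mirror the proof of Theorem \ref{d} in the syntactic setting, letting the recursive diagonal function $D$ play the combined role of the diagonal map $\Delta\colon t\mapsto(t,t)$ followed by the substitution map $g$, and letting the given formula $\alpha(x)$ play the role of the endofunction $\alpha\colon Y\to Y$ whose fixed point we seek. The object that will turn out to be the fixed point $g(t_0,t_0)$ of that theorem is the sentence obtained by feeding a certain formula its \emph{own} G\"odel number. Concretely, I would first record that since $D$ is recursive it is representable in the theory: there is a formula $\delta_D(x,y)$ for which the theory proves $\delta_D(\overline{n},\overline{D(n)})$ together with uniqueness, $\vdash\forall y\,(\delta_D(\overline{n},y)\to y=\overline{D(n)})$, for every $n\in\nat$. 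This is what legitimizes writing $\alpha(D(x))$ as a genuine one–free–variable formula, namely $\exists y\,(\delta_D(x,y)\wedge\alpha(y))$.

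Next I carry out the diagonal construction. Given $\alpha(x)$, set $\theta(x):=\alpha(D(x))$ and let $k=[\theta(x)]$ be its G\"odel number. Define the sentence
\[
\sigma:=\theta\big([\theta(x)]\big)=\alpha\big(D(\overline{k})\big),
\]
i.e. $\theta$ applied to the numeral of its own code. This is exactly the self–application $\theta([\theta])$ that corresponds to $g\circ\Delta$ evaluated at the representing index. The defining property of $D$ now does the essential bookkeeping: $D(k)=D[\theta(x)]=[\theta([\theta(x)])]=[\sigma]$, so that $D$ sends the code of $\theta$ to the code of $\sigma$ itself.

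It then remains to extract the biconditional. Because $D(k)=[\sigma]$ is a true numerical equation and $D$ is representable, the theory proves $\delta_D(\overline{k},[\sigma])$ and the corresponding uniqueness statement, hence $\vdash D(\overline{k})=[\sigma]$ in the functional reading. Substituting provably equal terms inside $\alpha$ — a standard consequence of the equality axioms — yields $\vdash\alpha(D(\overline{k}))\leftrightarrow\alpha([\sigma])$. Since $\sigma$ is by definition the sentence $\alpha(D(\overline{k}))$, we conclude $\vdash\sigma\leftrightarrow\alpha([\sigma])$, which is the assertion of the theorem. I would close by noting that this $\sigma$ is precisely the fixed point $y_0=g(t_0,t_0)$ produced in Theorem \ref{d}: the index $t_0$ representing $\alpha\circ g\circ\Delta$ is here the code $k$ of $\theta=\alpha\circ D$, and diagonalizing at $k$ returns a value fixed by $\alpha$.

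The main obstacle is entirely contained in the representability step: one must be certain that the theory does not merely compute $D(k)=[\sigma]$ \emph{externally} but actually \emph{proves} the correct value together with its uniqueness, since without the proved uniqueness one cannot pass from the existential formula $\exists y\,(\delta_D(\overline{k},y)\wedge\alpha(y))$ to $\alpha([\sigma])$. This is exactly the point at which the abstract universality hypothesis of Theorem \ref{d} is cashed out: representability of all the relevant recursive functions in the object theory is what plays the part of ``every $f\colon T\to Y$ is representable by $g$,'' and it is the one ingredient that genuinely uses the arithmetical strength of the theory rather than pure diagonal combinatorics.
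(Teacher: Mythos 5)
Your proof is correct and takes essentially the same route as the paper: both diagonalize by forming $\theta(x)=\alpha(D(x))$ and taking $\sigma=\theta([\theta(x)])$, which is exactly the fixed point $g([g(x)])$ that the paper extracts from the abstract representability theorem (Theorem \ref{d}). The only difference is one of care, not of method: you make explicit the representability (with provable uniqueness) of $D$ needed to turn the numerical identity $D([\theta(x)])=[\sigma]$ into the provable biconditional $\vdash\sigma\leftrightarrow\alpha([\sigma])$, a step the paper's terse proof leaves implicit.
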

\begin{proof} \cite{y} Theorem 4. 
Let $Fm_n$ denote the set of formulas with free variables among the first $n$ and let $Sn$ denote the set of sentences.
Let $g:Fm_1\times Fm_1\to Sn$ be defined by 
$$(\mu(x), \beta(x))\mapsto \beta([\mu(x)]).$$
Let $\Phi_{\alpha}: Sn\to Sn$ be defned by 
$$\tau\mapsto \alpha([\tau])$$
Let $$f=\Phi\circ g\circ \Delta$$
Then $f$ is representable by $$g(x)=\alpha(D(x)).$$
So there is a fixed point of $\Phi_{\alpha}$ at $\sigma=g([g(x)]),$ which is the required.
\end{proof}
The best part of the schema worked out in theorem \ref{d} and exemplified above is that it shows 
that there are really no paradoxes. There are, rather, limitations. Paradoxes are ways of showing that violating a imitation leads to an inconsistency.
The Liar paradox shows that if you permit natural languages to talk about its own truthfulness 
then we will have inconsistencies. Russell's paradox shows that if we permit one to talk about any set without limitations, 
we will get an inconsistency. G\"odel's theorem shows that if we permit a system to talk about its truth then we get an inconsistency, however replacing 
truth by theoremhood
is a very rewarding thing to do, it gives the justly celebrated incompleteness theorems of G\"odel.
Though the provability relation is recursively enumerable, hence definable, Tarski showed that truth is not even definable.
G\"odel was smart enough to draw a line between theoremhood and truth, arriving at a positive limitative result, and not an inconsisteny. 
The above scheme in theorem \ref{d}
exhibits the inherent limitations of similar systems. The constructed or diagnolized out $f$ is a 
limitation that your system $g$ cannot deal with. If the system does deal with it, then there will be an inconsistency
(a fixed point $f$).

\section{Applications and Remarks}

\begin{enumarab}

\item Let $Prov(y,x)$ stand for $y$ is the G\"odel number of a proof of the statement whose G\"odel number is $x$.
Then let $$\alpha(x)=(\forall y)\neg Prov(y,x).$$
A fixed point for this $\alpha(x)$ in a consistent and $\omega$ consistent theory is a sentence that is equivalent to its own statement of unprovability.

\item Let $Neg:\nat\to \nat$ be defined for G\"odel numbers as follows
$$Neg[\alpha(x)]=[\neg \alpha(x)].$$
Let
$$\alpha(x)=(\forall y)(Prov(y,x)\to (\exists w)(w<y)\land Prov(w, Neg(x))).$$
A fixed point for this $\alpha(x)$ in a consistent theory is a sentence that is equivalent to its own statement of unprovability.
This s better known as Rosser's theorem.

\item Suppose that there exists a formula $T(x)$ that expresses the fact that $x$ is the G\"odel number
of a true theorem. Let $\alpha(x)=\neg T(x)$ , then a fixed point of $\alpha(x)$ does not do what is supposed to do. 
A theory in which diagonalization lemma holds cannot express its own theoremhood.

\item It is known that G\"odel's second incompleteness is obtained when the theory in question is strong enough to encode 
the proof of its first incompleteness theorem.
Such a theory cannot prove its consistency by methods formalizable in theory.
G\"odel's second incompleteness theorem says, officially, that given a set of axiom $A$  and rules by which you can deduce (prove) 
theorems from the axioms, if you can deduce all the laws of good old elementary arithmetic from the axioms $A$, then you can't prove that the axioms are 
consistent (i.e. that they aren't self-contradictory). 
George Boolos has  a great explanation of the second incompleteness theorem using only words of one syllable: 
First of all, when I say ``proved", what I will mean is ``proved with the aid of the whole of math. Now then: two plus two is four as you well know. 
And, of course, it can be proved  that two plus two is four (proved, that is, with the aid of the whole of math, 
as I said, though in the case of two plus two, of course we do not need the whole of math to prove that it is four). And, as may not be quite so clear, 
it can be proved that it can be proved that two plus two is four, as well. And it can be proved that it can be proved that it can be proved that two plus two is four. 
And so on . In fact, if a claim can be proved, then it can be proved that the claim can be proved. And {\it that } too can be proved. 
Now: two plus two is not five. And it can be proved that two plus two is not five. And it can be proved that it can 
be proved that two plus two is not five, and so on. 
Thus: it can be proved that two plus two is not five. Can it be proved as well that two plus two {\it is} five? 
It would be a real blow to math, to say the least, if it could. If it could be proved that two plus two is five, 
then it could be proved that five is not five, and then there would be no claim that could not be proved, and math would be a lot of bunk.
So, we now want to ask, can it be proved that it can't be proved that two plus two is five. 
Here's the shock: no, it can't. Or to hedge a bit: {\it if} it can be proved that it can't be proved that two plus two is five, {\it then}
it can be proved as well that two plus two is five, and math is a lot of bunk. In fact, if math is not a lot of bunk, then no claim of 
the form ``claim $X$ can't be proved" can be proved. 
So, if math is not a lot of bunk, then, though it can't be proved that two plus two is five, it can't be proved {\it that} it can't be proved that two plus two is five. 
By the way, in case you'd like to know: yes, it can be proved that if it can be proved that it can't be proved that two plus two is five, 
then it can be proved that two plus two is five. If this was a little too convoluted, check out 
G\"odel's Theorems, if you get it over there, I highly recommend coming back to read this--it's great fun. 
\item There is a difference between the fact that a sentence is provable and the fact that it is is provable that it is provable. 
This example can be found in Parikh's famous paper
{\it Existence and Feasability in Arithmetic}. We shall deal with following predicates:

$Prflength(m,x)$ $m$ is the length in symbols of a proof of a statement whose G\"odel number is $x$. This is clearly decidable.

$P(x)$ is $\exists yProv(x,y)$ there exists a proof of a statement whose G\"odel number is $x$.
and

$\alpha_n(x)$ is $\neg(\exists m<n Prflengh(m,x))$
Applying the diagonaliztion lemma to $\alpha_n(x)$ gives us a fixed point
$C_n$ that says ``I do not have a proof of myself shoretr than $n$"
There is a however a short proof of $P(C_n)$, as the reader is invited to show. Else he can consult \cite{y}.

\item We give a proof of Godel's second incompleteness theorem that fits into the diagonalization schema.
The proof is due to Jech.
That is we shall prove using a diagonalization argument that 
$ZF\nvdash Con(ZF).$ First note that $ZF$ proves the completeness theorem that is
$ZF\vdash  Con(ZF)\to ZF \text { has a model}$
Suppose, that $ZF$ proves that $ZF$ has a model.
Let $S$ be a finite set of axioms of $ZF$ enough to define models and satisfaction and 
contains one instance of the Comprehension axiom and proves that
$ZF$ has a model.
If $(M, E^M)$ $(N,E^N)$ are models of $ZF$ define $M<N$ iff there is some $(m, E^m)\in N$ such that $E^M=(E^m)^N=\{((x,y) N\models x E^{m} y\}$.
If $M<N$ then for sny sentence $\sigma$
$M\models \sigma$ iff $N\models m\models \sigma.$

The following can be easily proved.

(1) if $N\models S$ then there is $M<N$. 

(2) $<$ is a transitive relation.

Now if $\phi(x)$ is a formula, let $C_{\phi(x)}$ be the set of natural numbers defined by $\phi(x)$.
Let
$$D=\{\phi(x): \exists M(M\vdash S \& M\models \phi(x)\notin C_{\phi(x)}\}$$
Let $\theta(x)$ be the formulas 
$$\exists M(M\models S\& M\models x\notin C_x.$$
So $C_{\theta(x)}=D$.
Then $$S\vdash \theta(x)\in D \Leftrightarrow  \exists M(M\models S\& M\models \theta(x)\notin D.$$
Call $\theta(x)\in D$ $\theta$. Then $\theta$ plays the role of $\sigma$ in the Diagalinization  lemma.  
Then we can prove:

(3) if $N \models \theta$, then there is $M$ such that $M\models \neg \theta$.

(4) If $N\models \neg\theta$, then for every $M<N$ $M\models \theta$.

Now suppose $M_1\models S$. If $M_1\models \theta$, then by (3) there exists $M_2<M_1$ such that $M_2\models \neg\theta$.
Otherwise $M_2=M_1$. By (3) let $M_3<M_2$ then by (4) $M_3\models \theta$. By (3) let $M_4<M_3$ such that $M_4\models \neg\theta$. But by (2) 
$M_4< M_2.$
Contradiction.

\item 
There are other theorems in descriptive set theory which have the same flavour, that is diagonalizing out of a universal set.
In fact, the diagonalization trick also abounds in descriptive set theory and recursion theory. Here we give one example.
But first a few definitions \cite{j}. Let $X$ be a Polish space. A set $A\subseteq X$ is Borel if it belongs 
to the smallest $\sigma$ algebra of subsets of $X$ containing all closed sets.
For $\alpha<\omega_1$ we define the collections $\Sigma_{\alpha}^0$ and $\Pi_{\alpha}^0$ of Borel subsets of $X$:
$\Sigma_1^0=$ the collection of open sets.
$\Pi_1^0=$ the collection of all closed sets.
$\Sigma_{\alpha}^0=$ the collection of all sets
$A=\bigcup A_n$, where each $A_n$ belongs to $\Pi_{\beta}^0$ for some $\beta<\alpha$.
$\Pi_{\alpha}^0=$ the colletion of all complements of sets in $\Sigma_{\alpha}^0$.
 
\begin{theorem} For each $\alpha\geq 1$, 
there exists a set $U\subset N^2$ such that $U$ is $\Sigma_{\alpha}^0$ and for every $\Sigma_{\alpha}^0$ set $A$ in $N$, 
there exists $a\in N$ such that
$$A=\{x: (x,a\in U\}.$$
\end{theorem}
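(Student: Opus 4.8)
The plan is to prove this by transfinite induction on $\alpha<\omega_1$, constructing at each level a $\Sigma^0_\alpha$ set $U\subseteq N^2$ that is universal for the $\Sigma^0_\alpha$ subsets of the Baire space $N=\nat^{\nat}$. The universal $\Pi^0_\alpha$ set comes for free as the complement $N^2\setminus U$: if $B$ is $\Pi^0_\alpha$ then $B^c$ is $\Sigma^0_\alpha$, so $B^c=U_a=\{x:(x,a)\in U\}$ for some $a$, whence $B=(U^c)_a$. Thus the induction hypothesis at stage $\alpha$ provides both a $\Sigma^0_\beta$- and a $\Pi^0_\beta$-universal set for every $\beta<\alpha$. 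Two standard structural features of $N$ will be used throughout: it is second countable, with the canonical clopen basis $N_s=\{x:s\text{ is an initial segment of }x\}$ indexed by finite sequences $s\in\nat^{<\nat}$; and it is homeomorphic to its own countable power $N^{\nat}$ by a recursive bijection, so that each $a\in N$ decodes into columns $((a)_n)_{n\in\nat}$ with every projection $a\mapsto(a)_n$ continuous.

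For the base case $\alpha=1$ I would fix an enumeration $(s_n)_{n\in\nat}$ of $\nat^{<\nat}$ and set
$$U=\{(x,a)\in N^2:\exists n\,(a(n)>0\text{ and }s_{a(n)-1}\text{ is an initial segment of }x)\},$$
the value $0$ contributing nothing so that $a\equiv 0$ represents the empty open set. This $U$ is a countable union of clopen sets, hence open, i.e. $\Sigma^0_1$. Given any open $A\subseteq N$, second countability gives $A=\bigcup_k N_{t_k}$, and choosing $a$ so that its nonzero values enumerate the codes of the $t_k$ yields $U_a=A$, so $U$ is universal.

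For the inductive step I would write a $\Sigma^0_\alpha$ set as a countable union $\bigcup_n B_n$ with each $B_n$ of level $\Pi^0_{\beta_n}$, $\beta_n<\alpha$. If $\alpha=\beta+1$, let $V$ be the $\Pi^0_\beta$-universal set from the hypothesis and put
$$U=\{(x,a):\exists n\,(x,(a)_n)\in V\}.$$
For each fixed $n$ the set $\{(x,a):(x,(a)_n)\in V\}$ is the preimage of $V$ under the continuous map $(x,a)\mapsto(x,(a)_n)$, hence $\Pi^0_\beta$; so $U$ is a countable union of $\Pi^0_\beta$ sets and therefore $\Sigma^0_{\beta+1}$. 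Universality follows by choosing codes $c_n$ with $B_n=V_{c_n}$ and then $a$ with $(a)_n=c_n$. For a countable limit $\alpha$ I would fix an increasing sequence $\gamma_n\nearrow\alpha$ cofinal in $\alpha$, take $V_n$ to be the $\Pi^0_{\gamma_n}$-universal set, and set
$$U=\{(x,a):\exists n\,(x,(a)_n)\in V_n\},$$
again a countable union of sets of level $\Pi^0_{\gamma_n}$ with $\gamma_n<\alpha$, hence $\Sigma^0_\alpha$.

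The main obstacle, and really the only delicate point, is the bookkeeping at limit stages. One must invoke the fact that the Borel hierarchy is increasing, so that $\Pi^0_\beta\subseteq\Pi^0_\gamma$ whenever $\beta<\gamma$, in order to re-index an arbitrary decomposition $\bigcup_m C_m$ with $C_m$ of level $\Pi^0_{\delta_m}$, $\delta_m<\alpha$, into one whose $n$-th piece lies at level at most $\gamma_n$; only then do the finitely chosen universal sets $V_n$ suffice to represent every $\Sigma^0_\alpha$ set. The accompanying technical point, verifying that the packing homeomorphism $N\cong N^{\nat}$ is continuous so that preimages remain inside the pointclass $\Pi^0_{\gamma_n}$, is routine once the basis of $N$ is fixed. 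Everything else is the familiar ``universal object'' construction that feeds the diagonalization arguments used throughout this paper.
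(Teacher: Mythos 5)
Your proof is correct, and it is essentially the paper's own proof: the paper gives no argument here beyond citing Jech's Lemma 11.2, and that lemma is proved by exactly your induction --- a coded-basis universal open set at level $1$, then at higher levels countable unions of continuous preimages $(x,a)\mapsto(x,(a)_n)$ of the lower-level $\Pi^0_\beta$-universal sets (obtained as complements of the $\Sigma^0_\beta$-universal ones), with the monotonicity re-indexing you flag at limit stages. Nothing to correct.
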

\begin{proof} \cite{j} Lemma 11.2
\end{proof}
\begin{theorem} For every $\alpha\geq 1$, there is a set $A\subseteq N$ that is $\Sigma_{\alpha}^0$ 
but not $\Pi_{\alpha}^0$.
\end{theorem}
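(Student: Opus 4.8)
The plan is to diagonalize out of the universal set provided by the preceding theorem, exactly in the spirit of Theorem~\ref{mainresult} and Theorem~\ref{mainresult22}. Fix $\alpha \geq 1$ and let $U \subseteq N^2$ be a $\Sigma_\alpha^0$ set that is universal for $\Sigma_\alpha^0$ subsets of $N$, as furnished by the previous theorem. First I would form the diagonal set
$$A = \{x \in N : (x,x) \in U\},$$
which is the preimage of $U$ under the continuous diagonal map $\delta : N \to N^2$, $\delta(x) = (x,x)$, i.e. $A = \delta^{-1}(U)$.

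Next I would argue that $A$ is itself $\Sigma_\alpha^0$. The substance here is that the pointclass $\Sigma_\alpha^0$ is closed under preimages by continuous functions; granting this, $A = \delta^{-1}(U)$ is $\Sigma_\alpha^0$ at once. I expect this closure property to be the one genuinely technical point, and I would establish it by transfinite induction on $\alpha$ following the inductive definition of the hierarchy recalled above: continuous preimages of open sets are open (base case $\alpha = 1$); continuous preimages commute with countable unions and with complementation, so the $\Pi_\beta^0$ levels are handled simultaneously; and the successor/limit step uses the representation $A = \bigcup_n A_n$ with each $A_n \in \Pi_\beta^0$ for some $\beta < \alpha$, whence $\delta^{-1}(A) = \bigcup_n \delta^{-1}(A_n)$ lands in $\Sigma_\alpha^0$ by the inductive hypothesis.

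Finally I would diagonalize. Suppose toward a contradiction that $A$ is also $\Pi_\alpha^0$. Then its complement $A^c = \{x : (x,x) \notin U\}$ is $\Sigma_\alpha^0$, so by the universal property of $U$ there exists $a \in N$ with
$$A^c = \{x \in N : (x,a) \in U\}.$$
Comparing the two descriptions of membership yields, for every $x \in N$, the equivalence $(x,x) \notin U \Leftrightarrow (x,a) \in U$. Substituting $x = a$ gives $(a,a) \notin U \Leftrightarrow (a,a) \in U$, which is absurd. Hence $A$ is $\Sigma_\alpha^0$ but not $\Pi_\alpha^0$, as required. The only delicate ingredient is the closure of $\Sigma_\alpha^0$ under continuous preimages used to locate $A$ in the correct pointclass; the concluding self-referential contradiction is precisely the Cantor diagonal step that recurs throughout the paper, now applied one level up in the Borel hierarchy.
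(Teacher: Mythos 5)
Your proposal is correct and follows exactly the paper's own argument: form the diagonal set $A=\{x:(x,x)\in U\}$ from the universal $\Sigma_{\alpha}^0$ set, then derive the contradiction $(a,a)\in U \Leftrightarrow (a,a)\notin U$ from the assumption that $A$ is $\Pi_{\alpha}^0$. The only difference is that you justify the step the paper dismisses with ``clearly'' (that $A$ is $\Sigma_{\alpha}^0$) by an explicit induction on closure under continuous preimages, which is a welcome addition rather than a divergence.
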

\begin{proof} The proof is a typical diagonalization out of a universal set.
Let $U\subseteq N^2$ be a universal $\Sigma_{\alpha}^0$ set. Consider the set
$$A=\{x: (x,x)\in U\}$$
Clearly $A$ is a $\Sigma_{\alpha}^0$ set. If $A$ 
were also $\Pi_{\alpha}^0$, then its complement would be $\Sigma_{\alpha}^0$ and there would be an 
$a$ such that
$$A=\{x: (x,a)\notin U\}$$
Let $x=a$, then we obtain a contradiction.
\end{proof}
Similar results with the same techniques hold for the heirarchy of projective sets \cite{j} p.144. 
In an anlogous manner, diagonalization arguments are also used to distinguish between levels in the hierarachy of non computable sets.
A list of self referential paradoxes, including Rice's theorem, Richard's paradox, an oracle $B$ such that $P^B\neq NP^B$, time travel paradoxes,
Lob's paradox, the recrusion theorem, and Von Neuman's self reproducing machines are given in \cite{y}. 
It seems that the key idea of the diagonalization theorem \ref{d} 
is the existence of a recursive $D:\nat\to \nat$ which is the key to the fact that the system can talk about itself. 
A natural question is: can we find more abstract key properties in systems that make 
self reference possible in wider contexts. This is indeed done in \cite{s}. 
Indeed the approach in \cite{y} concentrates on diagonalistion, while that in \cite{s} concentrates
on self-reference. Both papers complement each other, each concentarting on one angle of the incompleteness
theorems. Now we review the main concepts in \cite{s}.
\begin{definition} $S=(E, S, F, N, g, s)$ is an abstract formal system if
\begin{enumroman}
\item $\emptyset\neq S\subseteq F\subseteq E$ and $N$ is an arbitrary set such that $F\cap N=\emptyset$.
$E$ is the set of expressions, while $F,S$ and $N$ are called the set of formulas, sentences and names respectively.
\item $g$ is a one to one function from $F$ to $N$. $g$ is the naming function. If $H\subseteq F$ then $\bold H$ will denote $g(H).$
\item $s$ is a mapping from $F\times N$ into $S$. $s$ is the substitution in $S$. We denote $s(\phi, n)$ by $\phi[n].$
\end{enumroman}
\end{definition}
Let $T\subseteq S$ be arbitrary. We consider $T$ to be the true sentences in $S$ , and sentences outside $T$
are the false ones. The result of substituting a linguistic phrase $q$ for the variable in $s$ by $s(q)$. 
The name of any expression will be denoted by
$[e]$ and we abbreviate the phrase
``The new sentence obtained by substituting the name of the sentence $x$ for the variable in it is false" by $p$.
Then we have the well known Findlay's paradox: 

``the new sentence obtained by substituting the name of the sentence 
`the new sentence obtained by substituting $x$ for the variabe in it is false' for the variable in it is false". 

This can be symbolically 
expresssed by  $f=p[p].$ In fact this is just another way of saying $``x$ is heterological" is heterological.
The formal name of the formula $x$ is $g(x)$ and the result of substituting this formal name in $x$ is $x[g(x)]$. Now $p$ corresponds to the sentence
$$x[g(x)]\notin T,$$ 
which we denote by $\bar{p}$. Then $\bar{p}$ is not an expression belonging to the formal system $S$, rather it is
{\it about} S. It is not a statement but indeed a metastatement. Though $\bar{p}$ does not belong to $S$, 
there might exist elements of $S$ that can in some way represent it, that is play the role of $\bar{p}$ within $S$.
So we are seeking a formula $\pi$, to the same effect as $\bar{p}$ which means that $\pi$ and $\bar{p}$ 
have to be true at the same time, they have to be true for exactly the
same formulas, that is by definition of $\bar{p}$  
for any $$\phi\in F,  \phi[g(\phi)]\in T\text { iff } \phi[g(\phi)]\notin T,$$
equivalently,
for any $$n\in N, \phi[n]\in T\text { iff } g^{-1}(n)[n]\notin T.$$
To formalize this, we need
\begin{definition} Let $S$ be a formal system and let $T\subseteq S$ , $X\subseteq N$ be arbitrary. 
We say that $X$ is $T$ representable in $S$, if there is a $\phi\in F$
such that for every $n\in N$,
$$\phi[n]\in T\leftrightarrow n\in X.$$
The formula $\phi$ is said to $T$ represent $X$ in $S$.
\end{definition}
Now if $\pi$ represents the set $\{n\in N: g^{-1}(n)[n]\notin T\}$, then we consider $\pi$ as a representative of $\bar{p}$ within $S$.
The ``Formalized liar" is the formal sentence $\pi[g(\pi)]$. This is true iff it is false implying that the formula $\pi$ does not exists. Formally
\begin{theorem}(Liar theorem) Let $S$ be an abstract formal system and $T\subseteq S$. 
Then the set $\{n\in N: g^{-1}(n)[n]\notin T\}$ is not $T$ representable. 
\end{theorem}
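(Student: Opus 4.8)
The plan is to argue by contradiction using the diagonal substitution that lies at the heart of every Liar-type paradox. Suppose the set $X=\{n\in N: g^{-1}(n)[n]\notin T\}$ were $T$-representable. Then by the definition of $T$-representability there is a formula $\pi\in F$ such that for every $n\in N$ we have $\pi[n]\in T$ if and only if $n\in X$, that is, $\pi[n]\in T\leftrightarrow g^{-1}(n)[n]\notin T$. This equivalence holds \emph{uniformly} over all names $n$, and the whole argument consists in choosing the right name to substitute.

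First I would record the only two structural facts needed: that $g:F\to N$ is injective, so that $g^{-1}$ is well defined on the range of $g$ and satisfies $g^{-1}(g(\phi))=\phi$ for every $\phi\in F$; and that $\pi$ is itself a formula, so that $g(\pi)\in N$ is a legitimate name to plug in. The diagonalization step is then to take $n=g(\pi)$ in the representability equivalence. Since $g^{-1}(g(\pi))=\pi$, the right-hand side $g^{-1}(n)[n]\notin T$ collapses exactly to $\pi[g(\pi)]\notin T$, while the left-hand side reads $\pi[g(\pi)]\in T$. We thus obtain $\pi[g(\pi)]\in T\leftrightarrow\pi[g(\pi)]\notin T$, which is impossible. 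The sentence $\pi[g(\pi)]$ is precisely the ``Formalized liar'' anticipated in the discussion preceding the theorem, and the contradiction shows that no representing $\pi$ can exist, so $X$ is not $T$-representable.

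The argument is deliberately content-free: it uses nothing about $T$ beyond its being a subset of $S$, nothing about $F$, $N$, $s$ beyond the bare signature of an abstract formal system, and no recursion-theoretic or logical machinery. This is exactly the point stressed earlier — the self-reference is built into the substitution $\phi\mapsto\phi[g(\phi)]$, and once a formula purports to represent the diagonal set $X$, feeding it its own name manufactures the paradox. I expect no genuine obstacle in the computation; the only step requiring care is the bookkeeping identity $g^{-1}(g(\pi))=\pi$, which is where the injectivity of the naming function $g$ enters, and which is the formal counterpart of a sentence's ability to refer to itself through its own name.
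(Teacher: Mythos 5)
Your proof is correct and is precisely the argument the paper intends: its official proof just says ``Formalizing the above argument,'' where the preceding discussion defines the Formalized Liar $\pi[g(\pi)]$ and notes it is true iff false, which is exactly your substitution $n=g(\pi)$ together with the bookkeeping identity $g^{-1}(g(\pi))=\pi$. No difference in approach worth noting.
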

\begin{proof} Formalizing the above argument.
\end{proof}
Notice that $g^{-1}(n)[n]$ is just the formal version of the phrase ``the new sentence obtained by substituting $n$ 
into the sentence named by it", in other words $g^{-1}(n)[n]$ is exactly the formalization 
of {\it self reference}, it is roughly  the sentence whose G\"odel number is $n$ at the value $n$, clearly pointing to itself. 
Due to its central role in the paradox
it will appear in any formulation of the Liar paradox. We shall 
examine those formal systems that are strong enough, 
meaning that they contain enough formulas needed to express self reference within the system and are therefore amenable
to Cantor's diagonalization argument, reincarnated by G\"odel, proving undecidability.
\begin{definition} 
Let $S$ be an abstract formal system and let $A\subseteq S$, $X\subseteq N$. 
We say that $X$ is $A$ representable in $S$ if there a $\phi\in F$ such that for every $n\in N$
$$\phi[n]\in A\leftrightarrow n\in X.$$
$S$ is self referential with respect to $A$ if for any $X\subseteq N$,
the set
$$\{n\in N: g(g^{-1}(n)[n])\in X\}$$
is $A$ representable whenever $X$ is $A$ representable.
\end{definition}
These systems are {\it introspective}, indeed they can talk about themselves. 
Now we can formulate and indeed prove:
\begin{theorem}(generalized Liar theorem)
Let $S$ be an abstract formal system, $A\subseteq F$, $B\subseteq S.$ Assume that $S$ is self referential with respect to $B$
and suppose that $\sim A$ is $B$ representable. Then there is a $\lambda\in S$ such that
$$\lambda\in B \text { iff } \lambda\notin A$$
which implies
$$S\cap A\neq B.$$
\end{theorem}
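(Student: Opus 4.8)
The plan is to run the classical Liar/diagonal argument inside the abstract system, using self-referentiality to internalise the diagonal substitution and using the $B$-representability of $\sim A$ to manufacture a representing formula whose self-application is the desired sentence $\lambda$. First I would introduce the diagonal map on names $d:N\to N$, $d(n)=g(g^{-1}(n)[n])$, defined on those names that actually name a formula; since $g^{-1}(n)[n]=s(g^{-1}(n),n)\in S\subseteq F$, the value $d(n)$ is again the name of a genuine formula. The role of $d$ is that $d(n)$ names the sentence obtained by feeding the name $n$ into the formula it names, i.e. it is exactly the formal incarnation of self-reference already isolated after the Liar theorem.

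Next I would translate the two hypotheses into a single representability statement. Because $\sim A$ is $B$-representable and $S$ is self-referential with respect to $B$, the self-referentiality clause applied with $X=\sim A$ yields that the set $Y:=\{n\in N: g(g^{-1}(n)[n])\in\,\sim A\}$ is itself $B$-representable. The key simplification is to unwind $\sim A$: for a name of the form $m=d(n)$ we have $m\in\,\sim A$ iff $g^{-1}(m)\notin A$, and since $g^{-1}(m)=g^{-1}(n)[n]$ by injectivity of $g$, the set collapses to $Y=\{n\in N: g^{-1}(n)[n]\notin A\}$. Hence there is a formula $\psi\in F$ with $\psi[n]\in B\leftrightarrow g^{-1}(n)[n]\notin A$ for every $n\in N$.

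Then I would diagonalise by substituting the name of $\psi$ into $\psi$ itself, setting $\lambda:=\psi[g(\psi)]$, which lies in $S$ because $s$ maps into $S$. Instantiating the representability equivalence at $n=g(\psi)$ and using $g^{-1}(g(\psi))=\psi$ gives $\lambda\in B\leftrightarrow g^{-1}(g(\psi))[g(\psi)]\notin A\leftrightarrow \lambda\notin A$, which is precisely the asserted equivalence $\lambda\in B$ iff $\lambda\notin A$. Finally the inequality $S\cap A\neq B$ follows by a one-line case split: $\lambda\in S$, and depending on whether $\lambda\in A$ or $\lambda\notin A$, the sentence $\lambda$ witnesses membership in exactly one of $S\cap A$ and $B$, so the two sets cannot coincide.

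I expect the main obstacle to be bookkeeping rather than conceptual, concentrated in the reduction $\{n: d(n)\in\,\sim A\}=\{n: g^{-1}(n)[n]\notin A\}$. One must fix the reading of $\sim A$ as a subset of $N$, namely the names whose named formula lies outside $A$, and check that the partiality of $g^{-1}$ causes no trouble: every name actually invoked is of the form $g(\phi)$ for an honest formula $\phi$ (either $\phi=g^{-1}(n)[n]\in S$ or $\phi=\psi\in F$), so $g^{-1}$ is always applied where it is defined and the injectivity of $g$ makes each step reversible.
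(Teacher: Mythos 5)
Your proof is correct and takes essentially the same route as the paper's own proof: apply self-referentiality with $X=\sim A$ (read as a set of names) to obtain a formula $\psi$ (the paper's $\pi$) that $B$-represents $\{n\in N: g^{-1}(n)[n]\notin A\}$, then diagonalize by setting $\lambda=\psi[g(\psi)]$ and read off the biconditional. Your additional bookkeeping about the partiality of $g^{-1}$ and the identification of $\sim A$ with $\sim\mathbf{A}=N\setminus g(A)$ is sound and merely makes explicit what the paper leaves implicit.
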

\begin{proof} \cite{s} Theorem 1. Since $\sim \bold A$ is $B$ representable and $S$ is self-referential with respect to $B$, 
$\{n\in N: g(g^{-1}(n)[n])\notin A\}$
is also $B$ representable. There is a $\pi\in F$ such that for any $n\in N$, $\pi[n]\in B$ iff $g(g^{-1}(n)[n])\notin A$.
Let $m=g(\pi)$ and $\lambda=\pi[g(\pi)].$ Then, on the one hand, $\lambda\in S$. On the other hand, $\lambda=\pi[g(\pi)]\in B$ iff 
$\pi[m]\in B$ iff $g(g^{-1}(m)[m]\notin A$
iff $g^{-1}g(\pi)[g(\pi])\notin A$ iff $\lambda=\pi[g(\pi)]\notin A$. Then we have a $\lambda\in S$ such that
$\lambda\in B$ iff $\lambda\notin A$.
\end{proof}
Now to study formal logical systems we need further elaboration. For example we need to formulate negation.
That, following \cite{s}, is what we do in the next definition.
\begin{definition} Let $S$ be an abstract formal system.
\begin{enumarab}
\item Let $P,T\subseteq S$. We say that $L=(S,P,T,')$ is a logical system if $'$ is a mapping of $F$ into $F$
such that 
\begin{enumroman}
\item $\phi\in S$ iff $\phi'\in S$
\item $\phi\in T$ iff $\phi'\notin T$, whenever $\phi\in S$ and for every $n\in N$
\item $\phi[n]\in T$ iff $\phi'[n]\notin T.$
\end{enumroman}
$P$ and $T$ are called the set of provable and true sentences of $S$, and $\phi'$ is the negation of $\phi$.
\item Let $P,T\subseteq S$. 
\begin{enumroman}
\item $L$ is consistent if $P\cap P'=\emptyset$
\item  $L$ is complete if $P\cup P'=S$
\item $L$ is sound if $P\subseteq T$.
\end{enumroman}
\end{enumarab}
\end{definition}
Now the main theorems of G\"odel, Turing and Church on limitative results are obtained from the following 
\begin{theorem} Let $L$ be a logical system.
\begin{enumroman}
\item (a) Suppose that $L$ is self referential with respect to $P$ and $\bold P$ is $P$ representable. If $L$ is consistent, then $L$ is incomplete.

(b) Suppose that $L$ is self referential with respect to $T$ and $\bold P$ is $T$ representable. If $\L$ is sound, then $T\sim P\neq \emptyset$.

\item Suppose that $L$ is self referential with respect to $T$. Then
$\bold T$ is not $T$ representable.

\item Suppose that $L$ is self referential with respect to $P$.
Then
$\neg \bold P$ is not $P$ representable
\end{enumroman}
\end{theorem}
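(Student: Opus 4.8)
The plan is to derive all four statements from the generalized Liar theorem proved above, by choosing the two parameters $A,B$ among $P$ and $T$ and using the negation map $'$ to pass between a set and its complement. The one device used repeatedly is that, for $\phi\in F$ and $n\in N$, property $(iii)$ of a logical system gives $\phi'[n]\in T \iff \phi[n]\notin T$; hence if a formula $\psi$ $T$-represents a set $X\subseteq N$ (so $\psi[n]\in T \iff n\in X$), then $\psi'$ $T$-represents the complement $\sim X$. This ``negation flips $T$-representability'' is the engine behind $(i)(b)$ and $(ii)$, whereas $(iii)$ and the first half of $(i)(a)$ are read off directly, without the flip.

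For $(ii)$ I would argue by contradiction: assume $\bold T$ is $T$-representable. Since $L$ is self referential with respect to $T$, apply the generalized Liar theorem with $B=T$ and $A=T$; the required hypothesis that $\sim\bold A=\sim\bold T$ is $T$-representable is exactly what the negation flip supplies from the assumption on $\bold T$. The theorem then yields $\lambda\in S$ with $\lambda\in T \iff \lambda\notin T$, a contradiction, so $\bold T$ is not $T$-representable. Part $(iii)$ is even more direct: interpreting $\neg\bold P=\sim\bold P$ and assuming it is $P$-representable, apply the generalized Liar theorem with $B=A=P$ (here self referentiality with respect to $P$ is the hypothesis and $\sim\bold A=\sim\bold P$ is representable by assumption), obtaining $\lambda\in P \iff \lambda\notin P$; equivalently the theorem gives $S\cap P\neq P$, impossible since $P\subseteq S$.

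For $(i)(b)$ I would take $B=T$ and $A=P$. From $\bold P$ being $T$-representable the negation flip gives that $\sim\bold P$ is $T$-representable, that is, $\sim\bold A$ is $B$-representable, and $L$ is self referential with respect to $T=B$; so the generalized Liar theorem produces a G\"odel sentence $\lambda\in S$ with $\lambda\in T \iff \lambda\notin P$. Soundness $P\subseteq T$ now finishes the argument: if $\lambda\in P$ then $\lambda\in T$, whence $\lambda\notin P$, a contradiction, so $\lambda\notin P$ and therefore $\lambda\in T$; thus $\lambda\in T\sim P$ and $T\sim P\neq\emptyset$.

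The real difficulty is $(i)(a)$, which is G\"odel's first theorem in its consistency rather than soundness form. Here only $\bold P$ is assumed $P$-representable and $L$ is self referential with respect to $P$; using the closure built into self referentiality I would first manufacture the diagonal formula $\pi$ with $\pi[n]\in P \iff g^{-1}(n)[n]\in P$ (apply self referentiality to the $P$-representable set $\bold P$) and pass to its fixed point, obtaining a sentence that speaks of its own provability, and then read off from consistency $P\cap P'=\emptyset$ that this sentence is not provable. The obstruction is the \emph{second} half, namely that its negation is also unprovable so that $P\cup P'\neq S$: the map $'$ flips $T$-representability, not $P$-representability, so one cannot convert $\bold P$ into a $P$-representation of $\sim\bold P$, and indeed part $(iii)$ forbids this outright. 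I expect this to be the main obstacle, and would resolve it by the standard route, either strengthening ``consistent'' to $\omega$-consistent and inserting a Rosser-style proof-comparison predicate, or, assuming $'$ is involutive and that passage to the negation of a name is itself representable, showing that completeness together with consistency would force $\sim\bold P$ to be $P$-representable, contradicting part $(iii)$ and hence yielding incompleteness.
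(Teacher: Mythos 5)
Your treatment of (i)(b), (ii) and (iii) coincides exactly with the paper's proof: the paper simply lists the substitutions into the generalized Liar theorem, namely $A=P$, $B=T$ for (i)(b), $A=T$, $B=T$ for (ii), and $A=P$, $B=P$ for (iii), and your ``negation flips $T$-representability'' device (property (iii) of a logical system) is precisely what licenses the first two of these applications. So three of the four items are correct and essentially identical to the paper's argument.

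For (i)(a) there is a genuine gap, located exactly where you flagged it, but the resolution is different from both of your proposals. The paper uses no $\omega$-consistency, no Rosser-style proof comparison, and no involutivity or substitution-compatibility assumptions on $'$; its proof of (i)(a) is one more substitution into the generalized Liar theorem, namely $A=\sim P$ and $B=P'$, i.e., one diagonalizes against the set of \emph{refutable} sentences rather than against $P$ itself. With this choice the Liar sentence $\lambda$ satisfies $\lambda\in P'$ iff $\lambda\notin\sim P$, that is, $\lambda\in P'$ iff $\lambda\in P$; consistency ($P\cap P'=\emptyset$) then forces $\lambda\notin P$ and $\lambda\notin P'$ simultaneously, so $\lambda\in S\sim(P\cup P')$ and $L$ is incomplete in one stroke. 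Note what this application actually consumes: self-referentiality with respect to $P'$ and $P'$-representability of $\bold P$ (this is how the result reads in Ser\'eny \cite{s}, from which the theorem is taken); the statement in the present paper, with $P$ in place of $P'$, appears to have dropped the primes, and the obstruction you correctly diagnosed (the flip in property (iii) works for $T$, not for $P$) is a symptom of reading those hypotheses literally. Your two fallback routes do not recover the theorem: the first ($\omega$-consistency plus Rosser) proves a different statement, and the second imports assumptions absent from the definition of a logical system (an involutive $'$ with $(\phi[n])'=\phi'[n]$). The missing idea is simply the choice $B=P'$.
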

\begin{proof}  \cite{s}. We list the substitutions needed to obtain the different items from the Generalized Liar theorem:
\begin{enumroman}
\item (a) $A=\sim P, B=P'$

(b) $A=P, B=T$
\item $A=T, B=T$
\item $A=P, B=P$.
\end{enumroman}
\end{proof}
The statements of the theorem above can be considered to be the abstract versions of theorems of G\"odel, Tarski and Church 
concerning a single logical system. Indeed in its first main item consisting of the abstract syntactical versions 
of G\"odel's incompleteness theorem describing the relation between provability and refutability and that between provability
and truth. (a) says that strong enough systems that are self referential, if consistent, cannot be complete. They can construct sentences 
asserting their own unprovability.
(b) says that truth is strictly larger than provability, and this is due to the expressive strength of the proof system employed, not to its weakness,
there are always true statements that cannot be provable, no matter how we strenghten our system as long as we remain 
in the realm of the recursive.
This is expressed in our first version of G\"odel's theorem.
It contains, as its second and third main items, a generalization of Tarski undefinability of truth and 
a statement that can be seen as an abstract variant of Church's thesis 
on the undecidability of provability restricted to a single logical system. For more details the reader is referred to \cite{s}.

Other diagonalization arguments include Ascoli's theorem and the Baire category theorem in topology. 
The latter is strongly related to Martin's axiom and indeed
to forcing in set theory. 
One then could say that generic forcing extensions are formed by getting a ``diagonalized" out generic set, giving  a larger model.
This set is determined by the forcing conditions.

\end{enumarab}

However there are paradoxes which diagonalization cannot deal with.
Consider Richard's paradox. There are many sentences in English that describe real number between $0$ and $1$. 
Let us lexicographically order all English sentences. Using this
order, we can select all those English sentences that describe real numbers between $0$ and $1$.  
Call such a sentence a ``Ricahrd sentence". So we have the cncept of he $m$-th Richard sentence".
Consider the set $10=\{0,1,2,\ldots 9\}.$ 
and the function $\alpha:10\to 10$ defined as $\alpha(i)=9-i$. This Consider $f:\nat\times \nat\to 10$ defined as
$f(n,m)$ is the $n$th decimal number of the $m$th Richard sentence. Now consider $g:\nat\to 10$ constructed as $g=\alpha\circ f \circ \delta$.
Then $g$ describes a real number between $0$ and $1$ and yet for all $m\in \nat$ we have
$$g(-)\neq f(-,m).$$
This is because $g$ has no fixed point. However there is a Ricahrd sentence that describes this number:

`` $x$ is ther real number between $0$ and $1$ whose $n$th digit is nine minus the $n$ digit of the number described by the
$n$-Richard sentence"

Another example is the following: Let $F$ be the set of formulas and $g:F\to \nat$ be a Godel numbering. Let $[n]$ denote $g(n)$.
Note that $g^{-1}[n]$ is just $n$. Let $\sigma$ be any sentence. Consider the formula $\alpha(x)=g^{-1}x\to \sigma$.
A fixed point for $\alpha(x)$ is a $C$ such that
$\vdash C\leftrightarrow \alpha([C])\equiv g^{-1}[C]\to \sigma=C\to \sigma$.
So $C$ is equaivalent to $C\to \sigma$. By some reflection, we get that  $\sigma$ is always true. 

In \cite{l} an explanation is given.

\section{Generalization to other domains}

From now on we concentrate on Hilbert's tenth for various (recursive) rings. 
Let $\mathcal{R}$ be an integral domain. Then we can see that most of the
pervious proof dealng with Diaphantine definitions doesn not really  depend on the properties of $\nat$, so we
reformulate the theorems (concerning the property $P$) in any integral domain $\mathcal{R}$.\par
\begin{definition}
A {\bf property} $P$ is defined as a subset of
\;$\bigcup^{\infty}_{r=1}\P(\mathcal{R}^r).$ We say that a set $X$
satisfies $P$ if $X \in P$ and a function $f:\mathcal{R}^r\to
\mathcal{R}^s$ satisfies $P$ if it is as a relation satisfies $P$,
sometimes we say a set is $P$ to instead of a set  satisfies $P$.
\end{definition}
\begin{definition}
Let $n \in \nat,S \subseteq \{1,\dots,n\}$ and let $l$ be the
cardinality of $S$.We define the function
$\pi^n_S:\mathcal{R}^n\to\mathcal{R}^l$ as follows
$\proj(x_1,\dots,x_n)=(x_{s_1},\dots,x_{s_l})$ where
$S=\{s_1,\dots,s_l\},1 \leq s_1<s_2<\dots<s_l\leq n.$
\end{definition}
\begin{definition}
Let $P$ be a property. Then we say that $P$ is {\bf strong} if it
satisfies the following conditions.
\begin{enumerate}[(i)]
\item The sets $\mathcal{R}^r,\{x\}$ satisfy $P$ for every $r\in \mathcal{R},x\in
\mathcal{R}^r$.
\item The function $+,*,:\mathcal{R}^2\to\mathcal{R}$, $\text{identity}:\mathcal{R}\to\mathcal{R}$ satisfy
$P$.
\item The property $P$ is closed under cross product and
intersection.
\item The function $\proj$ satisfies $P$ for every $n\in \nat,S\subseteq
\{1,\dots,n\}$.
\item For every $n\in \nat,S\subseteq \{1,\dots,n\},V\subseteq
\mathcal{R}^n$ satisfies P , $\proj(V)$ satisfies $P$.
\end{enumerate}
\end{definition}
\begin{examples}
(1)\quad Take $P=\bigcup^{\infty}_{r=1}\P(\mathcal{R}^r).$\\
(2)\quad Take $P=\{D : D \text{is a Diophantine set in}
\mathcal{R}\}$
\end{examples}
\begin{lemma} \label{inverse2}
If $P$ is a strong property and $f:\mathcal{R}^r\to\mathcal{R}^s$
satisfies $P$, then the inverse image of every $P$ set is $P$.
\end{lemma}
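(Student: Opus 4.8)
The plan is to reproduce the proof of Lemma \ref{inverse} essentially verbatim, replacing $\nat$ by $\mathcal{R}$ throughout, since that argument never used any arithmetic of $\nat$ but only the set-theoretic meaning of the inverse image together with the closure conditions of a strong property. Concretely, I would fix a set $V\subseteq\mathcal{R}^s$ that satisfies $P$, treat $f$ as a relation, i.e. as a subset of $\mathcal{R}^{r+s}$, and claim that
$$f^{-1}(V)=\pi^{r+s}_{\{1,\dots,r\}}\bigl(f\cap(\mathcal{R}^r\times V)\bigr).$$

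To verify this identity I would chase elements exactly as in the $\nat$ case: $\overline{x}\in f^{-1}(V)$ iff $f(\overline{x})\in V$, iff there is $\overline{y}$ with $\overline{y}=f(\overline{x})$ and $\overline{y}\in V$, iff there is $\overline{y}$ with $(\overline{x},\overline{y})\in f$ and $(\overline{x},\overline{y})\in\mathcal{R}^r\times V$. This last condition says precisely that $(\overline{x},\overline{y})\in f\cap(\mathcal{R}^r\times V)$ for some $\overline{y}$, which is exactly the statement that $\overline{x}$ lies in the projection $\pi^{r+s}_{\{1,\dots,r\}}$ of that set, since this projection sends $(\overline{x},\overline{y})$ to its first $r$ coordinates $\overline{x}$. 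This establishes the displayed equality.

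Once the identity is in hand, the conclusion follows from the closure conditions: $\mathcal{R}^r$ satisfies $P$ by $(i)$ and $V$ satisfies $P$ by hypothesis, so $\mathcal{R}^r\times V$ satisfies $P$ by the cross-product clause of $(iii)$; intersecting with $f$ (which satisfies $P$ by hypothesis) keeps us inside $P$ by the intersection clause of $(iii)$; and finally applying the projection preserves $P$ by $(v)$. Hence $f^{-1}(V)$ satisfies $P$, and since $V$ was an arbitrary $P$ set the lemma follows.

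There is essentially no obstacle here, and that is precisely the point the section is making: nothing in this argument uses that the ground ring is $\nat$ or even that $\mathcal{R}$ is an integral domain. The integral-domain hypothesis becomes relevant only elsewhere, for instance when one needs that a singleton is cut out by a single polynomial via the fact that a product of elements vanishes iff one of its factors does; for the purely set-theoretic manipulation of inverse images, the strong-property axioms $(i)$, $(iii)$, $(v)$ suffice on their own.
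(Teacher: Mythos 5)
Your proof is correct and matches the paper's approach exactly: the paper's own proof of Lemma \ref{inverse2} simply asserts the identity $f^{-1}(V)=\pi^{r+s}_{\{1,\dots,r\}}(f\cap(\mathcal{R}^r\times V))$ "as we did for the case $\R=\nat$" and then invokes the closure conditions, which is precisely the argument you spelled out. Your added remark that the integral-domain hypothesis plays no role here (only in the Diophantine-specific verifications) is also accurate.
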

\begin{proof}
If $P$ is a strong property and $f:\mathcal{R}^r\to\mathcal{R}^s,V$
is $P$,then we can prove as we did for the case $\R=\nat$ that
$$f^{-1}(V)=\pi^{r+s}_{\{1,\dots,r\}}(f\cap(\mathcal{R}^r\times V)).$$ It
follows that $f^{-1}(V)$ satisfies $P$.
\end{proof}
\begin{theorem}({\bf composition})\label{composition2}
Let $P$ be a strong property,and the functions\newline
$h_1,\dots,h_n:\R^k\to\R$ are $P$, the function $g:\R^n\to\R^s$ is
$P$, then the function $f=g(h_1,\dots,h_n):\R^k\to \R^s$ is $P$.
\end{theorem}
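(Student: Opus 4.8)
The plan is to imitate verbatim the proof of the composition theorem over $\nat$ (Theorem \ref{composition}), since that argument used only the abstract closure conditions (i)--(v) together with the inverse-image lemma, and never any arithmetic peculiar to $\nat$. First I would reduce to the case $n=2$ for notational simplicity; the general case follows by the same bookkeeping with more indices. Treating $f$, $h_1$, $h_2$, $g$ as relations, I would unwind the definition of composition: $(\overline{x},\overline{y})\in f$ holds iff there exist $t_1,t_2\in\R$ with $(\overline{x},t_1)\in h_1$, $(\overline{x},t_2)\in h_2$ and $(t_1,t_2,\overline{y})\in g$.

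The crux is to rewrite this existential condition as a single Boolean combination of $P$-sets followed by one projection. As in the $\nat$ case, the vector $(\overline{x},t_1,t_2,\overline{y})\in\R^{k+s+2}$ lies in the intersection
$$(h_1\times\R^{s+1})\cap\big((\pi^{k+s+2}_{\{1,\dots,k,k+2\}})^{-1}(h_2)\big)\cap(\R^k\times g),$$
so that
$$f=\pi^{k+s+2}_{\{1,\dots,k,k+2,k+3,\dots,k+s+2\}}\Big((h_1\times\R^{s+1})\cap\big((\pi^{k+s+2}_{\{1,\dots,k,k+2\}})^{-1}(h_2)\big)\cap(\R^k\times g)\Big).$$
Each constituent is $P$: the products $h_1\times\R^{s+1}$ and $\R^k\times g$ are $P$ by (i) and (iii), while the inverse image of $h_2$ under the projection is $P$ by Lemma \ref{inverse2} together with (iv).

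Finally I would invoke closure under intersection (iii) to see that the whole intersection is $P$, and closure under projection (v) to conclude that $f$ itself is $P$, completing the argument exactly as over $\nat$.

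The only genuine work is the index bookkeeping hidden in the projection functions $\pi^n_S$: one must verify that the coordinates selected by the outer projection correctly recover $(\overline{x},\overline{y})$ while discarding the witnesses $t_1,t_2$, and that the middle projection pulls $h_2$ back onto the $(\overline{x},t_2)$ coordinates. I expect no obstacle beyond this combinatorics, precisely because the integral-domain hypothesis on $\R$ enters only through conditions (i)--(v), which are postulated here in exactly the same form as over $\nat$; the proof is therefore insensitive to the replacement of $\nat$ by an arbitrary integral domain.
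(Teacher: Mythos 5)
Your proposal is correct and follows exactly the paper's own proof: the paper likewise reduces to $n=2$, writes $f=\pi^{k+s+2}_{\{1,\dots,k,k+2,k+3,\dots,k+s+2\}}\bigl((h_1\times\R^{s+1})\cap((\pi^{k+s+2}_{\{1,\dots,k,k+2\}})^{-1}(h_2))\cap(\R^k\times g)\bigr)$, and concludes via Lemma \ref{inverse2} and conditions (iii), (iv), (v). Your added observation that the argument never uses anything about $\R$ beyond the postulated closure conditions is exactly the point the paper makes by referring back to the case $\R=\nat$.
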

\begin{proof}
For simplification, take $n=2$. By definition; to prove that $f$
satisfies $P$ we treat $f$ as a relation and prove that it satisfies
$P$, as we have done with the case $\R=\nat$ we can prove that
$$f=\pi^{k+s+2}_{\{1,\dots,k,k+2,k+3,\dots,k+s+2\}}((h_1\times
\R^{s+1})\cap
((\pi^{k+s+2}_{\{1,\dots,k,k+2\}})^{-1}(h_2))\cap(\R^k\times g)).$$
Since $h_1,h_2,g$ satisfy $P$, we have by lemma \ref{inverse2} and
by $(iii),(iv),(v)$, that $f$ satisfies $P$
\end{proof}
\begin{corollary}\label{poly2}
If $P$ is a strong property, then every polynomial with coefficients
in $\R$ in any number of variables satisfies P.
\end{corollary}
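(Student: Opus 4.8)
The plan is to repeat, almost verbatim, the proof of Corollary \ref{poly} for the case $\R=\nat$, now over the integral domain $\R$, using the composition Theorem \ref{composition2} and the strong-property axioms (i)--(v) in place of their $\nat$-versions. The argument splits into three parts --- constants, coordinate functions, and an induction on the way a polynomial is built up --- and at no stage does it invoke any feature peculiar to $\nat$.

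First I would handle constant functions. For $c\in\R^r$ the constant map $f:\R^s\to\R^r$, $f(x)=c$, equals as a relation the set $\R^s\times\{c\}$, and this satisfies $P$ by axiom (i) (which supplies both $\R^s$ and the singleton $\{c\}$) together with closure under cross product in (iii); hence $f$ satisfies $P$ by the definition of a function satisfying $P$. Next, the coordinate functions: for $1\le i\le k$ the map $\R^k\to\R$, $x\mapsto x_i$, is exactly $\pi^k_{\{i\}}$ and satisfies $P$ by axiom (iv), while the identity $\R\to\R$ is directly available from (ii).

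Finally I would run an induction on the number of ring operations needed to express a polynomial $p:\R^k\to\R$ with coefficients in $\R$. The base cases, namely a single constant or a single coordinate, are precisely the two parts just established. For the inductive step I would write $p$ as $q+r$ or as $q\cdot r$, where $q,r$ are polynomials satisfying $P$ by the induction hypothesis; since $+,*:\R^2\to\R$ satisfy $P$ by axiom (ii), the composite ${+}(q,r)$, respectively ${*}(q,r)$, satisfies $P$ by Theorem \ref{composition2}. This closes the induction and establishes the corollary.

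I expect no genuine difficulty, since this is simply the integral-domain transcription of an argument already carried out over $\nat$. The one point worth a moment's attention is that the coefficients now range over $\R$ and are furnished as $P$-sets only through the singletons $\{x\}$, $x\in\R^r$, of axiom (i); this is exactly why that axiom is phrased for $\R$-singletons, and it is essentially the sole place where the passage from $\nat$ to $\R$ is felt. The argument thereby confirms the earlier remark that the Diophantine-definability reasoning does not depend on any special property of $\nat$.
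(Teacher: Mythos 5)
Your proof is correct and takes essentially the same route as the paper's: constant functions via axioms (i) and (iii), identity/coordinate maps, and then induction on the structure of the polynomial using the composition Theorem \ref{composition2}. If anything, your explicit use of the coordinate projections $\pi^k_{\{i\}}$ from axiom (iv) as base cases is a slight sharpening of the paper's sketch, which (following the $\nat$ case) mentions only the constant and identity functions, even though coordinate functions are what the multi-variable induction actually needs.
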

\begin{proof}
The proof is the same as the proof in $\nat$. One proves that the
constant and the identity functions are $P$ then it follows by
induction that every polynomial with coefficients in $\R$ in any
number of variables satisfies $P$.
\end{proof}
\begin{theorem}\label{dio2}
Let $P$ be any strong property, then every Diophantine set satisfies $P$.
\end{theorem}
\begin{proof}
Let $r\in\R,D\subseteq\R^r$ be a Diophantine set, then by definition
of Diophantine sets there exist a positive integer $m$ and a
polynomial $Q(x_1,\dots,x_r,y_1,\dots,y_m)$ such that
$$\overline{x}\in D \Leftrightarrow \exists\;y_1,\dots,y_m,\text{
such that }Q(\overline{x},y_1,\dots,y_m)=0.$$ So $\overline{x}\in D
\Leftrightarrow \exists\; \overline{y} \text{ such that }
(\overline{x},\overline{y},0)\in Q$ which is equivalent to
$\overline{x}\in
\pi^{r+m+1}_{\{1,\dots,r\}}(Q\cap(\R^{r+m}\times\{0\})).$ So
$$D=\pi^{r+m+1}_{\{1,\dots,r\}}(Q\cap(\R^{r+m}\times\{0\}))$$ and
hence by corollary \ref{poly2},$(iii),(v)$ it follows that $D$
satisfies $P.$
\end{proof}
\begin{theorem}\label{smallest property2}
The set of all Diophantine sets form strong property.
\end{theorem}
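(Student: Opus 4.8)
The plan is to verify directly the five defining conditions of a strong property for the class of Diophantine subsets of $\R^r$, mirroring the proof of Theorem~\ref{smallest property} but now reading every polynomial as having coefficients in $\R$ and every existential quantifier as ranging over $\R$. Recall that $D\subseteq\R^r$ is Diophantine when $\overline{x}\in D\Leftrightarrow \exists\,y_1,\dots,y_m\ Q(\overline{x},\overline{y})=0$ for some polynomial $Q$ over $\R$, and that by the earlier corollaries all polynomials over $\R$ are already available to us.

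First I would dispose of the parts that are completely uniform in the underlying domain, since they never require combining two separate equations. The set $\R^r$ itself is cut out by the zero polynomial. For condition $(ii)$, the graphs of $+,*$ and the identity are defined by $x+y-z$, $x\cdot y-z$ and $x-y$, and these make sense in any integral domain because only the ring operations are used. For condition $(v)$, if $D=\{\overline{x}:\exists\overline{y}\,Q(\overline{x},\overline{y})=0\}$, then $\proj(D)$ is obtained by existentially quantifying the discarded coordinates together with $\overline{y}$, so it is again defined by the single equation $Q=0$ with more existential variables; this step carries over to any ring unchanged.

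The crux, and the place where the passage from $\nat$ to an arbitrary integral domain is genuinely delicate, is the reduction of a \emph{conjunction} of polynomial equations to a single Diophantine condition. This is precisely what the remaining cases demand: the singleton $\{c\}$ in $(i)$ is $x_1=c_1\wedge\cdots\wedge x_n=c_n$; both the cross product and the intersection in $(iii)$ are conjunctions of the two defining equations, in disjoint respectively shared variables; and the graph of $\proj$ in $(iv)$ is the conjunction of the equalities $x_{s_k}=x_{m+k}$. Over $\nat$ or $\mathbb{Z}$ one reduces such a conjunction using the sum-of-squares identity
\[
P=0\ \wedge\ Q=0\ \Longleftrightarrow\ P^2+Q^2=0 ,
\]
whose validity rests on the fact that in $\mathbb{Z}$ a sum of squares vanishes only when each summand does. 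I expect this to be the main obstacle, because the identity can fail in a general integral domain; for instance over $\R=\mathbb{C}$ one has $P^2+Q^2=(P+iQ)(P-iQ)$, which vanishes well off the locus $P=Q=0$. Hence the five conditions are \emph{not} automatic for every $\R$.

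There are two clean ways I would resolve this. The first is to adopt for $\R$ the definition of a Diophantine set as the projection of the solution set of a \emph{finite system} $Q_1=\cdots=Q_k=0$; this agrees with the single-equation definition over $\mathbb{Z}$ but is the robust notion over a general ring, and under it every conjunction is free, so $(i)$, $(iii)$ and $(iv)$ follow by simply concatenating the systems and the theorem holds over every integral domain. The second, if one insists on a single defining equation, is to restrict to those integral domains (the formally real ones, such as ordered domains) in which a sum of squares vanishes only trivially, where the $\nat$ proof transfers verbatim. Either way the treatment of $(ii)$ and $(v)$ is unaffected, and the entire content of the theorem, together with its dependence on the choice of $\R$, is concentrated in the conjunction step handled above.
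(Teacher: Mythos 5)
Your proposal is correct in substance, but it takes a genuinely different --- and in fact more careful --- route than the paper at precisely the step you single out as the crux. The paper checks the same five conditions, but it handles every conjunction with a \emph{product}: it uses $\prod_{k=1}^n(x_k-c_k)$ for the singleton in (i), $P\cdot Q$ for both $A\times B$ and $A\cap B$ in (iii), and $\prod_{k=1}^{l}(x_{s_k}-x_{m+k})$ for the graph of $\proj$ in (iv), invoking the integral-domain hypothesis. That is exactly backwards: in an integral domain a product vanishes iff some factor vanishes, so these polynomials define \emph{disjunctions} of the conditions, not conjunctions (the slip is inherited from the proof of Theorem \ref{smallest property}, where the intended polynomial is a sum of squares). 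Your proposal replaces the product by the sum-of-squares identity, correctly isolates that identity as the only place where the structure of $\R$ enters, and correctly observes that it fails over a general integral domain such as $\mathbb{C}$; indeed over $\mathbb{C}$ no single equation can define $\{(0,0)\}\subseteq\mathbb{C}^2$ existentially, since the projection of a nonempty hypersurface in $\mathbb{C}^{2+k}$ to $\mathbb{C}^2$ has dimension at least one, so condition (i) genuinely fails and the theorem, read with the one-equation definition of Diophantine, is false over $\mathbb{C}$. Your two repairs are the standard ones: define Diophantine sets over $\R$ as projections of finite systems of equations (then the theorem holds over every integral domain, all conjunctions being handled by concatenation), or keep a single equation and restrict to formally real (e.g.\ ordered) domains, where sums of squares vanish only trivially. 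Note that the paper itself tacitly relies on your point later: Lemma \ref{lemma_1}, asserting that $x^2-d'y^2=0$ iff $x=y=0$ in $\mathbb{Z}[\sqrt{d}]$, is exactly the ring-specific device for collapsing a system into one equation, and the proof of Theorem \ref{theorem_2} uses it repeatedly. What your route buys is a true statement with an honest account of its dependence on $\R$; what the paper's route buys, as written, is brevity at the cost of a step that does not hold up.
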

\begin{proof}
To prove that the Diophantine sets in $\R$ form a strong property,
we just need to check the conditions.
\begin{enumerate}[(i)]
\item The zero polynomial proves that the set $\R^r$ is
Diophantine and the polynomial
$p(x_1,\dots,x_n)=\prod^n_{k=1}(x_k-c_k)$ defines the singleton
$c=(c_1,\dots,c_n)\in\R^n$.
\item the addition, multiplication and identity functions are defined
by the polynomials $+(x,y,z)=x+y-z,*(x,y,z)=x*y-z,i(x,y)=x-y$
receptively.
\item Assume $A\subseteq\R^r,B\subseteq\R^s$ are Diophantine, then
there exist two polynomials $P,Q$ such that $\overline{x}\in A
\Leftrightarrow \exists\;y_1,\dots,y_m$ such that
$P(\overline{x},y_1,\dots,y_m)=0$, $\overline{x}\in B\Leftrightarrow
\exists\;y_1,\dots,y_s$ such that
$Q(\overline{x},y_1,\dots,y_l)=0.$ Then since $\R$ is an integral
domain so the polynomial $PQ$ has the property that
$(\overline{x},\overline{y})\in A\times B \Leftrightarrow \exists
k_1,\dots,k_m,z_1,z_l \text{ such that }
P(\overline{x},\overline{k})\times
Q(\overline{y},\overline{z})=0$, so $A\times B$ is Diophantine, if
$r=s$ then since $\R$ is an integral domain the polynomial
$P(x_1,\dots,x_r,y_1,\dots,y_m)\times
Q(x_1,\dots,x_r,z_1,\dots,z_l)$ defines $A\cap B$.
\item The polynomial that defines the graph of the function $\proj$ is
$\prod^{l}_{k=1}(x_{s_k}-x_{m+k})$ where $S=\{s_1,\dots,s_l\},1 \leq
s_1<s_2<\dots<s_l\leq n.$ Here we used the fact that $\R$ is an
integral domain.
\item Let $V$ be a Diophantine set so there exist a polynomial \newline
$P$ such that $\overline{x} \in A \Leftrightarrow
\exists\;y_1,\dots,y_m,\text{ such that
}P(\overline{x},y_1,\dots,y_m)=0$ so the set $\proj(A)$ is
Diophantine and defined by the polynomial
$Q=P(x_{s_1},\dots,x_{s_l},y_1,\dots,y_{m+n-l})$ where
$S=\{s_1,\dots,s_l\},1 \leq s_1<s_2<\dots<s_l\leq n.$
\end{enumerate}
\end{proof}

For any integral domain, from the above it can be proved that any
Diophantine set is definable. However the converse seems to be
false, $Z$ is definable in $Q$, a result of Julia Robinson, but it
is conjectured that it is not Diophantine over $Q$ \cite{8}. In
quadratic rings a set is Diophantine if and only if it is listable
and self - conjugate \cite{2}. In particular Hilbert's Tenth
Problem is unsolvable for such rings. The analogous result for rationals
remains an open problem, and it seems a to be a hard one.

That Hilbert's Tenth Problem is 
unsolvable over $\mathbb{Z}$ was not the end; the solution of the original Hilbert's Tenth Problem opened up a new class of problems. 
The question posed by Hilbert could be asked for any other ring $R$: is there an algorithm that given any polynomial with coefficients in the ring 
$R,$ has solutions in $R$? This question has been answered for many rings, some examples follow.

\begin{itemize}
\item [$\mathbb{Z}$:] No algorithm exists (as proven in this paper)
\item [$\mathbb{C}$:] Yes, an algorithm exists (this was proved using elimination theory)
\item [$\mathbb{R}$:] Yes, an algorithm exists (this was proved by Tarski's elimination theory for semi-algebraic sets)
\item [$\mathbb{Q}$:] Still not solved!
\end{itemize}
Next we review the status of the problem on quadratic rings \cite{2}.

\section{Hilbert's Tenth Problem over $\mathbb{Z}[\sqrt{d}]$}

We shall prove 
that Hilbert's Tenth Problem is unsolvable for $\mathbb{Z}[\sqrt{d}]$, where $d$ is a square-free integer (we shall first 
confine ourselves to the case when d is positive). 
We will prove that there exists no algorithm that takes as input any 
Diophantine equation ``over $\mathbb{Z}[\sqrt{d}]$" and tells whether it has a solution in $\mathbb{Z}[\sqrt{d}]$ or not, 
by showing that assuming the solvability of the problem for $\mathbb{Z}[\sqrt{d}]$ implies its solvability for $\mathbb{Z}$ (which we know is not true).

\begin{definition}
A Diophantine equation over $\mathbb{Z}$ $(\mathbb{Z}[\sqrt{d}])$ is an equation of the form $P(\overline{x})=0$, 
where $P(\overline{x})$ is a polynomial with coefficients in $\mathbb{Z}$ $(\mathbb{Z}[\sqrt{d}])$, and we seek its solutions in $\mathbb{Z}$ $(\mathbb{Z}[\sqrt{d}])$.
\end{definition}

\begin{definition}
A subset $S$ of $\mathbb{Z}[\sqrt{d}]$ is said to be Diophantine over $\mathbb{Z}[\sqrt{d}]$ if there exists a polynomial $P(t,\overline{x})$ 
with coefficients in $\mathbb{Z}[\sqrt{d}]$ such that
$$t\in S \Leftrightarrow (\exists \overline{x})[P(t,\overline{x})]=0$$
\end{definition}

\begin{lemma}\label{lemma_1}
If $d^\prime$ is a square-free integer different from $d$, and $x,y\in \mathbb{Z}[\sqrt{d}]$, then $x=0$ and $y=0$ if and only if $x^2-d^\prime y^2=0$.
\end{lemma}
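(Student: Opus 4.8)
The plan is to prove the two implications separately, with essentially all of the work concentrated in the reverse one. The forward implication is immediate: substituting $x=0$ and $y=0$ into $x^2-d'y^2$ yields $0$, so I would dispose of it in a single line.

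For the reverse implication, suppose $x^2-d'y^2=0$ with $x,y\in\mathbb{Z}[\sqrt{d}]$. First I would note that if $y=0$ then $x^2=0$ forces $x=0$, so it suffices to derive a contradiction from the assumption $y\neq 0$. Passing to the fraction field $K=\mathbb{Q}(\sqrt{d})$ of the integral domain $\mathbb{Z}[\sqrt{d}]$, the relation rewrites as $d'=(x/y)^2$, so that $d'$ becomes a square in $K$. The lemma therefore reduces to the purely field-theoretic assertion: for distinct square-free integers $d\neq d'$, the element $d'$ is not a square in $\mathbb{Q}(\sqrt{d})$.

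To establish this I would use that $\{1,\sqrt{d}\}$ is a $\mathbb{Q}$-basis of $K$ (here one uses that $d$, being square-free, is not a perfect square, so $\sqrt{d}\notin\mathbb{Q}$). If $d'=(p+q\sqrt{d})^2$ with $p,q\in\mathbb{Q}$, then expanding and comparing coefficients in this basis gives the two equations $p^2+q^2 d=d'$ and $2pq=0$. The equation $2pq=0$ splits into two cases. If $q=0$ then $d'=p^2$ is a rational square, forcing the square-free integer $d'$ to equal $1$, the one degenerate case, excluded in the intended application where $d'$ is a genuine non-square. If $p=0$ then $d'/d=q^2$, and writing $q=m/n$ in lowest terms gives $d'n^2=dm^2$; comparing $p$-adic valuations and using $v_p(d),v_p(d')\in\{0,1\}$ for square-free $d,d'$ shows $v_p(d)=v_p(d')$ for every prime $p$, while the positivity of $n^2,m^2$ forces $d$ and $d'$ to share the same sign. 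Hence $d=d'$, contradicting $d\neq d'$.

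The main obstacle is exactly this last number-theoretic step, namely that distinct square-free integers cannot differ by a rational square factor; everything else is bookkeeping. I expect to dispatch it cleanly through unique factorization, reducing the congruence $v_p(d)\equiv v_p(d')\pmod 2$ to an equality by square-freeness. When $d>0$ and $d'<0$ there is an even quicker route that I would record as a remark: $\mathbb{Z}[\sqrt{d}]\subset\mathbb{R}$, so $x^2-d'y^2=x^2+|d'|y^2$ is a sum of two non-negative reals and vanishes only when $x=y=0$, which settles the negative case without any field theory.
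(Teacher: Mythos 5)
Your proof is correct, but there is nothing in the paper to measure it against: the paper states Lemma \ref{lemma_1} with no proof at all (like Lemmas \ref{lemma_2} and \ref{lemma_3}, it is imported from Denef's paper \cite{2}), so your argument genuinely fills a gap rather than duplicating or diverging from an existing one. Your route is the natural one: the forward direction is trivial; for the converse you pass to the fraction field $\mathbb{Q}(\sqrt{d})$, reduce to showing $d'$ is not a square there, expand $d'=(p+q\sqrt{d})^2$ in the basis $\{1,\sqrt{d}\}$, and dispose of the two branches of $2pq=0$ by unique factorization (the valuation comparison $v_p(d)\equiv v_p(d')\pmod 2$ plus square-freeness plus matching signs forces $d=d'$). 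All steps check out, including the implicit uses of $d\neq 0,1$ (needed so that $\sqrt{d}\notin\mathbb{Q}$ and the ring is a genuine quadratic ring). One point you raise deserves to be promoted from a caveat to an explicit correction of the statement: as literally written the lemma is false, because $d'=1$ is a square-free integer different from $d$, yet $x^2-y^2=0$ has the solution $x=y=1$. The hypothesis must be read as ``$d'$ square-free, not a perfect square, and different from $d$'' (equivalently $d'\neq 0,1$), and this matters downstream: in Theorem \ref{theorem_2} and Theorem \ref{theorem_3} the equation-combining trick $Q_1^2-d'Q_2^2=0$ only forces $Q_1=Q_2=0$ when $d'$ is such a non-square. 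Your closing remark that the mixed-sign case ($d>0$, $d'<0$) follows from positivity alone, with no field theory, is a nice economy worth keeping.
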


\begin{lemma}\label{lemma_2} (a theorem due to Lagrange)
Every natural number is the sum of four squares of natural numbers.
\end{lemma}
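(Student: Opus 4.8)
The plan is to prove Lagrange's theorem by the classical route of reducing to the prime case via a multiplicative identity and then performing a Fermat-style descent. First I would record Euler's four-square identity, which states that for any integers $a_i, b_i$,
\[
(a_1^2+a_2^2+a_3^2+a_4^2)(b_1^2+b_2^2+b_3^2+b_4^2) = c_1^2 + c_2^2 + c_3^2 + c_4^2,
\]
where each $c_j$ is an explicit bilinear expression in the $a_i$ and $b_i$. This shows that the set of sums of four squares is closed under multiplication; since $0$, $1=1^2$, and $2=1^2+1^2$ are already represented, it suffices to represent every odd prime $p$, after which the general case follows from the prime factorization together with the identity.

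Next I would establish that for an odd prime $p$ some multiple $mp$ with $0<m<p$ is a sum of four squares. The key is a pigeonhole argument modulo $p$: the $(p+1)/2$ residues $x^2 \bmod p$ for $0\le x\le (p-1)/2$ and the $(p+1)/2$ residues $-1-y^2 \bmod p$ for $0\le y\le (p-1)/2$ together number more than $p$, hence coincide for some pair $(x,y)$, giving $x^2+y^2+1\equiv 0 \pmod p$. Thus $x^2+y^2+1^2+0^2=mp$ for some positive integer $m$, and the bound $x^2+y^2+1 < 2(p/2)^2+1 < p^2$ forces $0<m<p$.

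The decisive step, and the one I expect to be the main obstacle, is the descent: I would let $m$ be the smallest positive integer for which $mp=x_1^2+x_2^2+x_3^2+x_4^2$ and argue that $m=1$. Supposing $m>1$, I would choose representatives $y_i\equiv x_i \pmod m$ with $|y_i|\le m/2$; then $\sum y_i^2\equiv \sum x_i^2\equiv 0 \pmod m$, so $\sum y_i^2=mr$ with $0\le r<m$. One must rule out $r=0$ (which would force $m\mid p$, impossible since $0<m<p$ and $p$ is prime) and then apply Euler's identity to the product $(mp)(mr)=(\sum x_i^2)(\sum y_i^2)$. The crucial calculation is that, because $y_i\equiv x_i \pmod m$, each of the four resulting terms $c_j$ is divisible by $m$; dividing through by $m^2$ then exhibits $rp$ as a sum of four squares with $0<r<m$, contradicting the minimality of $m$. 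Hence $m=1$ and $p$ itself is a sum of four squares. Finally, since $x^2=|x|^2$ with $|x|\in\nat$, every representation by integer squares is automatically one by squares of natural numbers, which completes the argument.
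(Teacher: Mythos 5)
The paper offers no proof of this lemma at all: it is stated as a classical theorem of Lagrange and used as a black box in Theorem \ref{theorem_1}, so there is no internal argument to compare against. Your proposal supplies the standard classical proof — Euler's four-square identity to reduce to odd primes, a pigeonhole argument modulo $p$ producing $x^2+y^2+1=mp$ with $0<m<p$, and Fermat descent on the minimal $m$ — and this is the right argument, correct in all its main lines. One point needs patching in the descent: with representatives $|y_i|\le m/2$ you can only conclude $\sum y_i^2 \le m^2$, i.e.\ $r\le m$, not $r<m$; when $m$ is even and every $|y_i|=m/2$ you land in the excluded case $r=m$. That case must be ruled out separately, and it is, by the same device you use for $r=0$: all $x_i\equiv m/2 \pmod m$ gives $x_i^2\equiv m^2/4 \pmod{m^2}$, hence $\sum x_i^2\equiv m^2\equiv 0 \pmod{m^2}$, so $m\mid p$, contradicting $1<m<p$. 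With that edge case closed, your descent goes through, and the final remark that integer representations are automatically natural-number representations (since $x^2=|x|^2$) correctly matches the statement as phrased in the paper.
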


\begin{lemma}\label{lemma_3}
If $b\in \mathbb{N}$, and $b$ is not a square, then there exist natural numbers $x$ and $y$, with $y\neq0$ such that $x^2-by^2=1$.
\end{lemma}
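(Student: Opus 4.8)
The plan is to establish this classical solvability of the Pell equation by combining Dirichlet's approximation theorem with a pigeonhole argument and the multiplicativity of the norm on $\mathbb{Z}[\sqrt{b}]$. Since $b$ is not a perfect square, $\sqrt{b}$ is irrational, so Dirichlet's theorem furnishes infinitely many pairs $(x,y)$ of positive integers with $|x - y\sqrt{b}| < 1/y$. For each such pair, the estimate $|x + y\sqrt{b}| \le |x - y\sqrt{b}| + 2y\sqrt{b}$ gives
$$|x^2 - by^2| = |x - y\sqrt{b}|\,|x + y\sqrt{b}| < \frac{1}{y}\left(\frac{1}{y} + 2y\sqrt{b}\right) \le 1 + 2\sqrt{b},$$
and irrationality of $\sqrt{b}$ forces $x^2 - by^2 \neq 0$. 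Thus each of these infinitely many pairs yields a nonzero integer lying in the fixed finite range bounded by $1 + 2\sqrt{b}$, so by the pigeonhole principle some fixed nonzero integer $k$ is the common value $x^2 - by^2 = k$ for infinitely many of them.

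Next I would exploit a second pigeonhole. Among the infinitely many pairs with $x^2 - by^2 = k$ there are only $|k|^2$ possible residues $(x \bmod |k|,\, y \bmod |k|)$, so two distinct pairs $(x_1,y_1) \neq (x_2,y_2)$ agree modulo $|k|$ in both coordinates. Forming the quotient $(x_1 + y_1\sqrt{b})/(x_2 + y_2\sqrt{b})$ in $\mathbb{Q}(\sqrt{b})$ and rationalizing gives $u + v\sqrt{b}$ with
$$u = \frac{x_1 x_2 - b y_1 y_2}{k}, \qquad v = \frac{x_2 y_1 - x_1 y_2}{k}.$$
The congruences make both numerators divisible by $|k|$: the first satisfies $x_1 x_2 - b y_1 y_2 \equiv x_1^2 - b y_1^2 = k \equiv 0$, and the second $x_2 y_1 - x_1 y_2 \equiv x_1 y_1 - x_1 y_1 = 0 \pmod{|k|}$. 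Hence $u, v \in \mathbb{Z}$, and multiplicativity of the norm yields $u^2 - b v^2 = k/k = 1$.

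The main obstacle is ruling out $v = 0$, which is precisely where non-squareness is needed a second time. If $v = 0$, then $x_1 + y_1\sqrt{b} = u(x_2 + y_2\sqrt{b})$ with $u$ rational, and taking norms gives $u^2 = 1$, so $u = \pm 1$; since all four integers are positive, only $u = 1$ is possible, forcing $(x_1,y_1) = (x_2,y_2)$ and contradicting distinctness. Therefore $v \neq 0$, and setting $x = |u|$, $y = |v|$ produces natural numbers with $y \neq 0$ and $x^2 - by^2 = 1$, as required.
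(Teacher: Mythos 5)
Your proof is correct in every step: the Dirichlet-approximation bound $|x^2-by^2| < 1+2\sqrt{b}$, the exclusion of the value $0$ by irrationality of $\sqrt{b}$, both pigeonhole arguments, the integrality of $u$ and $v$ via the componentwise congruences modulo $|k|$, the norm computation $u^2-bv^2=1$, and the elimination of the degenerate case $v=0$ using positivity and linear independence of $1,\sqrt{b}$ over $\mathbb{Q}$. Note, however, that the paper itself offers no proof of this lemma at all: it is stated as a known classical fact (solvability of the Pell equation), in the same spirit as the neighbouring Lemma \ref{lemma_2} (Lagrange's four-square theorem) and Lemma \ref{lemma_4}, which is delegated to the appendix of \cite{Martin}. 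So there is no approach to compare against; what your argument buys is self-containedness, supplying the standard Dirichlet--pigeonhole--norm proof that the paper implicitly outsources to the literature (e.g.\ \cite{2}, \cite{Martin}). One cosmetic remark: in the second pigeonhole step you may note explicitly that when $|k|=1$ the congruence condition is vacuous and any two distinct pairs serve, so the argument degenerates gracefully rather than breaking.
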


\begin{definition}
For $a\in \mathbb{N}, a>1$ and $n\in \mathbb{N}$, we define $x_n(a),y_n(a)$ by setting:
$x_n(a)+(a^2-1)^{1/2}y_n(a)=(a+(a^2-1)^{1/2})^n$  $x_n(a),y_n(a)\in \mathbb{N}$.
\end{definition}
Where the context permits, the dependence on $a$ is not explicitly shown, writing $x_n,y_n$.
We note that $x_n$ and $y_n$ are studied in detail in \cite{Martin}, see the appendix therein where the following lemma is proved: 
\begin{lemma} \label{lemma_4}
If $b=a^2-1$, $a\in \mathbb{N}$, and $a>1$, 
then all the solutions in $\mathbb{N}$ of the Pell equation $x^2-by^2=1$ are given by $x=x_n(a), y=y_n(a)$.
\end{lemma}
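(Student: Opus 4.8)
The plan is to recognize this as the classical structure theorem for the Pell equation $x^2 - by^2 = 1$ and to exploit that the hypothesis $b = a^2 - 1$ makes the fundamental solution completely explicit. First I would record that, setting $\varepsilon = a + \sqrt{b}$, the pair $(a,1)$ solves the equation because $a^2 - (a^2-1)\cdot 1 = 1$; equivalently $\varepsilon$ has norm $1$ in $\mathbb{Z}[\sqrt{b}]$, with inverse $\varepsilon^{-1} = a - \sqrt{b}$ (note $b = a^2-1$ is not a perfect square for $a>1$, so $\sqrt{b}$ is irrational and the representation $u + v\sqrt{b}$ is unique). By the definition of $x_n(a), y_n(a)$ we have $x_n + y_n\sqrt{b} = \varepsilon^n$, and since the norm $N(u + v\sqrt{b}) = u^2 - b v^2$ is multiplicative, $N(\varepsilon^n) = N(\varepsilon)^n = 1$. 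This settles the easy inclusion: every $(x_n, y_n)$ is indeed a natural-number solution.

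The substance is the reverse inclusion, that every solution $(x,y)\in\mathbb{N}^2$ has the form $(x_n, y_n)$. I would argue by a sandwiching (fundamental-domain) argument. Given a solution, set $\xi = x + y\sqrt{b} \ge 1$ (here $x \ge 1$ since $x^2 = 1 + b y^2 \ge 1$). Because $\varepsilon > 1$, the powers $1 = \varepsilon^0 < \varepsilon < \varepsilon^2 < \cdots$ are strictly increasing and unbounded, so there is a unique $n \ge 0$ with $\varepsilon^n \le \xi < \varepsilon^{n+1}$. Put $\eta = \xi\,\varepsilon^{-n}$; then $\eta = u + v\sqrt{b}$ for integers $u,v$ (as $\varepsilon^{-1} = a - \sqrt{b}\in\mathbb{Z}[\sqrt{b}]$), $N(\eta) = 1$, and $1 \le \eta < \varepsilon$.

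The crux is then to show that the only solution $\eta$ of norm $1$ lying in $[1,\varepsilon)$ is $\eta = 1$. From $N(\eta) = 1$ and $\eta \ge 1$ I get its conjugate $\bar\eta = 1/\eta \in (0,1]$, whence $u = \tfrac12(\eta + \bar\eta) > 0$ and $v\sqrt{b} = \tfrac12(\eta - \bar\eta) \ge 0$, so $u,v$ are natural numbers. If $v \ge 1$ then $u^2 = 1 + b v^2 \ge 1 + b = a^2$ forces $u \ge a$, giving $\eta = u + v\sqrt{b} \ge a + \sqrt{b} = \varepsilon$, contradicting $\eta < \varepsilon$. Hence $v = 0$, so $u = 1$ and $\eta = 1$, i.e. $\xi = \varepsilon^n$ and $(x,y) = (x_n, y_n)$. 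I expect this last step — pinning down that $\varepsilon = a + \sqrt{b}$ is genuinely the \emph{minimal} nontrivial solution and that no solution hides in the half-open interval $[1,\varepsilon)$ — to be the main obstacle, although the explicit form $b = a^2 - 1$ makes the needed inequality $u \ge a$ immediate, sparing us the general continued-fraction machinery that the minimality of the fundamental solution would otherwise require.
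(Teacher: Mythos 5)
Your proof is correct and complete. The paper itself does not prove this lemma at all: its ``proof'' is a one-line citation to the appendix of Davis's \emph{Computability and Unsolvability}, where the result is established through a chain of small lemmas, essentially a descent argument showing that any solution $(x,y)$ with $y>0$ can be reduced to a smaller one by multiplying $x+y\sqrt{b}$ by $\varepsilon^{-1}=a-\sqrt{b}$, so that induction pins every solution to a power of $a+\sqrt{b}$. Your sandwiching argument --- choosing the unique $n$ with $\varepsilon^n\le \xi<\varepsilon^{n+1}$ and showing the normalized element $\eta=\xi\varepsilon^{-n}$ of norm $1$ in $[1,\varepsilon)$ must equal $1$ --- is the same mathematics packaged as a single fundamental-domain step rather than an iterated descent, and it is airtight: you correctly verify that $\sqrt{b}$ is irrational (so coordinates are well defined), that $u>0$ and $v\ge 0$ follow from $\eta\ge 1\ge \bar\eta>0$, and that the special form $b=a^2-1$ makes the minimality of $\varepsilon$ immediate via $u^2=1+bv^2\ge a^2$, with no continued-fraction input needed. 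What the paper's approach buys is only brevity (delegation to a reference); what yours buys is a self-contained proof that could replace the citation entirely.
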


\begin{lemma}\label{lemma_5}
If $a,n,k$ are in $\mathbb{N}$, and $a>1$, then $(y_{nk}(a))^2\equiv (y_n(a))^2k^2$ (mod $(y_n(a))^4$).
\end{lemma}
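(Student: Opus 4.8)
The plan is to exploit the closed form hidden in the definition of $x_n,y_n$. Setting $\omega = (a^2-1)^{1/2}$, the defining relation reads $x_n + \omega y_n = (a+\omega)^n$, and since $(a+\omega)^{nk} = \bigl((a+\omega)^n\bigr)^k$ we get $x_{nk} + \omega y_{nk} = (x_n + \omega y_n)^k$. I would expand the right-hand side by the binomial theorem, $(x_n + \omega y_n)^k = \sum_{j} \binom{k}{j} x_n^{\,k-j}\, \omega^{j}\, y_n^{\,j}$, and collect the terms carrying an \emph{odd} power of $\omega$, since those (and only those) contribute to the $\omega$-coefficient $y_{nk}$. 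Writing $\omega^{j} = \omega\,(a^2-1)^{(j-1)/2}$ for odd $j$, this gives
$$ y_{nk} = \sum_{\substack{0 \le j \le k \\ j \text{ odd}}} \binom{k}{j}\, x_n^{\,k-j}\, (a^2-1)^{(j-1)/2}\, y_n^{\,j}. $$

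First I would reduce this modulo $y_n^3$. Every summand with $j \ge 3$ carries the factor $y_n^{\,j}$ and is therefore divisible by $y_n^3$, so only the $j=1$ term survives, yielding the basic congruence
$$ y_{nk} \equiv k\, x_n^{\,k-1}\, y_n \pmod{y_n^3}. $$
Writing $y_{nk} = k\,x_n^{\,k-1} y_n + c\, y_n^3$ for some integer $c$ and squaring, the cross term $2\,(k\,x_n^{\,k-1} y_n)(c\,y_n^3)$ is a multiple of $y_n^4$ and the last term is a multiple of $y_n^6$, so the squaring upgrades the modulus and gives
$$ y_{nk}^2 \equiv k^2\, x_n^{\,2(k-1)}\, y_n^2 \pmod{y_n^4}. $$

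It then remains only to replace $x_n^{\,2(k-1)}$ by $1$. The Pell relation $x_n^2 - (a^2-1)\,y_n^2 = 1$ is available either from Lemma \ref{lemma_4}, or directly by multiplying $x_n + \omega y_n = (a+\omega)^n$ by its conjugate $x_n - \omega y_n = (a-\omega)^n$ and using $(a+\omega)(a-\omega) = a^2-(a^2-1) = 1$. It gives $x_n^2 \equiv 1 \pmod{y_n^2}$, whence $x_n^{\,2(k-1)} = (x_n^2)^{k-1} \equiv 1 \pmod{y_n^2}$. Multiplying this mod-$y_n^2$ congruence through by $k^2 y_n^2$ promotes it to $k^2\, x_n^{\,2(k-1)} y_n^2 \equiv k^2 y_n^2 \pmod{y_n^4}$, and chaining the two displayed congruences yields $y_{nk}^2 \equiv k^2 y_n^2 \pmod{y_n^4}$, as required.

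The only delicate point is the bookkeeping of moduli: one must check that squaring a congruence valid modulo $y_n^3$ produces one valid modulo $y_n^4$ (not merely modulo $y_n^3$), and that multiplying a congruence modulo $y_n^2$ by the factor $y_n^2$ lifts it to a congruence modulo $y_n^4$. Both are immediate once the error terms are tracked explicitly, so I expect no genuine obstacle beyond this careful accounting.
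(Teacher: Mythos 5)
Your proposal is correct and follows essentially the same route as the paper: the same binomial expansion of $(x_n+\omega y_n)^k$ isolating the odd-$j$ terms, the same congruence $y_{nk}\equiv k\,x_n^{k-1}y_n \pmod{y_n^3}$, and the same appeal to the Pell relation $x_n^2\equiv 1 \pmod{y_n^2}$ from Lemma \ref{lemma_4}. The only difference is cosmetic bookkeeping --- the paper divides by $y_n$, squares modulo $y_n^2$, and multiplies back by $y_n^2$, whereas you keep the explicit error term $c\,y_n^3$ and square directly; both verifications are sound.
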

\begin{proof}
Consider the following,
\begin{align}
x_{nk}(a)+y_{nk}(a)(a^2-1)^{1/2} &=(a+(a^2-1)^{1/2})^{nk} \nonumber \\
&=(x_n(a)+y_n(a)(a^2-1)^{1/2})^k \nonumber \\
&=\sum_{j=0}^{k}\binom{k}{j}x_n^{k-j}y_n^j(a^2-1)^{j/2} \nonumber \\
y_{nk} &=\sum_{odd j;1\leq j\leq k} \binom{k}{j}(a^2-1)^{(j-1)/2}x_n^{k-j}y_n^{j} \nonumber \\
y_{nk}-kx_n^{k-1}y_n &= \sum_{j odd;3\leq j\leq k}\binom{k}{j}(a^2-1)^{(j-1)/2}x_n^{k-j}y_n^{j}.  \nonumber
\end{align}
Each term in this sum is divisible by $y_n^3$, hence $y_{nk}\equiv kx_n^{k-1}y_n$ (mod $y_n^3$).
and so $(y_{nk}/y_n)\equiv kx_n^{k-1}$ (mod $y_n^2$). Squaring yields $(y_{nk}/y_n)^2\equiv k^2(x_n^2)^{k-1}$ (mod $y_n^2$).

By Lemma \ref{lemma_4}  ,$x_n^2\equiv 1$ (mod $y_n^2$), thus $(y_{nk}/y_n)^2\equiv k^2$ (mod $y_n^2$), hence $y_{nk}^2\equiv y_n^2k^2$ 
(mod $y_n^4$).
\end{proof}

\begin{lemma}\label{lemma_6}
Let $a,b\in \mathbb{N}$ satisfy $a^2-db^2=1$, with $b\neq 0$ (such a $b$ exists because of lemma \ref{lemma_3}).
 Set $e=a^2-1$.If $x^2-ey^2=1$ and $x,y\in\mathbb{Z}[\sqrt{d}]$,then $y^2\in \mathbb{N}$.
\end{lemma}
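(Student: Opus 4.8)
The plan is to exploit the fact that the coefficient $e$ is not an arbitrary element but equals $db^2$, so that $\sqrt{e}$ already lives inside the ring. Indeed, from $a^2-db^2=1$ and $e=a^2-1$ we get $e=db^2$, hence $\sqrt{e}=b\sqrt{d}\in\mathbb{Z}[\sqrt{d}]$ (recall $b\in\mathbb{N}$ and $d>0$). This turns the equation $x^2-ey^2=1$ into a factorization taking place entirely inside $\mathbb{Z}[\sqrt{d}]$:
$$1=x^2-(b\sqrt{d}\,y)^2=(x-b\sqrt{d}\,y)(x+b\sqrt{d}\,y).$$
Writing $u=x+b\sqrt{d}\,y\in\mathbb{Z}[\sqrt{d}]$, its inverse $u^{-1}=x-b\sqrt{d}\,y$ also lies in $\mathbb{Z}[\sqrt{d}]$, so $u$ is a unit of the ring.

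Next I would pass to coordinates and use the norm. Let $\sigma$ denote the conjugation $p+q\sqrt{d}\mapsto p-q\sqrt{d}$ of $\mathbb{Z}[\sqrt{d}]$ and let $N(\alpha)=\alpha\,\sigma(\alpha)\in\mathbb{Z}$ be the (multiplicative) norm; since $u$ is a unit we have $N(u)=\pm1$, and in general $u^{-1}=\sigma(u)/N(u)$. Comparing the two expressions $u^{-1}=x-b\sqrt{d}\,y$ and $u^{-1}=\sigma(u)/N(u)$ gives $\sigma(u)=N(u)\,(x-b\sqrt{d}\,y)$. Writing $x=p+q\sqrt{d}$ and $y=r+s\sqrt{d}$ with $p,q,r,s\in\mathbb{Z}$ (legitimate because $\{1,\sqrt{d}\}$ is a $\mathbb{Z}$-basis, as $d$ is non-square), one finds $u=(p+bsd)+(q+br)\sqrt{d}$. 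Equating the coefficients of $1$ and of $\sqrt{d}$ on the two sides of $\sigma(u)=N(u)(x-b\sqrt{d}\,y)$ then yields, in the case $N(u)=1$, the equations $bsd=0$ and $q=0$, and in the case $N(u)=-1$, the equations $p=0$ and $br=0$. Since $b\neq0$ and $d\neq0$, this forces $s=0$ (when $N(u)=1$) or $r=0$ (when $N(u)=-1$); in either case $rs=0$.

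Finally I would read off the conclusion from $rs=0$: this means $y=r$ or $y=s\sqrt{d}$, so that $y^2=r^2$ or $y^2=s^2d$, which in both cases is a nonnegative integer, i.e. $y^2\in\mathbb{N}$, as required.

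The main obstacle — indeed essentially the only nontrivial point — is the opening observation that $e=db^2$ makes $\sqrt{e}=b\sqrt{d}$ already an element of $\mathbb{Z}[\sqrt{d}]$, allowing the Pell-type equation to factor inside the ring itself rather than in a proper quadratic extension; once this is seen, the unit/norm bookkeeping is routine. A secondary point to handle carefully is the sign analysis in the two cases $N(u)=\pm1$, together with the (harmless) convention $0\in\mathbb{N}$ so that the degenerate solution $y=0$ is also covered.
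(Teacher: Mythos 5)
Your proof is correct, and its first half coincides with the paper's: both arguments hinge on the observation that $e=a^2-1=db^2$, so the Pell equation factors inside $\mathbb{Z}[\sqrt{d}]$ as $(x-b\sqrt{d}\,y)(x+b\sqrt{d}\,y)=1$, making $u=x+b\sqrt{d}\,y$ a unit with $N(u)=\pm 1$, i.e. $u^{-1}=\pm\overline{u}$. Where you part ways is the endgame. The paper stays coordinate-free: from $u-u^{-1}=2b\sqrt{d}\,y$ it gets $4b^2dy^2+2=u^2+(u^{-1})^2=u^2+\overline{u}^2$, and this last expression is a trace, hence a rational integer; therefore $y^2\in\mathbb{Q}$, and since $y^2\in\mathbb{Z}[\sqrt{d}]$ and the ring is real, $y^2\in\mathbb{Z}_{\geq 0}=\mathbb{N}$. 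You instead expand $x=p+q\sqrt{d}$, $y=r+s\sqrt{d}$ and equate coefficients in $\sigma(u)=N(u)(x-b\sqrt{d}\,y)$, forcing $s=0$ when $N(u)=1$ and $r=0$ when $N(u)=-1$. Your route is more mechanical but yields strictly more information: not merely $y^2\in\mathbb{N}$, but that $y$ itself lies in $\mathbb{Z}$ or in $\sqrt{d}\,\mathbb{Z}$. (As a side remark, your norm $-1$ case is actually vacuous: there $x=q\sqrt{d}$ and $y=s\sqrt{d}$, so $x^2-ey^2=d(q^2-es^2)=1$, impossible for $d>1$; this is harmless, since you cover the case anyway.) The paper's trace identity is the slicker and shorter finish; your coefficient comparison is longer but more transparent and pins down the solutions exactly.
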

\begin{proof}
We have $e=a^2-1=b^2d$, hence $x^2-b^2dy^2=1$. So $(x-b\sqrt{d}y)(x+b\sqrt{d}y)=1$. 
This means that $(x+b\sqrt{d}y)$ is a unit in $\mathbb{Z}[\sqrt{d}]$.
Put $u=(x+b\sqrt{d}y)$, then $u^{-1}=(x-b\sqrt{d}y)$, form which it follows that
$4b^2dy^2=(u-u^{-1})^2=u^2+(u^{-1})^2-2$, in other words, $u^2+(u^{-1})^2=4b^2dy^2+2$.

Since $u$ is a unit, $N(u)=\pm1$, that is, $u\overline{u}=1$, 
hence $\pm\overline{u}=u^{-1}$. And so $4b^2dy^2+2=u^2+(\overline{u}^2)$. Thus $4b^2dy^2+2\in \mathbb{Z}$. 
Hence $y^2\in \mathbb{Q}$. But $y^2\in \mathbb{Z}[\sqrt{d}]$, so $y^2\in \mathbb{Z}$; and since $d>1$, we have that $y^2\in \mathbb{N}$.
\end{proof}

\begin{lemma}\label{lemma_7}
Let $d>1$, and $x,y,z\in \mathbb{Z}[\sqrt{d}]$. If $x\equiv y$ (mod $z$) and $0\leq x<z$, $0\leq \overline{x}<\overline{z}$ ,
$0\leq y<z$, and $0\leq \overline{y}<\overline{z}$, then $x=y$.
\end{lemma}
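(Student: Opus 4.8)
The plan is to convert the congruence hypothesis into an exact divisibility statement and then show that the quotient must vanish by a size estimate carried out simultaneously on an element of $\mathbb{Z}[\sqrt{d}]$ and on its conjugate. Since $d>1$ is square-free, $\sqrt{d}$ is irrational and $\mathbb{Z}[\sqrt{d}]$ sits inside $\mathbb{R}$ as an integral domain, so the order symbols $0\le x<z$ etc.\ are genuine inequalities between real numbers, and in particular the hypotheses force $z>0$ and $\overline{z}>0$. Writing $x\equiv y\pmod z$ as $x-y=zw$ for some $w\in\mathbb{Z}[\sqrt{d}]$ reduces the whole statement to proving $w=0$.

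First I would record the two-sided bounds coming from the hypotheses: from $0\le x<z$ and $0\le y<z$ we get $|x-y|<z$, and from $0\le\overline{x}<\overline{z}$, $0\le\overline{y}<\overline{z}$ we get $|\overline{x}-\overline{y}|<\overline{z}$. Conjugating $x-y=zw$ gives $\overline{x}-\overline{y}=\overline{z}\,\overline{w}$, so dividing the first bound by $z>0$ yields $|w|<1$ and dividing the second by $\overline{z}>0$ yields $|\overline{w}|<1$. The decisive step is then to pass to the norm $N(w)=w\overline{w}$: as a product of two reals each of absolute value below $1$ we have $|N(w)|=|w|\,|\overline{w}|<1$, while $N(w)=a^2-db^2\in\mathbb{Z}$ when $w=a+b\sqrt{d}$. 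An integer of absolute value strictly less than $1$ must be $0$, so $N(w)=0$, and because $d$ is not a perfect square this forces $a=b=0$, i.e.\ $w=0$; hence $x-y=zw=0$.

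I expect the only genuinely delicate points to be the two foundational observations rather than any computation. One is justifying that the ordering makes sense and that it yields $z>0$, $\overline{z}>0$ together with the strict bounds $|w|<1$, $|\overline{w}|<1$; this rests on the embedding $\mathbb{Z}[\sqrt{d}]\hookrightarrow\mathbb{R}$ valid for $d>1$ square-free. The other is the implication $N(w)=0\Rightarrow w=0$, which is exactly where square-freeness of $d$ is used, since $a^2=db^2$ with $b\ne 0$ would make $d$ a rational square, contradicting $d>1$ square-free. Everything else is the routine multiplicativity and integrality of the norm, already available from the discussion surrounding Lemma~\ref{lemma_6}.
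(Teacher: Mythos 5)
Your proposal is correct and follows essentially the same route as the paper: both write $x-y=zw$, pass to the conjugate equation, and use the multiplicativity and integrality of the norm together with the bounds $|x-y|<z$, $|\overline{x}-\overline{y}|<\overline{z}$ to force $w=0$. The only difference is presentational (you argue directly via $|N(w)|<1\Rightarrow N(w)=0\Rightarrow w=0$, while the paper argues by contradiction from $w\neq 0\Rightarrow |N(w)|\geq 1$), and your explicit justification that a nonzero $w$ has nonzero norm, via square-freeness of $d$, is a point the paper leaves implicit.
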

\begin{proof}
Suppose that $x\neq y$, then $x-y=zw$, with $w\neq0$. So $|x-y||\overline{x}-\overline{y}|=|z\overline{z}||N(w)|.$
Since $w\neq0$, $|N(w)\geq1|$, hence $|x-y||\overline{x}-\overline{y}|\geq |z\overline{z}|$. But this is in contradiction with the hypothesis of the lemma.
\end{proof}

\begin{theorem}\label{theorem_1}
Let $\Sigma$ be the following system of Diophantine equations in the unknowns 
$t,x,y,u,v,z,w,h,q,r,s$ (where $e$ is as defined in the hypothesis of Lemma \ref{lemma_6}).
\begin{enumerate}
\item $x^2-ey^2=1$
\item $u^2-ev^2=1$
\item $v^2-y^2t=zy^4$
\item $t=w^2$
\item $y^2-t=1+h^2+q^2+r^2+s^2$
\end{enumerate}
Then
\begin{enumerate}
\item If $\Sigma$ has a solution in $\mathbb{Z}[\sqrt{d}]$, then $t\in \mathbb{N}$, and
\item If $k\in \mathbb{N}$ and $k\neq 0$, then $\sum$ has a solution in $\mathbb{Z}[\sqrt{d}]$ with $t=k^2$.
\end{enumerate}
\end{theorem}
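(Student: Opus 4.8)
The plan is to prove the two assertions separately, treating the ``only if'' direction (part 1) as the substantive one and the ``if'' direction (part 2) as an explicit construction built out of the Pell solutions $x_n(a),y_n(a)$.

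For part 2, given $k\in\mathbb{N}$ with $k\neq 0$, I would exhibit a solution living entirely inside $\mathbb{N}\subseteq\mathbb{Z}[\sqrt d]$. Since $d$ is square-free and $>1$ it is not a perfect square, so Lemma \ref{lemma_3} supplies $a,b\in\mathbb{N}$ with $a^2-db^2=1$, $b\neq 0$; then $a>1$ and $e=a^2-1$ is exactly as in the hypothesis of Lemma \ref{lemma_6}. I would choose the index $n$ large enough that $y_n(a)^2\geq k^2+1$ (legitimate because $y_n(a)$ is unbounded in $n$) and set $x=x_n(a)$, $y=y_n(a)$, $u=x_{nk}(a)$, $v=y_{nk}(a)$, $t=k^2$, $w=k$. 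Equations (1) and (2) then hold by Lemma \ref{lemma_4}, and (4) holds by definition. Equation (3) is precisely Lemma \ref{lemma_5}: it gives $y_{nk}^2\equiv y_n^2k^2 \pmod{y_n^4}$, so $z=(v^2-y^2t)/y^4$ is a rational integer and hence lies in $\mathbb{Z}[\sqrt d]$. Finally $y^2-t-1=y_n(a)^2-k^2-1\geq 0$ is a natural number, so Lagrange's theorem (Lemma \ref{lemma_2}) produces $h,q,r,s\in\mathbb{N}$ realizing equation (5). This settles the construction.

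For part 1, suppose $\Sigma$ has a solution in $\mathbb{Z}[\sqrt d]$. Applying Lemma \ref{lemma_6} to equations (1) and (2) gives $y^2\in\mathbb{N}$ and $v^2\in\mathbb{N}$; write $Y=y^2$. Equation (4) shows $t=w^2$, so under the identity embedding $\mathbb{Z}[\sqrt d]\hookrightarrow\mathbb{R}$ and under the conjugation $x\mapsto\overline{x}$ the numbers $t$ and $\overline{t}=\overline{w}^2$ are nonnegative reals. Running equation (5) through each embedding, and using that $Y\in\mathbb{N}$ is fixed by conjugation together with the nonnegativity of the four squares, yields $0\leq t<Y$ and $0\leq\overline{t}<Y$ in $\mathbb{R}$ (a quick separate check rules out $y=0$, since then (5) would force $t<0$). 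The missing ingredient is a congruence: equation (3) reads $v^2-Yt=zY^2$, so $v^2/Y=t+zY\in\mathbb{Z}[\sqrt d]\cap\mathbb{Q}=\mathbb{Z}$; thus the integer $c:=v^2/Y$ satisfies $t\equiv c\pmod{Y}$. Reducing $c$ modulo $Y$ gives $t_0\in\{0,1,\dots,Y-1\}$ with $t\equiv t_0\pmod Y$, and since $t_0\in\mathbb{Z}$ equals its own conjugate, all four bounds required by Lemma \ref{lemma_7} (with modulus $Y$) are in force. That lemma then collapses the congruence to the equality $t=t_0$, so $t\in\mathbb{N}$.

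The main obstacle is part 1, and within it the delicate point is that membership of $t$ in $\mathbb{N}$ cannot be read off from a single inequality: one must control $t$ and its conjugate $\overline{t}$ simultaneously. This is exactly why the sum-of-four-squares padding in (5) must be exploited at both archimedean places, and why Lemma \ref{lemma_7} is stated with the two-sided bounds $0\leq x<z$, $0\leq\overline{x}<\overline{z}$. Extracting the genuine rational-integer residue $c$ with $t\equiv c\pmod{Y}$ out of the identity in equation (3) is the technical heart of the argument, as it is precisely what makes Lemma \ref{lemma_7} applicable and forces the ambient $\mathbb{Z}[\sqrt d]$-value $t$ down to an ordinary natural number.
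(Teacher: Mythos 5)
Your proof is correct and follows essentially the same route as the paper: part 2 is the identical construction (Pell solutions via Lemma \ref{lemma_4}, the choice of $z$ via Lemma \ref{lemma_5}, and Lagrange's theorem, Lemma \ref{lemma_2}, for equation (5)), and part 1 uses the same ingredients in the same order --- Lemma \ref{lemma_6} to get $y^2,v^2\in\mathbb{N}$, the congruence extracted from equation (3), the two-sided bounds on $t$ and $\overline{t}$ from equations (4) and (5), and Lemma \ref{lemma_7} to pin $t$ down. The only (immaterial) difference is the final application of Lemma \ref{lemma_7}: the paper conjugates equation (3) to obtain $t\equiv\overline{t}\pmod{y^2}$ and concludes $t=\overline{t}\in\mathbb{Z}$, whereas you compare $t$ with the reduced rational-integer residue $t_0$ of $v^2/y^2$; your explicit check that $y\neq 0$ is a small point of care that the paper passes over silently.
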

\begin{proof}
\begin{enumerate}
\item Suppose that the system has a solution in $\mathbb{Z}[\sqrt{d}]$. 
From the first two equations and Lemma \ref{lemma_6}, we have $y^2,v^2\in \mathbb{N}$.
From 3 it follows that $(v^2/y^2)\equiv t$ (mod $y^2$).
$\overline{v^2-y^2t}=\overline{zy^4}$, hence $v^2-y^2\overline{t}=\overline{z}y^4$, and so $(v^2/y^2)\equiv \overline{t}$ (mod $y^2$). 
Thus $t\equiv \overline{t}$ (mod $y^2$).
By 4, we have $0\leq t$ and by 5 we have  $t<y^2$ and $y^2-\overline{t}=1+\overline{h}^2+\overline{q}^2+\overline{r}^2+\overline{s}^2,$ 
thus $0\leq t<y^2$ and $0\leq \overline{t}<y^2$.
Using Lemma \ref{lemma_7}, we obtain $t=\overline{t}$. So $t\in \mathbb{N}$.
\item Suppose $t=k^2,k\in \mathbb{N}$. Take $n$ such that $y_n(a)>k$, where $a$ is as in Lemma \ref{lemma_6}.
Set $x=x_n,y=y_n,u=x_{nk},v=y_{nk},w=k.$ By Lemma \ref{lemma_4}, 1 and 2 are satisfied. By Lemma \ref{lemma_5}, we can choose z 
such that 3 holds.
Obviously 4 is satisfied. Finally, $y^2-k^2>0$ and by Lemma \ref{lemma_2}, 5 can also be satisfied.
\end{enumerate}
\end{proof}

\begin{theorem}\label{theorem_2}
$\mathbb{N}$ is a Diophantine subset of $\mathbb{Z}[\sqrt{d}]$.
\end{theorem}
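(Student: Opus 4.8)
The plan is to use the system $\Sigma$ of Theorem \ref{theorem_1} to Diophantine-define the perfect squares, and then to cross the gap from squares to all of $\mathbb{N}$ by Lagrange's four-square theorem (Lemma \ref{lemma_2}). First I would observe that the set
$$T = \{t \in \mathbb{Z}[\sqrt{d}] : (\exists x,y,u,v,z,w,h,q,r,s)\ \Sigma \text{ holds}\}$$
is Diophantine over $\mathbb{Z}[\sqrt{d}]$. Each of the five equations of $\Sigma$ defines a Diophantine set, their conjunction is Diophantine by closure under intersection, and existentially quantifying away the auxiliary unknowns is exactly a projection $\proj$; all of this is licensed by Theorem \ref{smallest property2} and the strong-property axioms $(iii)$ and $(v)$. (If one prefers a single equation, the system may be replaced by $P_1^2 + \cdots + P_5^2 = 0$, which is legitimate since $\mathbb{Z}[\sqrt{d}] \subset \mathbb{R}$.)

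Next I would read off from Theorem \ref{theorem_1} exactly the two inclusions I need: part (1) gives $T \subseteq \mathbb{N}$, while part (2) gives $\{k^2 : k \in \mathbb{N},\ k \neq 0\} \subseteq T$. I then set $\mathrm{Sq}(s) :\equiv (s = 0) \lor (s \in T)$. This is again Diophantine: over the integral domain $\mathbb{Z}[\sqrt{d}]$ a disjunction of two Diophantine conditions $\exists \bar y\, P = 0$ and $\exists \bar z\, Q = 0$ is defined by the single equation $\exists \bar y\, \exists \bar z\ P \cdot Q = 0$, since there are no zero divisors. Writing $Q_0$ for the set defined by $\mathrm{Sq}$, the previous step yields $\{k^2 : k \in \mathbb{N}\} \subseteq Q_0 \subseteq \mathbb{N}$.

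Finally I would define
$$n \in \mathbb{N} \Longleftrightarrow (\exists s_1,s_2,s_3,s_4)\ \big[\mathrm{Sq}(s_1) \land \mathrm{Sq}(s_2) \land \mathrm{Sq}(s_3) \land \mathrm{Sq}(s_4) \land n = s_1 + s_2 + s_3 + s_4\big],$$
whose right-hand side is a Diophantine condition on $n$. For the forward inclusion, each $s_i \in Q_0 \subseteq \mathbb{N}$ and $\mathbb{N}$ is closed under addition, so the sum lies in $\mathbb{N}$. For the reverse inclusion, given $n \in \mathbb{N}$, Lagrange's theorem writes $n = a^2 + b^2 + c^2 + e^2$ with $a,b,c,e \in \mathbb{N}$; each $a^2$ is either $0$ or a nonzero natural square, hence lies in $Q_0$, so $n$ is represented. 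Thus the displayed equivalence is a Diophantine definition of $\mathbb{N}$ over $\mathbb{Z}[\sqrt{d}]$.

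The main obstacle is conceptual rather than computational: $\Sigma$ only captures the squares, not all of $\mathbb{N}$, so the crux is the bridge furnished by the four-square theorem, together with the deliberate inclusion of $0$ among the allowed summands so that the small values $0,1,2,3$ are reached. The secondary point worth verifying is that every closure step used above — conjunction, projection, and especially the disjunction adjoining $0$ — genuinely preserves Diophantineness; this is precisely where the hypotheses that $\mathbb{Z}[\sqrt{d}]$ is an integral domain (no zero divisors, for the product trick) and embeds in $\mathbb{R}$ are used.
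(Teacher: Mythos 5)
Your proof is correct and follows essentially the same route as the paper's: both arguments take the set $T$ cut out by the system $\Sigma$ of Theorem \ref{theorem_1}, use that theorem to sandwich it between the nonzero natural squares and $\mathbb{N}$, and then capture all of $\mathbb{N}$ as sums of four such elements via Lagrange's theorem (Lemma \ref{lemma_2}). Two differences are worth recording. First, where you appeal to abstract closure properties, the paper manufactures a single explicit polynomial $Q$ by iterating the $x^2-d'y^2=0$ trick of Lemma \ref{lemma_1}: it substitutes $k_i^2$ for $t$ in the combined polynomial $P$ and adjoins the equation $t=k_1^2+k_2^2+k_3^2+k_4^2$. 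On this point, be careful with your citation of Theorem \ref{smallest property2}(iii): as stated there, the product of two defining polynomials vanishes iff one of them does, so it really realizes unions rather than intersections; your parenthetical fallback $P_1^2+\cdots+P_5^2=0$ (valid since $\mathbb{Z}[\sqrt{d}]\subset\mathbb{R}$ when $d>0$), or Lemma \ref{lemma_1} itself, is the correct way to realize conjunction, while your disjunction-by-product step is a correct use of the integral-domain hypothesis. Second, your explicit adjunction of $0$ is a genuine improvement in rigor over the paper: Theorem \ref{theorem_1}(2) only guarantees solvability of $\Sigma$ for $t=k^2$ with $k\neq 0$, and numbers such as $1$, $2$, $3$ admit no representation as a sum of four \emph{nonzero} squares, so the paper's converse direction (``if $t\in\mathbb{N}$ then $Q_1=0$ is solvable'') silently requires the unstated fact that $t=0$ also satisfies $\Sigma$; your disjunct $s=0$ removes this gap outright.
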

\begin{proof}
Put
$$P_1(x,y)=x^2-ey^2-1$$
$$P_2(u,v)=u^2-ev^2-1$$
$$P_3(v,y,z)=v^2-y^2t-zy^4$$
$$P_4(t,w)=t-w^2$$
$$P_5(y,t,h,q,r,s)=y^2-t-1-h^2-q^2-r^2-s^2$$
$$P(t,x,y,u,v,z,w,h,q,r,s)=(((P_1^2-d^\prime P_2^2)^2-d^\prime P_3^2)^2-d^\prime P_4^2)^2-d^\prime P_5^2$$
$Q_1=((P^2(k_1^2,x_1,y_1,u_1,v_1,z_1,w_1,h_1,q_1,r_1,s_1)-d^\prime P^2(k_2^2,x_2,y_2,u_2,v_2,z_2,w_2,h_2,q_2,r_2,s_2))^2-d^\prime$ 
$P^2(k_3^2,x_3,y_3,u_3,v_3,z_3,w_3,h_3,q_3,r_3,s_3))^2-d^\prime p^2(k_4^2,x_4,y_4,u_4,v_4,z_4,w_4,h_4,q_4,r_4,s_4)$
$$Q_2=t-k_1^2-k_2^2-k_3^2-k_4^2$$
$$Q=Q_1^2-d^\prime Q_2^2.$$
Where $d^\prime$ is a square-free integer different from $d$.

Suppose that there exists a solution for the equation $Q=0$, then by Lemma \ref{lemma_1} there exists a solution for each of
$P_1=0,P_2=0,P_3=0,P_4=0,P_5=0$. Then $t\in \mathbb{N}$ (by Theorem \ref{theorem_1}).

Conversely, if $t\in \mathbb{N}$, then there are $k_1,k_2,k_3,k_4$ such that the equation $Q_2=0$ is satisfied.
By Theorem \ref{theorem_1} and Lemma \ref{lemma_1}, there exists a solution for the equation $Q_1=0$.
Again by Lemma \ref{lemma_1} there exists a solution for $Q=0$.
\end{proof}

\begin{theorem}\label{theorem_3}
Hilbert's Tenth Problem is unsolvable for $\mathbb{Z}[\sqrt{d}]$.
\end{theorem}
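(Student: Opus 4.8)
The plan is to reduce Hilbert's Tenth Problem over $\mathbb{Z}$ to the problem over $\mathbb{Z}[\sqrt{d}]$, exploiting the fact (Theorem \ref{theorem_2}) that $\mathbb{N}$ is a Diophantine subset of $\mathbb{Z}[\sqrt{d}]$. The key idea is that solvability of $H10$ over $\mathbb{Z}[\sqrt{d}]$ would yield an algorithm deciding solvability over $\mathbb{Z}$ (equivalently over $\mathbb{N}$, since Lagrange's four-square theorem, Lemma \ref{lemma_2}, lets us pass freely between integer and natural-number solvability), and the latter is impossible by Theorem \ref{theorem_3}'s predecessor --- the unsolvability over $\mathbb{Z}$ established earlier in the paper.

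First I would assume, for contradiction, that $H10$ is solvable over $\mathbb{Z}[\sqrt{d}]$; that is, there is an algorithm $\mathcal{A}$ that, given any polynomial $F$ with coefficients in $\mathbb{Z}[\sqrt{d}]$, decides whether $F(\overline{x})=0$ has a solution in $\mathbb{Z}[\sqrt{d}]$. Given an arbitrary Diophantine equation $G(x_1,\dots,x_n)=0$ over $\mathbb{Z}$ (for which we wish to decide natural-number solvability), I would use Theorem \ref{theorem_2}: the set $\mathbb{N}\subseteq\mathbb{Z}[\sqrt{d}]$ is defined by a fixed polynomial $Q$, so that for each variable $x_i$ one can append the constraint ``$x_i\in\mathbb{N}$'' as an existentially quantified Diophantine condition over $\mathbb{Z}[\sqrt{d}]$. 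Concretely, I would build a single polynomial $H$ over $\mathbb{Z}[\sqrt{d}]$, combining $G$ with $n$ copies of the defining system for $\mathbb{N}$ (one per variable) via the sum-of-squares / Lemma \ref{lemma_1} trick used throughout Theorem \ref{theorem_2}, so that $H=0$ has a solution in $\mathbb{Z}[\sqrt{d}]$ if and only if $G=0$ has a solution with all $x_i\in\mathbb{N}$. Feeding $H$ to $\mathcal{A}$ then decides natural-number solvability of $G$.

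The remaining step is to observe that this gives a decision procedure for $H10$ over $\mathbb{N}$, hence over $\mathbb{Z}$ (by Lemma \ref{lemma_2}, a system has an integer solution iff a related system has a natural-number solution, since each integer variable can be replaced by a difference of two naturals or a sum of four squares with signs), contradicting the unsolvability of $H10$ over $\mathbb{Z}$ proved in the earlier corollary. Therefore no such algorithm $\mathcal{A}$ exists, and $H10$ is unsolvable over $\mathbb{Z}[\sqrt{d}]$.

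I expect the \emph{main obstacle} to be purely bookkeeping rather than conceptual: one must verify that the reduction is genuinely \emph{effective}, i.e.\ that the map $G\mapsto H$ is computable and that the coefficients of $H$, which now live in $\mathbb{Z}[\sqrt{d}]$, are produced algorithmically from those of $G$. Care is also needed because Theorem \ref{theorem_2} was stated for positive square-free $d$; the argument as written confines itself to that case, and the passage to negative $d$ (promised in the section's preamble) would require the analogous conjugation and norm estimates (Lemmas \ref{lemma_6}, \ref{lemma_7}) to remain valid, which is where the imaginary-quadratic case genuinely differs. For the stated positive-$d$ case, however, the proof is a short effective reduction once Theorem \ref{theorem_2} is in hand.
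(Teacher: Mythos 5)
Your proposal is correct and follows essentially the same route as the paper: assume an algorithm for $\mathbb{Z}[\sqrt{d}]$, use the polynomial $Q$ from Theorem \ref{theorem_2} to constrain each variable to $\mathbb{N}$, combine the system $P=0$, $Q(a_i,\overline{x_i})=0$ into a single equation via the $d^\prime$ trick of Lemma \ref{lemma_1}, and conclude via Lagrange's theorem (Lemma \ref{lemma_2}) that this would solve $H10$ over $\mathbb{Z}$, a contradiction. Your closing remarks on effectivity and on the negative-$d$ case are sensible additions but do not change the argument, which matches the paper's proof step for step.
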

\begin{proof}
Assume that there exists an algorithm that takes as input any 
Diophantine equation over $\mathbb{Z}[\sqrt{d}]$ and tells whether it has a solution or not. 
Let $P(a_1,\ldots,a_n)=0$ be a Diophantine equation over $\mathbb{Z}.$
There is a polynomial $Q$ such that
$$a_i\in \mathbb{N} \Leftrightarrow (\exists \overline{x_i}\in \mathbb{Z}[\sqrt{d}])[Q(a_i,\overline{x_i})=0].$$
Combine the following system of equations using a square-free integer $d^\prime$ 
different from $d$ (as was shown before) to obtain a polynomial $R$
$$P(\overline{a})=0$$
$$Q(a_1,\overline{x_1})=0$$
$$\vdots$$
$$Q(a_n,\overline{x_n})=0.$$
Apply the algorithm to $R$. If $R$ does not have a root in $\mathbb{Z}[\sqrt{d}]$, the algorithm tells us, 
but this means that for all possible choices of a tuple $(a_1,\ldots,a_n)$ in $\mathbb{N}$ there are $\overline{x_1},\ldots,\overline{x_n}$ 
in $\mathbb{Z}[\sqrt{d}]$ such that the equations
$$Q(a_1,\overline{x_1})=0$$
$$\vdots$$
$$Q(a_n,\overline{x_n})=0$$
together are satisfied, but the whole system
$$P(\overline{a})=0$$
$$Q(a_1,\overline{x_1})=0$$
$$\vdots$$
$$Q(a_n,\overline{x_n})=0$$
is not satisfied, which implies that all $(a_1,\ldots,a_n)$ in $\mathbb{N}$ does not satisfy $$P(\overline{a})=0.$$ 
Hence the algorithm tells us in this case that this equation does not have a solution.
If the algorithm tells us that $R$ has a root in $\mathbb{Z}[\sqrt{d}]$, then the whole system
$$P(\overline{a})=0$$
$$Q(a_1,\overline{x_1})=0$$
$$\vdots$$
$$Q(a_n,\overline{x_n})=0$$
has a solution(by Lemma \ref{lemma_1}). And the last $n$ equations tell us that $(a_1,\ldots,a_n)\in (\mathbb{Z}[\sqrt{d}])^n$.
Hence we have an algorithm that takes as input any Diophantine equation over $\mathbb{Z}$ 
and tells whether or not it has a solution in $\mathbb{N}$. 
But this implies (using the theorem due to Lagrange mentioned in Lemma \ref{lemma_2}) 
that Hilbert's Tenth Problem is solvable for $\mathbb{Z}$ (which is known to be false).
\end{proof}

%%%%%%%%%%%%%%%%%%%%%%%%%%%%%%%%%%%%%%%%%%%%%%%%%%%%%%%%%%%%%%%%%%%%%%%%%%%%%%%%%%%%%%%%%%%%%%%%%%%%%%%%%%%%%%%%%%%%%%%%%%%%%%%

\section{Hilbert's Tenth Problem over $\mathbb{Z}[i]$}

Here we are going to prove the unsolvability of Hilbert's Tenth Problem in the ring of Gaussian integers $\mathbb{Z}[i]$. 
Relying on its unsolvability in the ring of rational integers $\mathbb{Z}$, it is enough to prove that $\mathbb{Z}$ is
Diophantine in $\mathbb{Z}[i].$ For the proof we introduce some lemmas :
\begin{enumerate}[1.]
\item Consider the sequence $$\alpha(0), \alpha (1), \dots ,\alpha (n),\dots $$ 
defined by the following recurrence relation : $$\alpha(0)=0,\; \alpha(1)=1,\quad \alpha(n+1)=4\alpha(n)-\alpha(n-1).$$ 
This sequence is purely periodic modulo $4(3p^2+1)$. Since $\alpha(0)=0$ by definition, there are infinitely many multiples of 
$4(3p^2+1)$ in this sequence. From the recurrent relation, it is easy to show by induction that $$\alpha(n)\equiv n \pmod 2 .$$ 
Therefore, there is an odd value of $n$ such that $n>3$ and $$\alpha(n-1)\equiv 0 \pmod {4(3p^2+1)}.$$
\item All solutions in $\mathbb{Z}[i]$ of the equation $$x^2-4xy+y^2=0$$ 
are $$x=\alpha(n+1),\quad y=\alpha(n)$$ $$x=\alpha(n),\quad y=\alpha(n+1)$$ 
$$x=-\alpha(n+1),\quad y=-\alpha(n)$$ $$x=-\alpha(n),\quad y=-\alpha(n+1)$$where $n=0,1,2,\dots\;$.
\item $\alpha(pn)\equiv (-1)^pp\alpha(n-1)^{n-1}\pmod{\alpha(n)}$
\item For $a,b,c$ in $\mathbb{Z}[i]$ satisfying $a=b+cz$ and $|a|<|c|$, if $b$ and $c$ are rational integers, then $a$ is also a rational integer.
\end{enumerate}

\begin{theorem}
$\mathbb{Z}$ is Diophantine in $\mathbb{Z}[i] .$
\end{theorem}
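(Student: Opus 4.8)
The plan is to exhibit a polynomial $P(t,\overline{x})$ with coefficients in $\mathbb{Z}[i]$ for which $t\in\mathbb{Z}\Leftrightarrow(\exists\overline{x}\in\mathbb{Z}[i])\,[P(t,\overline{x})=0]$, following the same philosophy as the treatment of $\mathbb{Z}[\sqrt{d}]$ above: first build a Diophantine \emph{trap} that forces auxiliary unknowns to be rational integers, then use a congruence together with a size bound to pin down $\mathbb{Z}$ exactly. The engine of the whole argument is the conic of item~(2): since every solution $(x,y)\in\mathbb{Z}[i]^2$ of $x^2-4xy+y^2=1$ is one of $\pm(\alpha(n+1),\alpha(n))$ or $\pm(\alpha(n),\alpha(n+1))$, the single Diophantine condition $\exists x\,(x^2-4xy+y^2=1)$ already forces $y$ to be a rational integer, namely a term $\pm\alpha(n)$ of the sequence. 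This simultaneously (i) certifies rational-integrality of a variable \emph{without presupposing a definition of $\mathbb{Z}$}, and (ii) supplies an unbounded Diophantine family of rational-integer moduli $\alpha(n)$.

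First I would use item~(2) to introduce a modulus $m=\alpha(n)$ that is provably in $\mathbb{Z}$ and may be taken arbitrarily large. Next I would invoke the arithmetic of the sequence: item~(1) guarantees, for the natural-number parameter under test, an odd index $n>3$ with $4(3p^2+1)\mid\alpha(n-1)$, and item~(3) reads the parameter off a residue through $\alpha(pn)\equiv(-1)^{p}\,p\,\alpha(n-1)^{p-1}\pmod{\alpha(n)}$; together these let a congruence \emph{between sequence terms} encode the integer we wish to recover, all the quantities involved being rational integers by item~(2). The decisive step is item~(4): if a Gaussian integer $t$ satisfies $t\equiv b\pmod{c}$ with $b,c\in\mathbb{Z}$ and $|t|<|c|$, then $t\in\mathbb{Z}$ --- for the imaginary part of $t$ is then a multiple of $c$ of absolute value $<|c|$, hence zero. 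Because the moduli produced in the first step are unbounded, the size hypothesis $|t|<|c|$ is Diophantically attainable, while conversely any $t\in\mathbb{Z}$ trivially supplies $b=t$, $z=0$ and any such $c$; this is exactly the two-way equivalence we need.

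Finally I would fold the resulting finite system of equations $E_1=\dots=E_k=0$ into one equation $P=0$ over $\mathbb{Z}[i]$ by the norm-form device used for $\mathbb{Z}[\sqrt{d}]$ in Theorem~\ref{theorem_2}: the analogue of Lemma~\ref{lemma_1} here is that, since $\sqrt{3}\notin\mathbb{Q}(i)$, the identity $A^{2}-3B^{2}=0$ has only the solution $A=B=0$ in $\mathbb{Z}[i]$, so nesting the $E_i$ as $(\dots((E_1^{2}-3E_2^{2})^{2}-3E_3^{2})^{2}\dots)$ collapses the system to a single polynomial. \textbf{The hard part} will not be this bundling but the middle step: the construction looks circular, since we must Diophantine-ly certify that the witnesses $m$, $b$ and the congruence data are genuine rational integers, which is precisely what is being defined. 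Breaking this circularity is exactly what item~(2) accomplishes, and the genuinely delicate verification is to align the congruence of item~(3) with the size control so that the bounding lemma~(4) applies --- i.e. to show that the residue machinery forces $|t|<|c|$ to coexist with $t\equiv b\pmod{c}$ in both directions. I expect the bookkeeping matching the index $n$ from item~(1) to the modulus in item~(3), and checking that the size bound is simultaneously enforceable by a further Pell/sum-of-squares condition, to be where the real work lies.
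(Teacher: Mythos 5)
Your plan follows the paper's own proof almost exactly: the conic $x^2-4xy+y^2=1$ (item (2)) to force Gaussian solutions to be real and to supply the integer moduli $\alpha(n)$; items (1) and (3) to encode the parameter in a congruence between sequence terms; item (4) to conclude integrality from that congruence plus a size bound; and the norm form $A^2-3B^2$ (valid because $\sqrt{3}\notin\mathbb{Q}(i)$) to collapse the finite system into one equation --- this last device is exactly the analogue of Lemma \ref{lemma_1} which the paper's $\mathbb{Z}[i]$ section leaves implicit. Your justification of item (4) (the imaginary part of $t$ is a multiple of $c$ of absolute value less than $|c|$, hence zero) is also correct.

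The one step you defer --- how to enforce the size hypothesis $|p|<|y|$ of item (4) Diophantine-ly, \emph{for every} Gaussian solution and not just the constructed ones --- is precisely the non-obvious mechanism, and your guess at it (``a further Pell/sum-of-squares condition'') would not work: $\mathbb{Z}[i]$ is not ordered, and the sums of squares in $\mathbb{Z}[i]$ are exactly the Gaussian integers with even imaginary part, so, unlike in the $\mathbb{Z}[\sqrt{d}]$ proof where Lagrange's theorem encodes positivity, sum-of-squares conditions carry no size information here. The paper's solution consists of two equations: first, $rx+t(8s+3)=1$ forces $x\neq 0$, since otherwise $8s+3$ would be a unit of $\mathbb{Z}[i]$, which is impossible because none of $\pm 1,\pm i$ has the form $8s+3$ with $s\in\mathbb{Z}[i]$; second, $x=4(3p^2+1)w$ then gives $|x|=4|3p^2+1|\,|w|>4|p|$, whence the conic yields $|y|\geq \tfrac{1}{4}|x|>|p|$ for any solution of the system. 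Without some such largeness trick the congruence $p=q+zy$ proves nothing, so this is a genuine missing idea in the proposal rather than mere bookkeeping; everything else in your outline is the paper's argument.
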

\begin{proof}
We show that $a\in\mathbb{Z}$ if and only if a system of seven equations 
(in fact six are enough, the first equation is just for elegance) has a solution in $\mathbb{Z}[i].$\newline
The equations are :
\begin{enumerate}[(1)]
\item $2a+1=p$
\item$rx+t(8s+3)=1$
\item$x=4(3p^2+1)w$
\item$p=q+zy$
\item $v=qy$
\item$u^2-4uv+v^2$
\item$x^2-4xy+y^2$
\end{enumerate}
First let $a\in\mathbb{Z}.$ By 1 we can find an odd integer $n>3$ so that $\alpha(n-1)\equiv 0 \pmod{4(3p^2+1)}.$ Set $x=\alpha(n-1), y=\alpha(n).$
By 2 this is a solution of (7).\newline
Choosing $s$ such that $(x,8s+3)=1$ , there is $r$ and $t$ so that (2).\newline Set $u=\alpha(pn+1)$ and $v=\alpha(pn) .$ By 2 
this is a solution of (6).  We have that $y|v$, so there is $q$ so that (5). $x\equiv 0 \pmod{4(3p^2+1)}$ , so there is $w$ as in (3).\newline

Now form 3, $v\equiv p\alpha(n-1)^{n-1}\alpha(n)\pmod{\alpha(n)^2} $ which implies that $q\equiv px^{n-1} \pmod {\alpha(n)}$ 
from which $q\equiv p \pmod{\alpha(n)}$ since from (7) we have $x^2\equiv 1 \pmod y$ and so there is $z$ as in (4).
\newline \newline It is nice to notice that the obtained values are all integers .
\newline \newline
Conversely, if our system of equations has a solution in $\mathbb{Z}[i]$, then $a$ is an integer. Because any solution of (6),(7) in Gaussian integers is real 
(by 2) and so $q\in\mathbb{Z}$ . Since (2) has a solution, $x$ must be nonzero and so is $w$ (by (3)). \newline We have,
$$|x|=4|3p^2+1||w|>4|p|,$$ $$|x|=4|3p^2+1||w|\geq4,$$ which together with (7) imply that
$$|y|=|2\pm\sqrt{3+x^{-2}}||x|\geq(2-\sqrt{3+\frac{1}{16}})|x|=\frac{1}{4}|x|>|p|.$$ So lastly, by 4, 
$p$ must be an integer and so is $a$.

\end{proof}

\section{Hilbert's Tenth Problem over $\mathbb{Q}$}

Now the question is: does the negative answer for Hilbert's Tenth Problem over $\mathbb{Z}$ imply a 
negative answer for the same problem over $\mathbb{Q}$ (as was the case with quadratic rings)? 
If there is a Diophantine definition of $\mathbb{Z}$ in $\mathbb{Q}$, 
then the answer to the question is yes, we can transfer the results of Hilbert's Tenth Problem over $\mathbb{Z}$ to $\mathbb{Q}$. 
To prove this, we can use contradiction. Suppose Hilbert's algorithm exists for $\mathbb{Q}$. Then to solve an equation over $\mathbb{Z}$, consider the same equation over $\mathbb{Q}$ with auxiliary equations that ensure that the rational variables take integer values. Then Hilbert's algorithm will tell us whether this equation 
over $\mathbb{Z}$ has solutions in $\mathbb{Z}$. But this is a contradiction because Hilbert's Tenth Problem is unsolvable over the ring of integers.
Unfortunately, there is not yet a proof of whether a Diophantine definition of $\mathbb{Z}$ over $\mathbb{Q}$ exists. 
Barry Mazur has made a number of conjectures which, if true, 
imply that this definition does not exist. Mazur's two main conjectures are stated here \cite{5}.

\begin{conjecture}[Mazur's First Conjecture]
Let V be any variety over $\mathbb{Q}$. 
Then the topological closure of $V(\mathbb{Q})$ in $V(\mathbb{R})$ has at most finitely many connected components.
\end{conjecture}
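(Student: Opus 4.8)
The plan is to be candid at the outset: this statement is \emph{Mazur's conjecture} on the topology of rational points, and it is open in general, so what follows is an attack strategy together with an honest account of where it stalls rather than a complete argument. The natural first move is dimensional reduction. One would try to fibre $V$ over a lower-dimensional base and control $V(\mathbb{Q})$ fibre by fibre, bootstrapping the general case from the theory of curves, where rational points are genuinely understood. So I would first dispose of the one-dimensional case and only then confront the passage to higher dimension.

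For a smooth projective curve $C$ over $\mathbb{Q}$ the conjecture can in fact be settled, via the trichotomy by genus. If the genus is $\geq 2$, Faltings' theorem gives that $C(\mathbb{Q})$ is finite, so its closure in $C(\mathbb{R})$ is a finite set and \emph{a fortiori} has finitely many components. If the genus is $0$, then either $C$ has no rational points, or it is isomorphic to $\mathbb{P}^1$, in which case $C(\mathbb{Q})$ is dense and its closure is all of $C(\mathbb{R})$, a real manifold with finitely many components. The genus $1$ case is the first substantive one: when $C(\mathbb{Q})$ is nonempty it is a finitely generated abelian group by the Mordell--Weil theorem, and one studies the closure of this group inside the compact real Lie group $C(\mathbb{R})$, which has at most two components. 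That closure is a closed subgroup, hence itself a compact Lie group, and therefore again has finitely many components. So on curves the conjecture is a theorem, assembled from Faltings and Mordell--Weil.

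The substantive difficulty is everything above dimension one. In the fibration step one needs, uniformly across the base, both finiteness-type control on fibres of high genus and an understanding of how the density loci in the rational or elliptic fibres glue together in the real topology, and neither is available by any general mechanism; indeed there is no known reduction of the full conjecture to curves. More fundamentally, for surfaces and higher-dimensional varieties of general type we do not even have an unconditional analogue of Faltings: the expected non-density of rational points is the content of the Bombieri--Lang conjecture, itself wide open. Thus \textbf{the main obstacle is that the conjecture presupposes exactly the control on the distribution of rational points on higher-dimensional varieties that current number theory cannot deliver}; a genuine proof would have either to prove a strong form of Bombieri--Lang or to find a topological argument that sidesteps the arithmetic entirely.

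It is worth recording the stakes, since they tie directly to the discussion above. A positive resolution is known to imply that $\mathbb{Z}$ is \emph{not} Diophantine in $\mathbb{Q}$: the set $\mathbb{Z}$ is discrete and infinite, so its closure in $\mathbb{R}$ has infinitely many connected components, which a Diophantine subset of $\mathbb{Q}$ could not have. This is precisely the route by which the conjecture bears on Hilbert's Tenth Problem over $\mathbb{Q}$, explaining why it obstructs the naive strategy of transferring unsolvability from $\mathbb{Z}$ to $\mathbb{Q}$ through a Diophantine definition of the integers.
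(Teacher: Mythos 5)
You are right not to offer a proof: the statement is Mazur's First Conjecture, which the paper itself presents as an unproved conjecture and explicitly says is ``still unresolved,'' so there is no proof in the paper to compare against, and any purported complete proof would have been wrong. Your partial results are sound. The curve case is assembled correctly (genus $0$ by density of $\mathbb{P}^1(\mathbb{Q})$, genus $1$ by Mordell--Weil plus the fact that a closed subgroup of the compact Lie group $C(\mathbb{R})$ has finitely many components, genus $\geq 2$ by Faltings), and your diagnosis that the higher-dimensional case runs into Bombieri--Lang-type obstructions is the standard and accurate assessment. Your final paragraph on the stakes corresponds exactly to the paper's own Proposition that the First Conjecture implies the Second (no Diophantine definition of $\mathbb{Z}$ over $\mathbb{Q}$); note only that your compressed phrasing glosses a step the paper makes explicit: Mazur's conjecture is about varieties, and a Diophantine subset of $\mathbb{Q}$ is a \emph{projection} of $V(\mathbb{Q})$, so one argues as the paper does, taking $X$ to be the solution set of $p(t,x)=0$ in affine $(1+n)$-space and observing that $\overline{X(\mathbb{Q})}$ has at least one connected component above each integer $t$, contradicting finiteness of components for $X$ itself. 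With that small repair your account agrees with the paper's treatment in full.
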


\begin{conjecture}[Mazur's Second Conjecture]
There is no Diophantine definition of $\mathbb{Z}$ over $\mathbb{Q}$.
\end{conjecture}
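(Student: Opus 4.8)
The plan is to prove the statement \emph{conditionally}, deriving it from Mazur's First Conjecture; an unconditional proof is out of reach, since the non-existence of a Diophantine definition of $\mathbb{Z}$ over $\mathbb{Q}$ is precisely the open problem flagged just above the conjecture. So I would assume Mazur's First Conjecture throughout and argue by contradiction. Suppose $\mathbb{Z}$ \emph{were} Diophantine over $\mathbb{Q}$, so that there is a polynomial $P(t,x_1,\dots,x_n)$ with rational coefficients for which
\[
t\in\mathbb{Z}\ \Longleftrightarrow\ (\exists\,\overline{x}\in\mathbb{Q}^n)\,[P(t,\overline{x})=0].
\]
Let $W\subseteq\mathbb{A}^{n+1}$ be the affine algebraic set cut out by $P=0$, and let $\pi:W\to\mathbb{A}^1$ be the projection onto the $t$-coordinate. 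By the defining equivalence, $\pi(W(\mathbb{Q}))=\mathbb{Z}$.

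Next I would pass to the real topology. The map $\pi$ is the restriction of a coordinate projection, hence continuous on $W(\mathbb{R})$, and $\mathbb{Z}$ is closed in $\mathbb{R}$, so $\pi(\overline{W(\mathbb{Q})})\subseteq\overline{\pi(W(\mathbb{Q}))}=\mathbb{Z}$; the reverse inclusion is trivial, giving $\pi(\overline{W(\mathbb{Q})})=\mathbb{Z}$, where the closure is taken in $W(\mathbb{R})$. Mazur's First Conjecture, applied to (each component of) $W$, asserts that $\overline{W(\mathbb{Q})}$ has only finitely many connected components $C_1,\dots,C_k$. Since $\pi$ is continuous, each image $\pi(C_i)$ is a connected subset of $\mathbb{R}$ contained in $\mathbb{Z}$; but a connected subset of $\mathbb{R}$ is an interval, and an interval lying inside $\mathbb{Z}$ can only be a single integer. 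Hence $\mathbb{Z}=\pi(\overline{W(\mathbb{Q})})=\bigcup_{i=1}^{k}\pi(C_i)$ would be a finite set, which is absurd. This contradiction rules out any such $P$.

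The main obstacle is not this topological reduction, which is elementary, but Mazur's First Conjecture itself, on which the entire argument rests and which remains unproven. It is a deep assertion about the distribution of rational points on varieties in the real topology, and it is known to be delicate: naive strengthenings fail, and even formulating a version robust enough to survive the passage to the reducible, non-projective affine set $W$ used above requires care. Thus the realistic target is the \emph{implication} ``Mazur I $\Rightarrow$ Mazur II,'' possibly supplemented by identifying weaker hypotheses (for instance controlling the number of connected components only by a uniform bound, or working adelically rather than over $\mathbb{R}$ alone) that still force the projection of the rational points to be non-discrete and so preclude a Diophantine definition of $\mathbb{Z}$.
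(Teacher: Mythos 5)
Your proposal is correct and matches the paper's treatment: the paper likewise does not claim an unconditional proof, but establishes exactly the implication you prove, in the Proposition immediately following the conjecture (``Mazur's First Conjecture implies Mazur's Second Conjecture''), by the same projection-and-connectedness argument. Your write-up is in fact somewhat more careful than the paper's, which only asserts that the closure of the rational points has a component above each integer without spelling out the continuity and connected-image steps you make explicit.
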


It can be proved that the second conjecture is an implication of the first as shown below. Both conjectures are still unresolved.

\begin{proposition}
Mazur's First Conjecture implies Mazur's Second Conjecture.
\end{proposition}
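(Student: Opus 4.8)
The plan is to argue by contradiction, deriving a violation of Mazur's First Conjecture from the existence of a Diophantine definition of $\mathbb{Z}$ over $\mathbb{Q}$. First I would unwind the hypothesis. By definition, a Diophantine definition of $\mathbb{Z}$ over $\mathbb{Q}$ is a polynomial $P(t,x_1,\dots,x_n)$ with rational coefficients such that for every $t\in\mathbb{Q}$,
$$t\in\mathbb{Z}\iff(\exists x_1,\dots,x_n\in\mathbb{Q})\;P(t,x_1,\dots,x_n)=0.$$
Let $V\subseteq\mathbb{A}^{n+1}$ be the affine variety cut out by $P=0$; it is defined over $\mathbb{Q}$, and by construction the coordinate projection $\pi(t,\overline{x})=t$ carries the set of rational points $V(\mathbb{Q})$ onto $\mathbb{Z}\subseteq\mathbb{Q}\subseteq\mathbb{R}$.

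Next I would pass to the real topology. Since $V(\mathbb{R})$ is the real zero locus of a polynomial it is closed in $\mathbb{R}^{n+1}$, so the closure of $V(\mathbb{Q})$ taken inside $V(\mathbb{R})$ agrees with its closure inside $\mathbb{R}^{n+1}$; write $C=\overline{V(\mathbb{Q})}$. The projection extends to a continuous map $\pi:V(\mathbb{R})\to\mathbb{R}$. For any continuous map one has $\pi(\overline{A})\subseteq\overline{\pi(A)}$, whence $\pi(C)\subseteq\overline{\pi(V(\mathbb{Q}))}=\overline{\mathbb{Z}}=\mathbb{Z}$, the last equality because $\mathbb{Z}$ is discrete and closed in $\mathbb{R}$. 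Conversely $\pi(C)\supseteq\pi(V(\mathbb{Q}))=\mathbb{Z}$, so in fact $\pi(C)=\mathbb{Z}$.

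Finally I would count connected components. Since $\pi$ is continuous, the image of each connected component of $C$ is a connected subset of $\mathbb{Z}$, and connected subsets of the discrete space $\mathbb{Z}$ are singletons. As the components partition $C$, its image $\pi(C)$ is the union of these singletons. Therefore, if $C$ had only finitely many connected components, then $\pi(C)$ would be a finite set of integers, contradicting $\pi(C)=\mathbb{Z}$. Hence $C=\overline{V(\mathbb{Q})}$ has infinitely many connected components, which is precisely the negation of Mazur's First Conjecture applied to $V$. This contradiction shows that no Diophantine definition of $\mathbb{Z}$ over $\mathbb{Q}$ can exist, establishing the proposition.

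The algebraic bookkeeping (exhibiting $P$ and identifying $\pi$ with the coordinate projection) is routine. The one step deserving genuine care, and the conceptual crux rather than a true obstacle, is the interchange of closure and projection, namely verifying $\pi(C)=\mathbb{Z}$ and not merely $\pi(C)\supseteq\mathbb{Z}$. This rests on the discreteness of $\mathbb{Z}$ in $\mathbb{R}$ (so that $\overline{\mathbb{Z}}=\mathbb{Z}$) together with the elementary inclusion $\pi(\overline{A})\subseteq\overline{\pi(A)}$; once these are secured, the connectedness argument closes the proof at once.
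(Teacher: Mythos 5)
Your proposal is correct and follows essentially the same route as the paper: contraposition, taking $V$ to be the variety cut out by the Diophantine definition of $\mathbb{Z}$, and observing that the closure of $V(\mathbb{Q})$ in $V(\mathbb{R})$ must have a separate connected component over each integer, hence infinitely many. In fact your write-up fills in the step the paper leaves implicit --- that continuity of the projection plus discreteness of $\mathbb{Z}$ in $\mathbb{R}$ forces each component to project to a single integer --- so it is a more careful rendering of the same argument.
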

\begin{proof}
We will prove this statement by contraposition. 
Suppose that $\mathbb{Z}$ has a Diophantine definition over $\mathbb{Q}$, 
this is the negation of the second conjecture. 
Then there exists a polynomial $p$ such that $$\mathbb{Z} = \{a \in \mathbb{Q}: (\exists x \in \mathbb{Q}^n) [p(a,x)=0]\}.$$ 
Let $X$ be the solution set of $p(t,x)=0$ for $t \in \mathbb{Z}$ in the affine space of dimension $1+n$. 
Then the topological closure of $X$ over $\mathbb{Q}$, $\overline{X(\mathbb{Q})}$, 
has at least one connected component above each $t \in \mathbb{Z}$. But this means that $\overline{X(\mathbb{Q})}$ 
has infinitely many connected components, which is the negation of the first conjecture. 
Thus, Mazur's first conjecture implies his second conjecture.
\end{proof}

\begin{definition}
Let $W_Q$ be a subset of the set of all primes $P$. Then the set $O_{Q,W_Q}:=\{x/y: x\in \mathbb{Z}, y\in <W_Q>\}.$
\end{definition}
We will now give a generalized form of Mazur's conjecture
\begin{conjecture}[Generalized Form of Mazur's Conjecture]
Let $W_Q \subset P$ where $P$ is the set of all primes. 
Let $V$ be a variety over $\mathbb{Q}$. Then $V(O_{Q,W_Q}) $ has finitely many connected components.
\end{conjecture}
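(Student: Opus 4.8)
The plan is to read the statement, in parallel with Mazur's First Conjecture above, as the assertion that the topological closure of $V(O_{Q,W_Q})$ inside the real points $V(\mathbb{R})$ has finitely many connected components, and to attack it by reduction to the case of curves. First I would reduce to $V$ affine and geometrically irreducible, covering $V$ by finitely many affine opens and decomposing into irreducible components; since only finitely many pieces arise, it suffices to bound the number of components of the closure of $V(O_{Q,W_Q})$ on each. I would also record at the outset the classical fact that $V(\mathbb{R})$, being a real algebraic set, already has only finitely many connected components, so that $\overline{V(O_{Q,W_Q})}$ is a closed subset of a space with finitely many components; the whole difficulty is to rule out that this closed subset \emph{fragments} into infinitely many isolated pieces.

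Next I would induct on $\dim V$, the substantive base case being $\dim V=1$, split according to genus. For a curve of genus $\geq 2$, Faltings' theorem gives that $V(\mathbb{Q})$ is finite, so $V(O_{Q,W_Q})\subseteq V(\mathbb{Q})$ is finite and its closure trivially has finitely many components; when $W_Q$ is finite the same finiteness follows from Siegel's theorem on $S$-integral points for affine curves of genus $\geq 1$. The genus $1$ case is the structurally illuminating one: $E(O_{Q,W_Q})$ is a subgroup of the finitely generated Mordell--Weil group $E(\mathbb{Q})$, and the closure of a finitely generated subgroup of the real Lie group $E(\mathbb{R})\cong\mathbb{R}/\mathbb{Z}$ (or $\mathbb{R}/\mathbb{Z}\times\mathbb{Z}/2\mathbb{Z}$) is a finite union of cosets of a closed subgroup, hence has finitely many components. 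For genus $0$ one parametrizes by $\mathbb{P}^1$ and checks that the closure of the image of $O_{Q,W_Q}$ under the parametrization has finitely many components, using that $O_{Q,W_Q}$ is dense in $\mathbb{R}$ whenever $W_Q\neq\emptyset$ and discrete when $W_Q=\emptyset$.

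For $\dim V\geq 2$ I would fibre $V$ over a base of smaller dimension and try to assemble the finitely many components of the generic fibre with those of the base. The hard part, and the reason this remains a conjecture rather than a theorem, is precisely this assembly: there is no unconditional control over how the $O_{Q,W_Q}$-points distribute among the fibres, so one cannot exclude that components accumulate along the base. Unconditionally this step appears to require inputs of Lang--Vojta type, namely non-Zariski-density of rational and $S$-integral points on varieties of general type, which are themselves open. I therefore expect the curve case to be provable along the lines above, while the higher-dimensional assembly is the genuine obstruction; for this reason I would present the statement as a conjecture refining Mazur's First Conjecture rather than claim a complete proof.
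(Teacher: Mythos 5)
There is a genuine gap, and it is fatal: you are trying to prove a statement that the paper introduces only in order to refute it. Immediately after stating the Generalized Form of Mazur's Conjecture, the paper says that this generalized form is \emph{not correct}, and records the counterexample (attributed to the literature, see Shlapentokh's book cited there): for any $\epsilon>0$ there exist a set of rational primes $W_Q$ of Dirichlet density greater than $1-\epsilon$ and a variety $V$ over $\mathbb{Q}$ such that the topological closure of $V(O_{Q,W_Q})$ in $\mathbb{R}$ has infinitely many connected components. So no proof strategy, however organized, can close your higher-dimensional ``assembly'' step --- the statement is false, and the interesting content is precisely that it fails even when $W_Q$ misses only a density-$\epsilon$ set of primes.

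Moreover, the part you claim as provable --- the curve case --- is already where the conjecture breaks. Take $W_Q=\emptyset$, so that $O_{Q,W_Q}=\mathbb{Z}$, and let $V$ be the genus-zero affine conic $x^2-2y^2=1$. Its integral points (the Pell solutions) are infinite and form a discrete subset of $V(\mathbb{R})$, so the closure of $V(O_{Q,W_Q})$ is this infinite discrete set itself, with infinitely many connected components; your genus-zero argument via the $\mathbb{P}^1$-parametrization overlooks that discreteness of $O_{Q,W_Q}$ produces \emph{infinitely} many components, not finitely many. Your genus-one step has a separate error: $E(O_{Q,W_Q})$ is in general \emph{not} a subgroup of the Mordell--Weil group $E(\mathbb{Q})$, since sums of $S$-integral points need not be $S$-integral, so the closed-subgroup structure theory of $E(\mathbb{R})$ does not apply to it. The correct reading of this passage of the paper is therefore the opposite of your last sentence: the statement is not a refinement of Mazur's First Conjecture to be defended, but a naive generalization whose failure (already visible for Pell conics over $\mathbb{Z}$, and nontrivially even for $W_Q$ of density arbitrarily close to $1$) delimits how far Mazur-type finiteness can be pushed to rings of $W$-integers.
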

However, the generalized form of Mazur's conjecture is not correct and we will give a counterexample. But first, we have to give a definition.
\begin{definition}
Let $W_Q$  be a subset of the set of all primes. Then we define Dirichlet density of $W_Q$ by \[\lim_{s \rightarrow 1^+}
\frac{\sum_{i\in \mathbb{N}}\frac{1}{p_i^s}}{|log(\frac{1}{1-s} )|}\] where $p_i\in A$.
\end{definition}
Now we will state the counterexample. However, we will not state the proof. The interested reader can consult \cite{5}.
\begin{proof}
For any $\epsilon >0$ there exist a set of rational primes $W_Q$ such that the Dirichlet density of $W_Q$ is greater than $1-\epsilon$ and there exist a variety V defined over $\mathbb{Q}$ such that the topological closure of $V_(O_{Q,W_Q})$ in $\mathbb{R}$ has infinitely many connected components.
\end{proof}
In his attempts to disprove Mazur's conjectures, 
Bjorn Poonen's results lead to a theorem, which carries his name. 
Poonen used the properties of elliptic curves to prove his theorem. Before stating this theorem, we will define two functions as:
\begin{align}
\pi(x) &= \text{The number of prime numbers } \leq x \nonumber \\
\pi_A(x) &= \text{The number of elements of A that are } \leq x. \nonumber
\end{align}
\begin{theorem}[Poonen's Theorem] \label{Poonen_th}
There is a computable set $A$ of prime numbers 
such that $$\lim_{x\rightarrow \infty} \frac{\pi_A(x)}{\pi(x)}=1$$ 
and there is a Diophantine definition of $\mathbb{Z}$ in $U$, where $U$ is the ring of all rational numbers whose denominators are products of primes in $A$. Thus Hilbert's Tenth Problem is unsolvable over the ring $U$.
\end{theorem}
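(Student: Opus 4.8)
The plan is to reduce the entire theorem to one hard ingredient: a Diophantine definition of $\mathbb{Z}$ inside the ring $U=\mathbb{Z}[1/p:p\in A]$. Once that is in hand, the unsolvability of Hilbert's Tenth Problem over $U$ follows by exactly the transfer argument already used in the quadratic case (Theorem \ref{theorem_3}) and sketched for $\mathbb{Q}$: given a Diophantine equation over $\mathbb{Z}$, one appends the defining system for $\mathbb{Z}$ to force all unknowns into $\mathbb{Z}$, and then a hypothetical decision procedure over $U$ would decide solvability over $\mathbb{Z}$, contradicting the unsolvability of $H10$ over $\mathbb{Z}$ established earlier via Theorem \ref{mainresult}. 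So the whole weight of the statement rests on two intertwined constructions: a \emph{computable} set of primes $A$ with $\lim_{x\to\infty}\pi_A(x)/\pi(x)=1$, and an existential formula over $U$ whose solution set is precisely $\mathbb{Z}$.

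The engine behind the definition is the arithmetic of an elliptic curve $E/\mathbb{Q}$ of rank exactly $1$; a concrete such curve can be written down and its rank verified effectively. Fixing a generator $g$ of $E(\mathbb{Q})$ modulo torsion, I would study the multiples $ng$. The denominators of their coordinates form an elliptic divisibility sequence, and the key local fact is that a prime $p$ divides this denominator precisely when $ng$ reduces to the identity modulo $p$, that is, when the order of $g$ in $E(\mathbb{F}_p)$ divides $n$. This is what lets the curve \emph{detect} integrality: an element of $U$ is forced into $\mathbb{Z}$ by imposing that the primes it inverts do not interfere with a suitable configuration of the points $ng$, a condition expressible existentially through the group law (itself given by polynomial formulas) together with norm and square conditions in the style of Lemmas \ref{lemma_1} and \ref{lemma_6}.

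First I would build $A$ by a block construction: partition the primes into consecutive intervals and, in each interval, throw into $A$ all but a sparse selection of primes, choosing the exceptions from the reduction behaviour of $g$ so that enough of the arithmetic of $E$ survives the localization. A Prime Number Theorem (or Chebotarev) density estimate then shows the discarded primes have relative density $0$, so that $\lim_{x\to\infty}\pi_A(x)/\pi(x)=1$; computability of $A$ follows because at each stage only finitely many effective conditions on $E\bmod p$ are tested. Second, I would assemble the Diophantine definition of $\mathbb{Z}$ over $U$ from the denominator criterion above, packaging the several polynomial conditions into a single equation by the ``sum of squares over a quadratic twist'' device of Theorem \ref{theorem_2}, so that one existentially quantified $Q=0$ cuts out $\mathbb{Z}$.

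The hard part will be the simultaneous balancing act, and this is precisely Poonen's innovation. Inverting a set of primes of relative density $1$ is exactly what destroys the classical norm-form definitions of $\mathbb{Z}$ in $\mathbb{Q}$, since those rely on quadratic obstructions at infinitely many primes that the localization wipes out. Elliptic curves supply the needed slack: the prime support of an elliptic divisibility sequence is sparse and its terms grow rapidly, so one can afford to invert almost all primes and still retain a thin, controllable set of ``test'' primes through which $g$ pins down $\mathbb{Z}$. Making the density estimate and the Diophantine encoding compatible --- ensuring that the primes one must \emph{keep} uninverted are themselves of density $0$ yet still rich enough for the group-law argument --- is the crux, and it is where the theory of heights, the reduction theory of elliptic curves, and the distribution of the order of $g$ in $E(\mathbb{F}_p)$ must all be brought to bear. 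The full details are in \cite{5}.
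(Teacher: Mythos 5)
First, a point of comparison: the paper does not prove this theorem at all. It is quoted as a known result of Poonen, with an attribution (``Poonen used the properties of elliptic curves to prove his theorem'') and a pointer to the literature (\cite{5}, \cite{8}), exactly like the counterexample to the generalized Mazur conjecture stated just before it. So there is no in-paper proof to match yours against, and your proposal, which ends by sending the reader to \cite{5} for ``the full details,'' is ultimately doing the same thing: citing rather than proving. What you do supply correctly names the headline ingredients of Poonen's actual argument --- a rank-one elliptic curve $E/\mathbb{Q}$ with generator $g$, the denominators of the multiples $ng$, the local criterion that a good prime $p$ divides the denominator of $x(ng)$ iff the order of $g$ in $E(\mathbb{F}_p)$ divides $n$, a density computation for $A$, and the transfer to undecidability in the style of Theorem \ref{theorem_3}.

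However, two specific claims in your sketch are wrong, and they sit exactly at the crux. First, ``the prime support of an elliptic divisibility sequence is sparse'' is false: for every prime $p$ of good reduction the point $g$ has finite order $N_p$ in the finite group $E(\mathbb{F}_p)$, so $N_p g$ lies in the kernel of reduction and $p$ divides the denominator of $x(N_p g)$; hence all but finitely many primes divide some term of the sequence. This is precisely what makes the theorem delicate: one cannot invert ``the support of the sequence'' and keep a density-zero complement, nor run your interval-by-interval construction on that basis. Poonen's construction instead chooses the density-one set $A$ so that only a very sparse recursive sequence of multiples $n_k g$ acquires coordinates in $U$, and controlling this requires the height growth of the sequence and careful prime-by-prime bookkeeping that your sketch gestures at but never carries out. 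Second, your plan to ``force an element of $U$ into $\mathbb{Z}$'' by existential conditions in the style of Lemmas \ref{lemma_1} and \ref{lemma_6} is not how the argument goes, and there is no indication it can be made to work after a density-one set of primes has been inverted (as you yourself observe, such localizations destroy the norm-form obstructions those lemmas rely on). What Poonen's argument actually produces is a Diophantine \emph{model} of $\mathbb{Z}$ over $U$ --- precisely the content of the paper's Lemma \ref{Poonen_l} --- and the undecidability of Hilbert's Tenth Problem over $U$ is then obtained by translating Diophantine equations through that model, not by appending a defining system for $\mathbb{Z}\subseteq U$ to them. So, as a proof, your text has a genuine gap: the two objects on which you say the whole statement rests, the set $A$ and the existential formula, are never constructed, and the hints you give for constructing them point in a direction that fails.
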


So we have that $U \subseteq \mathbb{Q} $ and if $A=$ the set of all primes, then Hilbert's Tenth Problem is unsolvable over $\mathbb{Q}$. Poonen also succeeded in proving the following Lemma, which states that there is a map $f$ from $\mathbb{N}$ into $U$ such that the images of the graphs of the additive and multiplicative functions in $\mathbb{N}$ under $f$ are Diophantine over $U$.

\begin{lemma} \label{Poonen_l}
There is a computable function $f: n \mapsto y_n$ from $\mathbb{N}$ into $U$ (where $U$ is the same as in Theorem \ref{Poonen_th}) such that the sets of triples,
\begin{align}
\{(y_a,y_b,y_{a+b}) &|a,b \in \mathbb{N}\} \; \text{ and } \nonumber \\
\{(y_a,y_b,y_{ab}) &|a,b \in \mathbb{N}\} \nonumber
\end{align}
are both Diophantine over $U$.
\end{lemma}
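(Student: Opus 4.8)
There is a computable function $f: n \mapsto y_n$ from $\mathbb{N}$ into $U$ such that the sets $\{(y_a,y_b,y_{a+b}) \mid a,b\in\mathbb{N}\}$ and $\{(y_a,y_b,y_{ab}) \mid a,b\in\mathbb{N}\}$ are both Diophantine over $U$.

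Let me think about how to prove this lemma.
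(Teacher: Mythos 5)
Your proposal contains no proof: after restating the lemma you write ``Let me think about how to prove this lemma'' and stop. There is no construction of the map $n \mapsto y_n$, no candidate polynomials witnessing that the two sets of triples are Diophantine over $U$, and no argument connecting the function to the ring $U$ of Theorem \ref{Poonen_th}. So the entire content of the lemma --- both the existence of a suitable computable sequence $(y_n)$ and the existential definability of the addition and multiplication triples --- is missing.

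For comparison, the paper itself does not reprove this result either; it states it as a known theorem of Poonen, and for good reason: the proof is genuinely deep. Poonen's construction takes an elliptic curve $E$ over $\mathbb{Q}$ of rank one with a generator $P$ of the Mordell--Weil group, defines $y_n$ from the coordinates of the multiples $nP$, and chooses the computable set $A$ of primes (hence the subring $U$) precisely so that these coordinates lie in $U$ and so that the relations needed to express $n \mapsto y_n$, $(a,b) \mapsto y_{a+b}$, and $(a,b) \mapsto y_{ab}$ become positive-existentially definable; the addition triples come essentially from the group law on $E$, while the multiplication triples require additional divisibility and denominator-control arguments. In effect the lemma asserts that $U$ admits a Diophantine model of $(\mathbb{N},+,\cdot)$, which is exactly what makes Hilbert's Tenth Problem undecidable over $U$. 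None of this machinery can be recovered from the empty attempt, so the proposal has a complete gap rather than a fixable flaw: if you want to go beyond citing Poonen, you must at minimum exhibit the curve, the sequence $(y_n)$, and the defining polynomials.
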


Now let $S=\{a \in \mathbb{N} |(\exists x \in \mathbb{N}^k)[q(a,x)=0]\},$ where q is a polynomial with integer coefficients, then $S$ is a Diophantine definition of some recursively enumerable set. By Poonen's Lemma, since polynomials are built up by addition and multiplication of variables and constants, there must be some polynomial $p$ with coefficients in $U$ such that, $$S=\{a \in \mathbb{N} | (\exists x \in U^m)[p(y_a,x)=0]\}.$$
Let $S_p = \{a \in \mathbb{N} | (\exists x \in \mathbb{Q}^m)[p(y_a,x)=0]\}$. Since $U \subseteq \mathbb{Q}$, we have that $S \subseteq S_p \subseteq \mathbb{N}$.

\begin{definition}
A recursively enumerable (or listable) set $S \subseteq \mathbb{N}$ is \emph{simple} if $\mathbb{N}-S$ is infinite and contains no infinite recursively enumerable subset.
\end{definition}

If we apply the definition to a simple set $S$ then we have one of the following cases:
\begin{itemize}
\item $N-S_p$ is finite.
\item $N-S_p$ is infinite. If this is the case then $N-S_p$ is not recursively enumerable and does not contain any infinite recursively enumerable subset, because $(N-S_p)\subseteq(N-S)$ and $N-S$ is by definition infinite and contains no infinite recursively enumerable subsets. Thus, $S_p$ must be simple.
\end{itemize}

Recently, Martin Davis proposed the following conjecture, which if true will imply that Hilbert's Tenth Problem is unsolvable over $\mathbb{Q}$.
\begin{conjecture}
There is a Diophantine definition of a simple set $S$ for which $N-S_p$ is infinite.
\end{conjecture}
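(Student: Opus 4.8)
The plan is to reduce the conjecture to producing a single recursively enumerable simple set $S$ whose ``Poonen pull-back'' $S_p$ stays co-infinite, and then to feed this into the machinery already assembled above. First I would exploit the freedom the statement grants in the choice of $S$: I would fix $S$ to be a recursively enumerable simple set, such sets existing by Post's classical construction, and since it is r.e.\ it is Diophantine over $\mathbb{Z}$ (recursively enumerable and Diophantine sets coincide), so $S$ is a legitimate Diophantine definition as required. This disposes of the ``simple $S$'' half of the conjecture for free, and concentrates the entire content of the statement in the behaviour of $S_p$.

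Next I would invoke Poonen's Lemma (Lemma \ref{Poonen_l}) to replace the defining equation $q(a,x)=0$ over $\mathbb{N}$ by a polynomial $p$ with coefficients in $U$ so that $a\in S\Leftrightarrow(\exists x\in U^m)[p(y_a,x)=0]$, and then form $S_p=\{a\in\mathbb{N}:(\exists x\in\mathbb{Q}^m)[p(y_a,x)=0]\}$. The inclusions $S\subseteq S_p\subseteq\mathbb{N}$ are immediate from $U\subseteq\mathbb{Q}$ and from the fact that the graphs encoded by $f$ already force $a\in\mathbb{N}$. At this point the dichotomy recorded just before the conjecture does the remaining logical work: if $\mathbb{N}-S_p$ is infinite, then $\mathbb{N}-S_p\subseteq\mathbb{N}-S$ contains no infinite r.e.\ subset (inherited from the simplicity of $S$), so $S_p$ is itself simple, hence r.e.\ but not recursive. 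Consequently a decision procedure for $p(y_a,x)=0$ over $\mathbb{Q}$, which is an instance of Hilbert's tenth problem over $\mathbb{Q}$, cannot exist. Thus the conjecture, if established, yields the unsolvability of $H10$ over $\mathbb{Q}$ exactly as Theorem \ref{Poonen_th} transfers it over $U$.

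The heart of the matter, and the step I expect to be the genuine obstacle, is proving that $\mathbb{N}-S_p$ is infinite, that is, that enlarging the search for solutions from $U^m$ to $\mathbb{Q}^m$ does not absorb all but finitely many non-members of $S$. Geometrically one must understand the fibre family $V_a=\{x:p(y_a,x)=0\}$ and, by a judicious choice of $S$ and of the representing polynomial $p$, exhibit infinitely many indices $a\notin S$ for which $V_a$ has \emph{no} rational point at all, not merely no point whose denominators are supported on $A$. This is precisely the delicate passage where Poonen's elliptic-curve technology must be pushed further: one would need to arrange $p$ so that, for an infinite set of $a$, the fibre $V_a$ carries a local obstruction at a prime outside $A$, or has Mordell--Weil rank forcing emptiness over $\mathbb{Q}$, uniformly in $a$. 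Controlling rational points in such a uniform infinite family is exactly what keeps the statement a conjecture rather than a theorem: the logical skeleton above is unconditional, but the number-theoretic input guaranteeing infinitely many $\mathbb{Q}$-empty fibres is not presently available.
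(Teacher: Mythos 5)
You have not proved the statement, and you should not expect to: in the paper this is precisely Martin Davis's \emph{conjecture}, stated without proof, and it remains open. There is therefore no proof of the paper's to compare yours against. What can be assessed is whether your write-up correctly locates the content of the conjecture, and it does. Your unconditional steps --- taking $S$ to be a Post simple set (r.e., hence Diophantine over $\mathbb{Z}$), invoking Lemma \ref{Poonen_l} to produce the polynomial $p$ over $U$ and the set $S_p$ with $S\subseteq S_p\subseteq\mathbb{N}$, and the dichotomy that either $\mathbb{N}-S_p$ is finite or else $S_p$ inherits simplicity from $S$ and is consequently r.e.\ but not recursive, making $H10$ over $\mathbb{Q}$ unsolvable --- are exactly the discussion the paper itself gives in the paragraphs preceding the conjecture, built on Theorem \ref{Poonen_th}. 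None of that is the conjecture; it is the motivation for it.

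The one step you flag as ``the genuine obstacle'' --- exhibiting a simple $S$ and a representing polynomial $p$ for which infinitely many $a\notin S$ have fibres $V_a=\{x: p(y_a,x)=0\}$ with no rational points whatsoever, not merely no points with denominators supported in $A$ --- is indeed the entire content of the statement, and your honesty in admitting that you cannot supply it is the correct conclusion. Two cautions, though. First, your phrasing ``disposes of the simple-$S$ half for free'' slightly misreads the quantifier structure: $S_p$ depends not just on $S$ but on the particular Diophantine definition chosen (the polynomial $q$ and the $p$ produced from it by Lemma \ref{Poonen_l}), so the conjecture asks for a coordinated choice of the pair $(S,p)$, and fixing $S$ first may throw away the freedom one actually needs. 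Second, your suggested strategies (uniform local obstructions at primes outside $A$, or Mordell--Weil rank forcing emptiness) run against the grain of Poonen's own construction, in which $A$ has prime density $1$, so the primes available for local obstructions form a density-zero set; any argument would have to thread that needle, which is a concrete reason --- beyond mere unavailability of techniques --- why this approach is hard. In short: your proposal is a faithful reconstruction of the paper's framing plus an accurate diagnosis of the open gap, but as a proof of the statement it has the unavoidable gap that the statement is open.
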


There are other approaches towards reaching a solution for Hilbert's Tenth Problem over $\mathbb{Q}$. 
Thanases Pheidas is attempting to construct a Diophantine model of $\mathbb{Z}$ with the aid of elliptic curves. 
While Cornelissen and Zahidi aim to update Julia Robinson's first order definability results.
Poonen has worked on many aspects of number theory, including the ``dark side" as he calls it. 
The dark side consists of results that show that something is undecidable. 
The problem is Hilbert's tenth for $\mathbb{Q}$ which is very simple to state, but seems to be very hard to prove. 
Hopefully, this problem can be solved
in the near future.
One must always recall the great Carl Gauss' quote: 
``I confess that Fermat's Theorem as an isolated proposition has very little interest for me, because I could easily lay down a multitude of such propositions, which one could neither prove nor dispose of. 
Number theory is filled with such innocent sounding problems that are impossible to resolve."
Perhaps Gauss was wrong about this particular example Fermat's Theorem turned out to be central to modern number theory,
but he is certainly right in general. Perhaps Fermat is ``the exception that proves the rule." 
An innocent open problem related to $H10$ for $\mathbb{Q}$ is: does there exist a polynomial map  $f(x,y)$ so that 
$f:\mathbb{Q}\times \mathbb{Q}\to \mathbb{Q}$
is a bijection? 
Harvey Friedman asked whether there exists a polynomial $f(x, y)$ in $Q[x, y]$ such that the induced map $\mathbb{Q} \times \mathbb{Q}\to \mathbb{Q}$ 
is injective. 
Heuristics suggest that most sufficiently complicated polynomials should do the trick. Don Zagier has speculated that a polynomial 
as simple as $x^7 + 3y^7$ might be an example. But it seems very difficult to prove that any polynomial works. 
Both Friedman's question and Zagier's speculation are at least a decade old, 
but surprisingly it seems that there has been essentially no progress on the question so far.
Poonen shows that a certain other, more widely-studied hypothesis implies that such a polynomial exists. 
Of course, such a polynomial does not exist if we replace $\mathbb{Q}$ by $\mathbb{R}$, the reals. In fact any injection 
$\mathbb{R}\times \mathbb{R}\to 
\mathbb{R}$ 
must be (very) discontinuous.
Suppose an injective polynomial $f$ could be identified, answering Friedman's question; 
it might then be interesting to look at `recovery procedures' to produce $x, y$ given $f(x, y)$, like done with the pairing function above. 
We can't possibly hope for $x, y$ to be determined as polynomials in $f(x, y)$, but maybe an explicit, 
fast-converging power series or some similar recipe could be found. 
Finally, all of this should be compared with the study of injective polynomials from the Euclidean plane to itself; 
this is the subject of the famous Jacobian Conjecture. See Dick Lipton's excellent post 
``http:rjlipton.wordpress.com/2010/07/17/an-amazing-paper" for more information.

We know that bijections exist between $\mathbb{Q}\times \mathbb{Q}$ and $\mathbb{Q}$ 
from George Cantor's famous work, since both  $\mathbb{Q}$ and  $\mathbb{Q}\times \mathbb{Q}$ are countable. 
The usual proof that  is countable arranges the rational numbers into a sequence of the form: 
$$0,1,-1,2,-2, 1/2,-1/2,3$$
The fractions $r/s$ are arranged based on the size of  $r$ and $s$. But, this sequence is quite messy, and is far from being definable by a polynomial. 
In a sense the shock of this question, let's call it the {\it Bijection Conjecture} $(BC)$, is that it is open at all. How can there be such a mapping? 
The method of Cantor clearly yields a very complicated function. How could a function from 
pairs of rationals map to single rationals and be a polynomial? Seems unlikely, but of course who knows. 
First raised by the famous logician Harvey Friedman, he also asked several related questions about other possible mappings. 
For instance, does there even exist a polynomial  that is one-to-one, let alone being bijective? 
This is believed for many polynomials such as $f(x,y)=x^7+ 3y^7$, but nobody has proved even one. 
These problems have enjoyed some recent interest, initiate by Poonen, \cite{rationals}. 
There are two simple consequences of the $BC$. The first shows that a bijection exists for any number of copies of $\mathbb{Q}$.
\begin{theorem}If the $BC$ is true, then for any $n\geq 2$ there is a bijection 
$f:\mathbb{Q}^n\to \mathbb{Q}$, where $f$ is a polynomial mapping
\end{theorem}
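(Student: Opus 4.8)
The plan is to argue by induction on $n$, using the $BC$ as both the base case and the engine of the inductive step. For $n=2$ there is nothing to prove: a polynomial bijection $f:\mathbb{Q}^2\to\mathbb{Q}$ is exactly what the $BC$ asserts. So it suffices to show that a polynomial bijection on $\mathbb{Q}^n$ can always be pushed up to one on $\mathbb{Q}^{n+1}$.

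Suppose, then, that $g:\mathbb{Q}^n\to\mathbb{Q}$ is a polynomial bijection, and let $b:\mathbb{Q}^2\to\mathbb{Q}$ be the polynomial bijection granted by the $BC$. Identifying $\mathbb{Q}^{n+1}$ with $\mathbb{Q}^n\times\mathbb{Q}$, I would define
$$h(x_1,\dots,x_n,x_{n+1})=b\bigl(g(x_1,\dots,x_n),\,x_{n+1}\bigr).$$
Concretely, $h$ is the composite $b\circ(g\times\mathrm{id}_{\mathbb{Q}})$, where $g\times\mathrm{id}_{\mathbb{Q}}:\mathbb{Q}^{n+1}\to\mathbb{Q}^2$ sends $(\bar{x},t)$ to $(g(\bar{x}),t)$.

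Two routine verifications then finish the induction. First, $h$ is a polynomial map: the map $g\times\mathrm{id}_{\mathbb{Q}}$ has polynomial coordinate functions (namely $g$ in the first coordinate and the projection $t$ in the second), and the class of polynomial maps is closed under composition, so $b\circ(g\times\mathrm{id}_{\mathbb{Q}})$ is again polynomial. Second, $h$ is a bijection: $g\times\mathrm{id}_{\mathbb{Q}}$ is a bijection because $g$ is a bijection and the identity on $\mathbb{Q}$ is a bijection, and $b$ is a bijection by hypothesis, so their composite is a bijection.

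There is, frankly, no serious obstacle here; the content of the statement is entirely carried by the $BC$ itself, and the role of the induction is merely bookkeeping. The only points that deserve a line of care are the two closure facts invoked above, namely that a coordinatewise polynomial map composed with a polynomial map is polynomial, and that the product of a bijection with an identity map is a bijection, both of which are immediate. One might also remark that the construction is completely explicit: unwinding the recursion expresses the $n$-ary bijection as an $(n-1)$-fold nested composition of the single binary bijection $b$ with itself, so the whole family is generated by the one polynomial whose existence $BC$ postulates.
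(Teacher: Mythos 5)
Your proof is correct. The paper in fact states this theorem without any proof, presenting it as a ``simple consequence'' of the $BC$; your induction --- taking $b:\mathbb{Q}^2\to\mathbb{Q}$ from the $BC$ and setting $h(x_1,\dots,x_n,x_{n+1})=b\bigl(g(x_1,\dots,x_n),x_{n+1}\bigr)$, i.e.\ nesting the binary bijection with itself --- is exactly the standard argument the paper leaves implicit, and both of your verification steps (polynomials are closed under composition; a composite of bijections is a bijection) are sound.
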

The second consequence concerns Hilbert's Tenth Problem and its potential generalization to rationals. 
One approach to proving that $H10$ is also undecidable for $\mathbb{Q}$ is to 
show that the integers are definable by a polynomial formula. 
This idea goes back to the early work of Julia Robinson, who proved in her Ph.D. thesis that the integers were 
definable in the first order theory of rationals. This proves that the first order theory of rationals is undecidable. 
The question is whether the restriction of that theory to formulas with existential quantifiers only, whose body is a single polynomial 
set equal to zero, is likewise undecidable. 
Robinson's undecidable formulas had several levels of alternating  $\forall$ and  $\exists$ quantifiers,
making them relatively complex. Over the years there have been improvements, and the best known is due to Poonen. 
Let's consider formulas of the form: a series of prenex quantifiers and then a polynomial equation. 
Let  $\Sigma$ denote those where all the prenex quantifiers are existential, and let  $\Pi$ where they all are universal quantifiers. 
We can mix these as usual; thus, $\Sigma\Pi\Sigma$
denotes first existential, then universal, and last existential. We will also allow, for example, 
$\Sigma\Pi^2\Sigma$
to denote formulas where the block of universal are just two quantifiers. 
In this notation, Poonen proves: 

\begin{theorem}There is a $\Pi^2\Sigma$  formula that defines the integers. 
\end{theorem}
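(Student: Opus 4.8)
The plan is to exhibit $\mathbb{Z}$ as the solution set over $\mathbb{Q}$ of a formula of the shape
\[
t \in \mathbb{Z} \iff (\forall a)(\forall b)(\exists x_1,\dots,x_m)\, \big[ P(t,a,b,x_1,\dots,x_m) = 0 \big],
\]
which is precisely a $\Pi^2\Sigma$ formula: a block of two universal quantifiers, then a block of existentials, then a single polynomial equation. The central number-theoretic tool will be the arithmetic of quaternion algebras over $\mathbb{Q}$ together with the Hasse--Minkowski local--global principle for quadratic forms. The idea is that the universal pair $(a,b)$ ranges over all quaternion algebras $\left(\frac{a,b}{\mathbb{Q}}\right)$, each of which ``sees'' the arithmetic of $\mathbb{Q}$ at a finite even set of places, so that by letting $(a,b)$ vary we can test the $p$-adic valuation of $t$ at every prime.

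First I would attach to each pair $a,b \in \mathbb{Q}^{\times}$ an existentially definable set $S_{a,b} \subseteq \mathbb{Q}$ built from the reduced norm form of the quaternion algebra $(a,b)$, namely the quaternary form $w^2 - a x^2 - b y^2 + ab z^2$. Membership $t \in S_{a,b}$, suitably normalised so as to record valuation data rather than bare representability, is expressed by an existential formula, since ``$t$ is a norm'' unwinds to $\exists w,x,y,z$ satisfying one polynomial equation. The whole construction then reduces to the single identity
\[
\bigcap_{(a,b)\in(\mathbb{Q}^{\times})^2} S_{a,b} = \mathbb{Z}.
\]
The inclusion $\mathbb{Z} \subseteq \bigcap_{a,b} S_{a,b}$ is the routine direction: one checks that a rational integer is represented in the required normalised sense by every such norm form, using that an integer has nonnegative valuation at every finite place.

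The hard inclusion, $\bigcap_{a,b} S_{a,b} \subseteq \mathbb{Z}$, is where the real work lies and is the step I expect to be the main obstacle. Given $t \in \mathbb{Q}$ with $v_p(t) < 0$ at some prime $p$, I must produce a pair $(a,b)$ witnessing $t \notin S_{a,b}$. The strategy is to choose $a,b$ so that the quaternion algebra $(a,b)$ is ramified exactly at $p$ and at one auxiliary place (ramification sets have even cardinality, so a second place is forced); over $\mathbb{Q}_p$ the algebra is then a division algebra, its norm form is anisotropic, and an explicit local computation of which elements of $\mathbb{Q}_p$ the normalised form represents shows that an element of negative $p$-adic valuation is excluded. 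Passing from this local obstruction back to a genuine failure of rational representability is exactly what Hasse--Minkowski supplies. The delicate points are the prime $p=2$ and the archimedean place, which must be treated by hand, and---most importantly---arranging the definition so that a \emph{single} universally quantified pair $(a,b)$, rather than an infinite conjunction, already suffices to force nonnegative valuation at every prime at once; this uniformity is what keeps the universal block down to two variables and yields a genuine $\Pi^2\Sigma$ formula.

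Finally I would assemble the pieces: the conditions constraining the parameters $a,b$, for instance that they be nonzero or lie in prescribed square classes, can be folded into the body of the existential matrix, using the standard fact that over $\mathbb{Q}$ a finite conjunction of polynomial equations together with an implication governed by nonvanishing conditions can be rewritten as a single polynomial equation preceded by existential quantifiers. Collecting the resulting formula and verifying that its quantifier prefix is $\forall\forall\exists\cdots\exists$ completes the proof that the integers are defined by a $\Pi^2\Sigma$ formula over $\mathbb{Q}$.
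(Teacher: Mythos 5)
First, a point of reference: the paper itself does not prove this theorem. It is stated as Poonen's result; the text merely displays the explicit formula $L$ (with the factor $\prod_{n=0}^{2309}(\cdots)$) and remarks that ``Poonen makes clever use of quaternions, and properties of various Diophantine equations.'' So your proposal must be measured against Poonen's actual argument, whose skeleton is visible in that displayed formula. Your sketch does identify the correct framework --- a $\forall a\,\forall b\,\exists\cdots\exists$ prefix, the quaternion norm form $x_1^2-ax_2^2-bx_3^2+abx_4^2$, the Hasse--Minkowski local--global principle, and the choice of an algebra ramified at a prime $p$ with $v_p(t)<0$ to refute non-integers --- and all of this matches Poonen's strategy and the shape of the displayed formula.

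However, there is a genuine gap, and it sits precisely at the step you call routine. For the natural set $S_{a,b}=\{2x_1 : \exists x_2,x_3,x_4\in\mathbb{Q},\ x_1^2-ax_2^2-bx_3^2+abx_4^2=1\}$ (traces of reduced-norm-one quaternions), Hasse--Minkowski shows that $t\in S_{a,b}$ if and only if at \emph{every} ramified place $t^2-4$ is not a nonzero square. Consequently an integer $t$ with $|t|>2$ is excluded from $S_{a,b}$ whenever some ramified prime $p$ makes $t^2-4$ a nonzero square in $\mathbb{Q}_p$ --- and such primes have positive density --- so $\mathbb{Z}\not\subseteq\bigcap_{a,b}S_{a,b}$; in fact that intersection meets $\mathbb{Z}$ only in $\{0,\pm 1,\pm 2\}$. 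Nonnegative valuation at every finite place is necessary but far from sufficient for representability: there is an additional splitting obstruction at the ramified primes which your ``normalisation'' does not address, and no renormalisation of a single norm form removes it. Poonen's fix is the technical heart of the theorem: replace $S_{a,b}$ by the sumset $T_{a,b}=S_{a,b}+S_{a,b}+\{0,1,\dots,2309\}$, prove (nontrivially, and uniformly in $(a,b)$) that $\mathbb{Z}\subseteq T_{a,b}$, and then that $\bigcap_{a,b}T_{a,b}=\mathbb{Z}$. This is exactly what the extra variables $y_2,\dots,y_4$ and the factor $\prod_{n=0}^{2309}\bigl((n-x-2x_1)^2-4ay_2^2-4by_3^2+4aby_4^2-4\bigr)^2$ in the paper's displayed formula encode, while the degenerate pairs (e.g.\ $a\leq 0$ or $b\leq 0$) are absorbed by the factor $(a+\sum x_i^2)(b+\sum x_i^2)$, much as you anticipate. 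Without the sumset-plus-bounded-shift device, the formula you describe defines a set far smaller than $\mathbb{Z}$, so the proposal as written does not go through.
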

Thus, formulas of the above are undecidable.
But these are still complicated.
We will just show what these  formulas look like.
Consider the following definitions: 
$$L=(a+x_1^2+x_2^2+x_3^2 +x_4^2)(b +x_1^2+ x_2^2+x_3^2+x_4^2)\Delta$$
where 
$$\Delta=(x_1^2-ax_2^2-bx_3^2 +abx_4^2-1) +\prod_{n=0}^{2309}\left((n-x-2x_1)^2-4ay_2-4b y_3^2 +4aby_4^2.4\right)^2$$
Define  $\psi(x)$ to be 
$$\forall a \forall b\exists x_1\ldots \exists x_4\exists y_2\ldots \exists y_4 (L=0)$$
Then Poonen's main theorem is: 
\begin{theorem} For all rational, the formula  $\psi(x)$ is true if and only if  $x$ is an integer.
\end{theorem}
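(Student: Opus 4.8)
The plan is to prove the biconditional by peeling off the quantifier blocks of $\psi(x)$ one at a time, using the arithmetic of the quaternion algebras $H_{a,b}$ whose reduced norm form is $N_{a,b}(w_1,w_2,w_3,w_4)=w_1^2-aw_2^2-bw_3^2+abw_4^2$. Since $\psi(x)=\forall a\,\forall b\,\exists x_1,\dots,x_4,y_2,y_3,y_4\,(L=0)$ is a $\forall^2\exists^7$ statement, showing that $\psi(x)$ holds for $x\in\mathbb{Z}$ amounts to producing, for every pair $(a,b)\in(\mathbb{Q}^\times)^2$, an explicit rational witness making $L=0$; showing that $\psi(x)$ fails for $x\notin\mathbb{Z}$ amounts to exhibiting a single ``bad'' pair $(a,b)$ for which no rational witness exists. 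The engine throughout is the Hasse--Minkowski local--global principle for quadratic forms together with the surjectivity of the local reduced-norm map of a quaternion algebra.

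First I would dispose of all pairs with $a\le 0$ or $b\le 0$. The factor $a+x_1^2+x_2^2+x_3^2+x_4^2$ vanishes for some rational $x_i$ exactly when $-a$ is a sum of four rational squares, which, by Lagrange's theorem over $\mathbb{Q}$ (clear denominators and apply Lemma \ref{lemma_2}), happens precisely when $a\le 0$; the same holds for $b$ with the second factor. Hence whenever $a\le 0$ or $b\le 0$ one may force $L=0$ through one of these two factors and the inner formula is satisfied vacuously. The genuine content of $\psi(x)$ therefore lives entirely in the range $a>0,\ b>0$, where both factors are strictly positive and $L=0$ forces $\Delta=0$; note that here $N_{a,b}$ has signature $(2,2)$ over $\mathbb{R}$, so $H_{a,b}$ is split at the archimedean place and any rational obstruction must originate at the finite places.

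Next I would unwind $\Delta=0$. The vanishing of $\Delta$ is arranged as a sum of squares of two pieces, so that $\Delta=0$ encodes their simultaneous vanishing: that $(x_1,\dots,x_4)$ be a reduced-norm-one element (a unit) of $H_{a,b}$, so that $2x_1$ is its reduced trace, and that the product $\prod_{n=0}^{2309}N_{a,b}(n-x-2x_1,\,2y_2,\,2y_3,\,2y_4)$ vanish, which for a division algebra forces $n-x-2x_1=0$ for some $n$ in the complete residue system $\{0,1,\dots,2309\}$ modulo $2310=2\cdot 3\cdot 5\cdot 7\cdot 11$. The key number-theoretic step is then a local computation: by Hasse--Minkowski a value is represented by the rank-$4$ form $N_{a,b}$ over $\mathbb{Q}$ iff it is represented over every $\mathbb{Q}_v$, and since the local reduced-norm map is onto $\mathbb{Q}_v^\times$ at every finite place, the solvability of the $\Delta=0$ system reduces to Hilbert-symbol conditions $(a,b)_p$ at the finite places. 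Letting $(a,b)$ range over all positive pairs, these conditions cut out $\bigcap_p\{t:v_p(t)\ge 0\}=\mathbb{Z}$, while the congruence product supplies the integrality information at the small primes $2,3,5,7,11$ that the bare norm-form conditions cannot uniformly capture.

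The main obstacle, and the step I expect to absorb essentially all of the work, is precisely this local analysis: one must show, prime by prime, that the family of systems indexed by $(a,b)$ together with the mod-$2310$ product detects exactly the condition $v_p(x)\ge 0$ for every $p$. Concretely, for each $t\in\mathbb{Q}$ with $v_p(t)<0$ I would construct a quaternion algebra ramified at a controlled finite set of places containing $p$ (equivalently a pair $(a,b)$ with prescribed Hilbert symbols) for which the norm-representation-and-trace system has no rational solution, and conversely I would verify solvability for every integer by producing explicit units of $H_{a,b}$ of the required trace. The delicate points are the prime $p=2$, where the $2$-adic Hilbert symbol interacts with the factor $2x_1$, and the uniform treatment of $2,3,5,7,11$ through the single product over a complete residue system modulo $2310$; keeping the quantifier complexity pinned at $\forall^2\exists^7$ throughout is exactly what makes the result delicate, and for the detailed verifications I would follow Poonen's explicit computations in \cite{rationals}.
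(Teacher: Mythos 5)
Be aware first that the paper itself contains no proof of this theorem: it is quoted as Poonen's result, with the single remark that ``Poonen makes clever use of quaternions, and properties of various Diophantine equations'', and the displayed definition of $\Delta$ is corrupted by typographical errors (the first summand should be squared, $4ay_2$ should be $4ay_2^2$, and the trailing ``$.4$'' should be read as ``$-4$''). So the only meaningful comparison is with Poonen's argument in \cite{rationals}, which your sketch follows in outline: the disposal of pairs with $a\le 0$ or $b\le 0$ via Lagrange's theorem over $\mathbb{Q}$ is correct, as is the identification of the norm form of $H_{a,b}$, the observation that only those pairs $(a,b)$ for which $H_{a,b}$ is a division algebra impose any condition, and the role of $2310=2\cdot3\cdot5\cdot7\cdot11$.

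The genuine gap is that you resolved the typo ``$.4$'' the wrong way: you deleted the constant instead of restoring $-4$, and this changes the set that the formula defines. In Poonen's formula the product factors are $\bigl((n-x-2x_1)^2-4ay_2^2-4by_3^2+4aby_4^2-4\bigr)^2$, so their vanishing does not force $n-x-2x_1=0$; it says that $(n-x-2x_1)/2$ is the first coordinate of a norm-one element of $H_{a,b}$, i.e.\ that $n-x-2x_1$ lies in the set $S_{a,b}$ of reduced traces of norm-one elements --- the same set that $2x_1$ belongs to by the other summand of $\Delta$. Hence for division $H_{a,b}$ the inner existential formula defines $x\in\{0,\dots,2309\}+S_{a,b}+S_{a,b}$ (using $S_{a,b}=-S_{a,b}$), with \emph{two} copies of the trace set, while your reading yields only $\{0,\dots,2309\}+S_{a,b}$, with one. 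Losing a copy is fatal to the direction ``$x\in\mathbb{Z}\Rightarrow\psi(x)$'': an integer $s$ lies in $S_{a,b}$ only if $s^2-4$ is a non-square in $\mathbb{Q}_p$ at every ramified prime $p$ (or $s=\pm2$), and once the shift $n$ has been spent modulo $2310$ on the primes $2,3,5,7,11$, your one-copy version has no freedom left to arrange this at a ramified prime $p>11$; nothing prevents all $2310$ numbers $(x-n)^2-4$, $0\le n\le 2309$, from being nonzero squares in $\mathbb{Q}_p$. Poonen's two-copy structure is exactly what circumvents this: one proves a local lemma that, after the small-prime shift, every $p$-adic integer is a sum of two local traces, and then weak approximation, together with Hasse--Minkowski applied to the \emph{ternary} form obtained by fixing the trace (not to the quaternary norm form, which is universal and therefore carries no information), glues the local data into rational witnesses. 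To repair your sketch you must restore the $-4$, define $S_{a,b}$, prove its local characterization, prove that $\{0,\dots,2309\}+S_{a,b}+S_{a,b}$ equals the set of rationals that are $p$-integral at every ramified prime when $a,b>0$, and then intersect over all positive pairs $(a,b)$, noting that every prime ramifies in some such $H_{a,b}$.
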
 
Poonen makes clever use of quaternions, and properties of various Diophantine equations. 
There should be other simple consequences of this strong assumption. 
Can we find other applications? 
The ability to map  tuples of rationals to one rational, 
by a polynomial map, should have other applications and repercussions in computational complexity.

The definition of Diaphantine sets naturally identifies these sets as number theoretic objects. 
Matiyasevisch's theorem tells us that these sets also belong to recursion theory. However, 
one can consider Diaphantine equations as sets definable in the language of rings 
by positive existential formulas and thus a subject of Model Theory. 
Finally Diaphantine sets are also projections of algebraic sets 
and consequently belong to algebraic geometry. In \cite{5} a pronounced bias towards 
number theoretic view of the problem is displayed, 
but some forays in geometry are made through the results of Mazur and Poonen.

Strikingly, there are many problems reductions of which to the $10$  problem for $\mathbb{Z}$ are not difficult but just much less evident. 
We start from the famous Fermat's Last Theorem . Hilbert did not include explicitly this 
problem in his List of Problems. Formally the problem is about unsolvabiltey of an infinite sequence of Diophantine equation
$$x^n+y^n=z^n$$
and thus it is not a case of the Hilbert's 10th problem in which Hilbert asks for solving only one single 
Diophantine equation rather than an infinitely many.
Fermat's equation is Diaphontine in $x,y,z$ for a fixed value of $n$ but is an exponential Diaphontine equation if viewed 
as an equation in four unknowns $n,x,y,z$. 
But, by the work of Matiysavech we know how to transfer an arbitary exponential Diophantine equation into genuine Diaphontine equation with 
extra varibales.
We can construct a particular polynomial $F$ with integer coefficients such that equation
$$F(n,x,t,z, u_1\ldots u_m)=0$$ has a solution in $u_1,\ldots u_m$ if and only if $n,x,y,z$ are solutions. 
So Fermat's last theorem is equivalent to the stateent that a particular Diaphontine equation has no solution in non negative integers. 
Thus a positive solution of the 10 poblem in its original formulation give us a tool to prove or disprove Fermat's Last Theorem. 
So while Fermat's last theorem is not explicitly among Hibert's problems it is presented there implicitly as a very particluar case of $H10$.
In spite of the fact that such such a reduction of Fermat's last theorem to a fixed Diaphantine equation was not known before 1970, is not too striking .
As a less evident example we can consider another famous problem Goldbach's Conjecture which was included by 
Hilbert into the 8th problem and still remains open.
Goldbach's Conjecture is very simple to formuate; it states that every integer greater than $4$ is the sum of two primes. For a particular number 
$a$ we can check whether it is a counterexample to Goldbach's conjecture or not. Thus the list $C$ of counterexamples
is listable and hence Diophantine . Respectively, we can find a particular Diaphantine equation
$$G(a,x_1,\ldots x_n)=0$$
which has a solution if and only if $a$ spoils the conjecture. In other words Goldbach's Conjecture is equivalent to the statement that the 
set $C$ is empty and hence to the Diophantine equation
$$G(x_0,\ldots x_m)$$
has no solution at all.
Thus we see again that positive solutions to the Tenth Problem in its original form allows us to know that Goldbach's Conjecture is true or not.
The reduction of Goldbach's conjecture to particular Diaphantine equations 
is less evident and requires more techniques than the reduction of Fermat's last theorem because now we have to deal with primality. 
Still it is believable because Golbach's conjecture is after all about integers.
Besides Goldbach's Conjecture, Hilbert included into the 8 problem another outstanding conjecture: the famous Reimann Hypothesis. 
In its original formulation it is a statement about
complex zeros of Reimann's zeta function which is analytic and has the form $$\xi(z)=\sum 1/n^s$$
which converger for $R(z)>1$. Nevertheless we can also construct a particular Diaophontine equation 
$$R(x_1,\ldots x_n)$$
which has no solution if and only if the Reimann Hypothesis is true. 
Such a reduction requires either the use of the theory of complex numbers or the use of the fact
that the Reimann hypothesis can be reformulated as a statement about the distribution of prime numbers.
Thus once again we see that an outstanding mathematical problem is a specific case of Hilbert's tenth.

The above three statements i.e Fermat's Last Theorem, Goldbach's conjecture, and the Reimann Hypothesis 
were about numbers and so their reduction to Diophantine equations is imaginable. 
However, in mathematical logic the powerful tool of arithmetizaton (used by G\"odel to code statements by numbers, making his system introspective)
allows one to reduce to numbers 
many problems which are not about numbers at all.
As an example consider yet another famous challenge to mathematics 
is the Four Color Conjecture which since 1976 is a theorem of K Apel and W, Haken. 
This is a problem about
colouring planar maps but again we can construct a Diaphantine equation 
$$C(x_1,\ldots x_m)$$
which has no solution  if and only if the Four Color problem is true. 
Again a problem which was not included by Hilbert into the Problem, 
appears in a masked form in the 10th probem.

Now two of the above problems are now solved, the two others remain open. The reductions of these problems may 
be considered as striking, stunning, amazing but could these reductions be useful at all ? Hilbert's tenth is undecidable
so we do not have a universal method to solve all these problems at once.
We cannot solve any of these problems by looking at particular Diophantine Equation  because they are even more complicated.

But quoting Matiyasevich : ``We can reverse the order of things. The 10th  problem is undecidable and we need to 
invent more and more adhoc methods to solve more and more Diophantine equations. 
Now we can view the proof of Fermat Last theorem and the four color conjecture as 
very deep tools for treating particular Diaphantine equations and we may extend such techniques to other Diophantine equations."

We should emphasize that Hilbert's tenth for $\mathbb{Q}$ is a central problem in main stream mathematics.  
It is related to sophisticated logic machinery through undecidability and recursion theory, to number theory via elliptic curves, 
to theory of definability in model theory 
which is taking a new sophisticated geometric twist
by the recent work of Hroshovski relating notions in stability theory (like Morely rank) 
to geometric notions in abelian varieties,
solving the famous Mordell Lang conjecture \cite{h}.
There is an interesting phenomenon arising here. The Mordell Lang conjecture is a conjecture that comes from algebraic geometry. 
It was solved by advanced techniques originally invented by Sharon Shelah in model theory. On the 
other hand Hilbert's tenth for $\mathbb{Q}$ is a problem that comes from mathematical logic, but it seems likely that it will be solved by advanced 
algebraic geometry machinery. One, then, naturally wonders whether Huroshovski's techniques are applicable in the ``Hilbert tenth context",
both basically dealing with Diaphantine geometry.

\end{document}